\documentclass[11pt]{amsart}

\usepackage{amsmath}
\usepackage{amsfonts}
\usepackage{amssymb}
\usepackage{amsthm}
\usepackage{graphics}
\usepackage{amscd}
\usepackage{graphicx,epsfig}
\usepackage{color}
\usepackage[all]{xy}
\usepackage{url}




\DeclareMathOperator{\Div}{div}

\DeclareMathOperator{\Ric}{Ric}

\renewcommand{\epsilon}{\varepsilon}

\newcommand{\boD}{\mathcal{D}}

\newcommand{\boB}{\mathcal{B}}

\newcommand{\boS}{\mathcal{S}}
\newcommand{\boP}{\mathcal{P}}

\newcommand{\boK}{\mathcal{K}}

\newcommand{\R}{\mathbb{R}}

\newcommand{\C}{\mathbb{C}}

\renewcommand{\H}{\mathbb{H}}
\renewcommand{\S}{\mathbb{S}}

\newcommand{\la}{\langle}
\newcommand{\ra}{\rangle}
\newcommand{\eps}{\varepsilon}

\newcommand{\Ome}{\Omega}

\newtheorem{thm}{Theorem}
\newtheorem{defn}[thm]{Definition}
\newtheorem{cor}[thm]{Corollary}
\newtheorem{prop}[thm]{Proposition}
\newtheorem{lem}[thm]{Lemma}
\newcommand{\inter}[1]{\overset{\circ}{#1}}
\newcommand{\barre}[1]{\overline{#1}}
\renewcommand{\phi}{\varphi}

\newtheorem*{thm*}{Theorem}

\newtheorem*{claim*}{Claim}

\theoremstyle{remark}

\newtheorem{remarq}{Remark}
\newtheorem*{rem*}{Remark}

\newcounter{remark}

\newcounter{case}

\newcounter{construction}

\newcounter{fact}

\newcounter{step}

\synctex=1

\newcommand{\bft}{\mathbf t}

\title[Minimal graphs and harmonic diffeomorphisms]{Minimal graphs over Riemannian
surfaces and harmonic diffeomorphisms}
\author{Laurent Mazet}
\address{Universit\'e Paris-Est, Laboratoire d'Analyse et de Math\'ematiques Appliqu\'ees (UMR 8050), UPEM, UPEC, CNRS, F-94010, Cr\'eteil, France}
\email{laurent.mazet@math.cnrs.fr}

\author{Magdalena Rodr\'iguez}
\address{Departamento de Geometra y Topologa, Universidad de Granada, Fuentenueva, 18071, Granada, Spain}
\email{magdarp@ugr.es}
\thanks{Research partially supported by the MCyT-FEDER research project MTM2014-52368-P}
          
\author{Harold Rosenberg}
\address{Instituto Nacional de Matematica Pura e Aplicada. Estrada Dona Castorina 110, Rio de Janeiro 22460-320. Brazil}
\email{rosen@impa.br }

\begin{document}
\maketitle

\begin{abstract}
We construct a parabolic entire minimal graph $S$ over a finite topology complete Riemannian surface $\Sigma$ of curvature $-1$ and infinite area (thus of non-parabolic conformal type). The vertical projection of this graph yields a harmonic diffeomorphism from $S$ onto $\Sigma$. The proof uses the theory of divergence lines to construct minimal graphs.

We also generalize a theorem of R.~Schoen. Let $g_1$ and $g_2$ be two complete metrics on a orientable surface $S$ with compact boundary and suppose 
$$\int_{S_r^2}K_{g_2}^-d\sigma_{g_2}\le C\ln(2+r)$$
for some $C>0$ and all $r>0$. If there is a harmonic diffeomorphism from $(S,g_1)$ to $(S,g_2)$, then $(S,g_1)$ is parabolic.
\end{abstract}


\section{Introduction}
Perhaps Bernstein proved the first global theorem concerning minimal
graphs: An entire minimal graph over the euclidean plane $\R^2$ is a
plane. 
This has had a great influence on minimal surface theory and partial
differential equations.  Among the many different proofs of
Bernstein's theorem that followed, that of Heinz~\cite{Hei} used harmonic
diffeomorphisms.  He proved there is no harmonic diffeomorphism from
the disk $\{ x^2 + y^2 < 1 \}$ onto $\R^2$ with the Euclidean metric.
The vertical projection of a minimal graph over a Riemannian manifold
is a harmonic diffeomorphism onto its image.  Thus Heinz concluded
that an entire minimal graph over $\R^2$ is necessarily conformally
the complex plane $\C$.  The Gauss map of the graph then defines a
holomorphic bounded function on $\C$, hence is constant, and the graph is
a plane.

Thus the existence of minimal graphs is intimately related to the
existence (or non existence) of harmonic diffeomorphisms.  Until the
last decade, the theory of minimal graphs over surfaces and harmonic
diffeomorphisms between surfaces developed considerably, yet
independently.  Before discussing some of these developments, we state
our main results.

In this paper we will construct an entire minimal parabolic graph
$\Sigma$ over any complete Riemannian surface $M$ of sectional curvature
$-1$, finite topology, and infinite area.  Thus we obtain a harmonic diffeomorphism of
$\Sigma$ onto $M$ (Theorem~\ref{th:harm}).  Parabolic here means the annular ends of $\Sigma$ are
conformally $\{z\in\C \ |\ 1\le |z|\}$, and infinite area implies
$M$ has at least one hyperbolic end: conformally $\{ 1 \leq |z| < c\}$ where $c<+\infty$.

In~\cite{Sch3}, Schoen proved that there is no harmonic diffeomorphism from
the unit disk onto a complete surface of non negative curvature; this is a generalization of Heinz result. We improve Schoen's result.  Let $S$ be an orientable surface with a compact
boundary. Let $g_1$ and $g_2$ be two complete Riemannian metrics on
$S$. Assume that there is a constant $C\geq 0$ such that, for any $r$,
\[
\int_{S_r^2}K_{g_2}^-d\sigma_{g_2}\le C\ln(2+r)
\]
where $K_{g_2}^-=\max\{0,-K_{g_2}\}$ and $S_r^2=\{p\in
S|d_{g_2}(p,\partial S))<r\}$.  If there is a harmonic diffeomorphism
$u :(S,g_1)\to (S,g_2)$, then $(S,g_1)$ is parabolic.

Let us now come back to the historical background of our work.

\subsubsection*{Minimal Graphs.}
Almost a century before Bernstein proved his theorem, Scherk
constructed many interesting minimal graphs over domains in $\R^2$.
The best known example is the graph of $\ln\frac{\cos(x)}{\cos(y)}$,
over the square $(-\pi/2, \pi/2)\times(-\pi/2, \pi/2)$ in $\R^2$,
taking $+\infty$ and $-\infty$ values over opposite sides of the
square.  The graph is bounded by the four vertical lines over the
vertices of the square and extends to a complete doubly periodic
minimal surface in $\R^3$ by successive rotations by $\pi$ about the
vertical lines.

Jenkins and Serrin~\cite{JeSe} found necessary and sufficient geometric conditions
on compact domains in $\R^2$ bounded by piecewise smooth arcs, to find minimal
graphs over the domain taking prescribed boundary values (perhaps
infinite) on the boundary arcs.

There have been many generalizations of their theorem to domains in
Riemannian surfaces.  Of interest to us here is the theorem of Pascal
Collin and the last author~\cite{CoRo}, extending the Jenkins-Serrin theory to
ideal domains of the hyperbolic plane $\H^2$.  They then used this to
construct an entire minimal graph over $\H^2$ in $\H^2\times\R$,
conformally the complex plane $\C$. Hence a harmonic diffeomorphism
from $\C$ onto $\H^2$.  This solved in the negative a conjecture of
Schoen and Yau: there is no harmonic diffeomorphism from $\C$ onto
$\H^2$.  We mention further generalizations of this
theorem~\cite{MaRoRo,LeRo, GaRo}.

To extend the Jenkins-Serrin theorem to higher topology Riemann
surfaces, a new idea was needed.  We solve this problem in this paper
with an idea introduced in the thesis of the first author~\cite{Maz1}: divergence
lines of sequences of minimal graphs.

\subsubsection*{Harmonic maps.}  Harmonic maps between surfaces have
long been used to study the Teichm\"uller space of a Riemann
surface. Sampson, Eells, Hartman were among the early
pioneers. They showed in \cite{EeSa} the existence of a harmonic map in
each homotopy class of maps from $M$ to $N$, when $N$ has non positive
sectional curvatures, and Hartman~\cite{Har} proved it is unique when
the curvature is strictly negative.

For closed hyperbolic surfaces of the same genus, Schoen and Yau~\cite{ScYa}
proved there is a unique harmonic diffeomorphism between them
homotopic to the identity.  Wolf, in~\cite{Wol2}, was able to
parametrize Teichm\"uller space by harmonic diffeomorphisms and
described the geometry of its closure (and other analytic properties)
in terms of the measured foliations of the Hopf differential of the
harmonic diffeomorphism.

Markovic~\cite{Mrk} extended this theory to non compact Riemann surfaces of
finite analytic type (conformally parabolic and finite topology).  He
studied the complex structures of such surfaces using quasi conformal
harmonic diffeomorphisms.

Harmonic diffeomorphisms from $\C$ into domains of $\H^2$ have been
analytically constructed by Au, Tam and Wan~\cite{AuTaWa}, Han, Tam,
Treibergs and Wan~\cite{HaTaTrWa} and Tam and Wan~\cite{TaWa}.  They showed
the image of the harmonic map is an ideal polygon of $\H^2$ with $m+2$
vertices, precisely when the Hopf differential is a polynomial of
degree $m$.

M. Wolf~\cite{Wol} realized that harmonic maps into $\H^2$ lead to minimal graphs
and multigraphs over domains in $\H^2$.  He made a construction of
such surfaces using harmonic maps to real trees and measured
foliations.  This gave many interesting examples.  He showed how the
measured foliations give information on the growth of the minimal
surfaces.

The main question of our study is to understand if the conformal types of two surfaces are
related if there is a harmonic diffeomorphism from one to the other. We finish this introduction by
stating a conjecture in that direction. There is no harmonic diffeomorphism from the disk
onto $\R^2$ with a complete parabolic metric.

The paper is structured as follows. In the second section, we recall some basic
definitions about conformal type, topology and geometry of surfaces.
Section~\ref{sec:nonexist} is devoted to the proof of a non existence result for harmonic
diffeomorphism. In Section~\ref{sec:mse}, we gather some results about the minimal surface
equation that are used in the next section to prove a Jenkins-Serrin type result. This
result is then used in Section~\ref{sec:construc} to construct a harmonic diffeomorphism
from a parabolic surface to a hyperbolic surface with infinite area.


\section{Preliminaries}

In this section we recall some basic facts about conformal type of surfaces, harmonic
maps, the geometry of hyperbolic surfaces.


\subsection{Conformal type}

We refer to \cite{Gri} for the notions introduced here. First we recall the following
definition.

\begin{defn}
Let $(M,g)$ be a complete Riemannian manifold with empty boundary. $(M,g)$ is called
parabolic if any bounded subharmonic function on $M$ is constant.

If $\partial M$ is compact, $(M,g)$ is called parabolic if any non negative bounded
subharmonic function which vanishes on $\partial M$ vanishes everywhere.
\end{defn}

If $(M,g)$ has no boundary and $K\subset M$ is a compact with smooth boundary, it is well
known that $M$ is parabolic if and only if $M\setminus \inter K$ is parabolic.

In dimension $2$, the parabolicity is a conformal property. For example, an annulus is
parabolic if and only if its conformal modulus is $+\infty$: we recall that any annular
domain with one connected compact boundary is conformal to $\{1\le z<c\}$,
$c\in(1,\infty]$, the conformal modulus of this annular domain in $\frac1{2\pi}\ln(c)$.


\subsection{Harmonic maps}

A harmonic map $\phi: (M_1,g_1)\to (M_2,g_2)$ between two Riemannian manifolds is a
critical point of the Dirichlet energy functional
$E(\phi)=\frac12\int_{M_1}|\phi^*g_2|_{g_1}^2d\sigma_{g_1}$ where $d\sigma_{g_1}$ is the volume
measure. If $M_1$ has dimension $2$, this
energy is conformally invariant, so being harmonic only depends on the conformal structure
of $M_1$. 

If $M_1$ and $M_2$ are surfaces, let us consider conformal parameters $z$ and $w$ on $M_1$
and $M_2$ and write their metrics as $g_1=\lambda^2(z)|dz|^2$ and $g_2=\sigma^2(w)|dw|^2$. Then a
map $\phi: M_1\to M_2$ can be written as a function $w=u(z)$. With these notations,
the map $\phi$ is harmonic if and only if $u$ satisfies the following partial differential
equation
$$
0=u_{z\bar z}+2\frac{\sigma_w(u)}{\sigma(u)}u_zu_{\bar z}
$$

Let $S$ be a surface and $M$ a Riemannian $3$-manifold. An isometric immersion $\phi :S\to
M$ is harmonic if and only if $\phi$ is minimal. In the case $M$ is a Riemannian
product $M=\Sigma\times\R$ and $\pi:M\to \Sigma$ denotes the vertical projection, 
$\phi$ minimal implies that the map $\pi\circ \phi: S\to \Sigma$ is harmonic.


\subsection{Hyperbolic surfaces}

In this paper we will look at orientable surfaces $\Sigma$ with finite topology and endowed with a
complete hyperbolic metric. Let us describe the geometry of the annular ends $E$ of $\Sigma$.

If $E$ has finite area then outside some compact $E$ is isometric to the quotient of a
horodisk $H$ by a parabolic translation leaving $H$ invariant. The quotient of a horodisk
$H'$ contained in $H$ is called a horoannulus of the end. Such an end can be compactified
by adding one point $p$ at infinity. These annular ends are parabolic. We call these ends hyperbolic cusp ends.

If $E$ has infinite area, the geometry is the following. First, we have two particular cases:
\begin{itemize}
\item $\Sigma$ could be $\H^2$ and $E$ is just the outside of  a compact subset of $\H^2$ or
\item $\Sigma$ could be the quotient of $\H^2$ by a parabolic translation and $E$ is just
the quotient of the outside of a horodisk left invariant by the parabolic translation.
\end{itemize}
If we are not in these two particular case, the picture is the following. Let $\gamma$ be
a geodesic in $\H^2$. If $c$ is an equidistant curve $c$ to $\gamma$, we denote $C_c$ the
non-convex component of $\H^2\setminus c$. Then there is an equidistant curve $c$ and a
hyperbolic translation $T$ leaving $\gamma$ invariant such that, outside some compact, $E$
is isometric to the quotient of $C_c$ by $T$. Thus, when we will consider such an end $E$,
we will see $E$ as the particular subdomain isometric to the quotient of $C_c$ by $T$, for
example $\partial E$ has constant
curvature. The ideal compactification of $\H^2$ passes to the quotient and gives a
compactification of $E$ by adding a circle. These annular ends are non-parabolic. In the
following, we will focus on this
general case. So any non parabolic end will be seen as the quotient of some $C_c$ by $T$
(the other cases can be treated similarly but are exactly the cases studied in \cite{CoRo,LeRo}).

Since each end can be compactified, the whole surface $\Sigma$ can be compactified by
$\barre\Sigma^\infty$. We will denote $\partial_\infty \Sigma=\barre\Sigma^\infty\setminus
\Sigma$. $\partial_\infty\Sigma$ is made of one point for each parabolic end and one
circle for each non-parabolic end. If $A$ is a subset of $\Sigma$, $\barre A^\infty$
denotes the closure of $A$ in $\barre\Sigma^\infty$ and $\partial_\infty A$ is $\barre
A^\infty\cap \partial_\infty\Sigma$ .


\section{A non-existence result}\label{sec:nonexist}

Our first result is a characterization "\`a la Huber" of parabolicity. For a Riemannian
metric $g$ on a surface we denote by $K_g$ the sectional curvature and $d\sigma_g$ the
area measure.

\begin{prop}\label{prop:huber}
Let $(S,g)$ be a complete Riemannian surface with a compact boundary. We denote $S_r=\{p\in
S|d_g(p,\partial S)<r\}$ . We assume that there is $C>0$ such that for any $r>0$
$$
\int_{S_r}K_gd\sigma_g\ge -C\ln(2+r).
$$
Then $(S,g)$ is parabolic.
\end{prop}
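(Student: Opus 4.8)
The plan is to derive parabolicity of $(S,g)$ from a growth estimate on the lengths of the level sets of the distance to $\partial S$, which is then fed into the classical length--area (capacity) criterion. Set $\rho(p)=d_g(p,\partial S)$; since $\partial S$ is compact and $(S,g)$ is complete, $\rho$ is a proper $1$-Lipschitz exhaustion with $|\nabla\rho|=1$ a.e., so $\overline{S_r}=\{\rho\le r\}$ is compact. We may assume $S$ connected and non-compact (a compact surface is trivially parabolic), so that $\Gamma_r:=\{\rho=r\}$ is a non-empty compact set for every $r\ge 0$; write $L(r)$ for its length. The argument has two steps: \textbf{(A)} if $\int^{\infty}dr/L(r)=+\infty$ then $(S,g)$ is parabolic; and \textbf{(B)} the curvature hypothesis forces $L(r)\le C_3(2+r)\ln(2+r)$ for large $r$, which makes the integral in (A) diverge (it is comparable to $\ln\ln r$).

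For \textbf{(A)}, let $f$ be a bounded non-negative subharmonic function on $S$ with $f|_{\partial S}=0$; after rescaling, $0\le f\le 1$. For a.e.\ $r$, Green's identity on $S_r$, together with $f|_{\partial S}=0$ and $f\Delta f\ge 0$, gives
\[
D(r):=\int_{S_r}|\nabla f|^2\,d\sigma_g\ \le\ \int_{\Gamma_r}f\,\partial_\nu f\,ds\ \le\ \int_{\Gamma_r}|\nabla f|\,ds\ \le\ L(r)^{1/2}\Big(\int_{\Gamma_r}|\nabla f|^2\,ds\Big)^{1/2}.
\]
By the coarea formula (using $|\nabla\rho|=1$ a.e.), $D$ is absolutely continuous with $D'(r)=\int_{\Gamma_r}|\nabla f|^2\,ds$ a.e., so $D(r)^2\le L(r)D'(r)$, i.e.\ $-\frac{d}{dr}\big(1/D(r)\big)\ge 1/L(r)$ wherever $D>0$. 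If $D(r_0)>0$ for some $r_0$, then (since $D$ is non-decreasing) integrating this inequality from $r_0$ contradicts $\int^{\infty}dr/L(r)=+\infty$. Hence $D\equiv 0$, so $\nabla f\equiv 0$, and $f\equiv 0$ because it vanishes on $\partial S$.

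For \textbf{(B)}, I would use the theory of geodesic parallel coordinates in the large (Fiala, Hartman): $L$ is locally Lipschitz, and for a.e.\ $r$,
\[
L'(r)\ \le\ 2\pi\chi(S_r)-\int_{\partial S}k_g\,ds-\int_{S_r}K_g\,d\sigma_g
\]
(an equality in model cases such as $\R^2\setminus B_1$ or $\H^2\setminus B_a$, where there is no cut locus). Since $\overline{S_r}$ is a compact surface with non-empty boundary, $\chi(S_r)\le 1$; combining this with the hypothesis $-\int_{S_r}K_g\,d\sigma_g\le C\ln(2+r)$ yields $L'(r)\le c_1+C\ln(2+r)$, where $c_1=2\pi-\int_{\partial S}k_g\,ds$ depends only on $g$ and $\partial S$. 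Integrating from a fixed $r_0$ gives $L(r)\le L(r_0)+\int_{r_0}^{r}\big(c_1+C\ln(2+t)\big)\,dt\le C_3(2+r)\ln(2+r)$ for $r\ge r_0$, whence
\[
\int_{r_0}^{\infty}\frac{dr}{L(r)}\ \ge\ \frac{1}{C_3}\int_{r_0}^{\infty}\frac{dr}{(2+r)\ln(2+r)}\ =\ +\infty,
\]
and step (A) finishes the proof.

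The one genuinely delicate point is the inequality for $L'(r)$ in step (B), and in particular its validity across the cut locus of $\partial S$: one must know that $L$ is locally Lipschitz and that the (generically finitely many) points where $\Gamma_r$ meets the cut locus contribute to Gauss--Bonnet with exterior angles of a sign that permits discarding them in the bound for $L'(r)$. This is classical, due to Fiala and Hartman, and I would cite it rather than reprove it. Everything else --- Green's identity and the coarea formula on regular sublevels, the Sard-type genericity of $r$, and the absolute continuity of $D$ and $L$ --- is routine.
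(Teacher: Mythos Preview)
Your proof is correct and shares its geometric core with the paper's argument: both bound the derivative of the level-set length $L(r)$ via the Fiala--Hartman/Gauss--Bonnet inequality
\[
L'(r)\ \le\ 2\pi\chi(S_r)-\int_{S_r}K_g\,d\sigma_g+\text{(a fixed boundary term)},
\]
which, under the curvature hypothesis, yields $L'(r)\le c_1+C\ln(2+r)$ (the paper cites \cite{Cas} for this, while you cite Fiala--Hartman). The divergence comes in the last step. The paper integrates once more, obtaining the area bound $|S_r|\lesssim r^2\ln(2+r)$ from the coarea formula, and then invokes the volume-growth criterion $\int^\infty r/|S_r|\,dr=+\infty\Rightarrow$ parabolic, quoted from Grigor'yan~\cite{Gri}. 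You instead stop at the length bound $L(r)\lesssim r\ln(2+r)$ and use the Ahlfors-type criterion $\int^\infty dr/L(r)=+\infty\Rightarrow$ parabolic, which you prove directly via the energy argument in your step~(A). Your route is thus a bit more self-contained (no black-box parabolicity criterion), at the cost of working out the capacity estimate by hand; the paper's route is shorter but relies on the cited area criterion. Either way the ``delicate point'' you flag---the validity of the $L'$ inequality across the cut locus---is exactly what the paper also takes on faith from the literature.
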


\begin{proof}
We are going to give an upper-bound for the growth of the area $|S_r|$ of $S_r$. Let
$\ell(r)$ denote the length of $\partial S_r\setminus \partial S$. It is known that $\ell$ is
differentiable almost everywhere and $\ell(b)-\ell(a)\le \int_a^b\ell'(u)du$ and
$\ell'(u)\le 2\pi\chi(S_u)-\boK(u)+\int_{\partial S}\kappa_g ds$ where
$\boK(u)=\int_{S_u}K_gd\sigma_g$ and $\int_{\partial S}\kappa_g ds$ is the integral of
the curvature of $\partial S$ computed with respect to the outward unit normal
(see \cite{Cas} and the references therein). We denote by $c$ this last integral. From the
coarea formula, we have:
\begin{align*}
|S_r|&=\int_0^r \ell(u)du\\
&\le r\ell(0)+\int_0^r\int_0^u\ell'(v)dvdu\\
&\le r\ell(0)+\int_0^r\ell'(v)\int_v^rdudv\\
&\le r\ell(0)+\int_0^r(r-v)\ell'(v)dv\\
&\le r\ell(0)+\int_0^r(r-v)(2\pi\chi(S_v)+c-\boK(v))dv\\
&\le r\ell(0)+\int_0^r(r-v)(2\pi+c+C\ln(2+v))dv\\
&\le r\ell(0)+(2\pi+c)\frac{r^2}2+Cr\int_0^r\ln(2+v)dv\\
&\le r\ell(0)+(2\pi+c)\frac{r^2}2+Cr^2\ln(2+r)
\end{align*}
So $\frac r{|S_r|}$ is not integrable at $+\infty$ which implies $(S,g)$ is parabolic (see \cite{Gri}).
\end{proof}

In \cite{Sch3}, Schoen proved that there is no harmonic diffeomorphism from the unit disk
onto a complete surface of non negative curvature. The following proposition is an
improvement of this result.

\begin{prop}
Let $S$ be an orientable surface with a compact boundary. Let $g_1$ and $g_2$ be two complete
Riemannian metrics on $S$. Assume that there is a constant $C\ge 0$ such that for any $r$
$$\int_{S_r^2}K_{g_2}^-d\sigma_{g_2}\le C\ln(2+r)$$
where $K_{g_2}^-=\max(0,-K_{g_2})$ and $S_r^2=\{p\in S|d_{g_2}(p,\partial S)<r\}$.

If there is a harmonic diffeomorphism $\phi :(S,g_1)\to (S,g_2)$ then $(S,g_1)$ is
parabolic.
\end{prop}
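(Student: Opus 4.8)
The plan is to reduce the statement to Proposition~\ref{prop:huber} applied to the metric $g_1$. The key is a pointwise comparison, going back to Schoen--Yau, between the curvature of $g_1$ and the pullback under $\phi$ of geometric data of $g_2$. Write $g_1=\lambda^2|dz|^2$ and $g_2=\sigma^2|dw|^2$ in local conformal coordinates, with $\phi$ given by $w=u(z)$ harmonic. The Hopf differential $\Phi=\sigma^2(u)u_z\bar u_{\bar z}\,dz^2$ is holomorphic; set $H=|\partial u|^2=\sigma^2(u)|u_z|^2/\lambda^2$ and $L=|\bar\partial u|^2=\sigma^2(u)|u_{\bar z}|^2/\lambda^2$, so that the Jacobian is $\lambda^2(H-L)>0$ (as $\phi$ is an orientation-preserving diffeomorphism, after possibly changing orientation) and the energy density is $H+L$. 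The Bochner formula for harmonic maps between surfaces gives, where $H\neq L$,
\[
\Delta_{g_1}\log(H-L)=-2K_{g_2}(H-L)+2K_{g_1},
\]
hence $K_{g_1}=\tfrac12\Delta_{g_1}\log(H-L)+K_{g_2}(H-L)$.

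Next I would integrate this identity over $S_r$, where now $S_r=\{d_{g_1}(\cdot,\partial S)<r\}$. The term $K_{g_2}(H-L)\,d\sigma_{g_1}$ is exactly $K_{g_2}\circ\phi$ times the Jacobian times $d\sigma_{g_1}$, i.e. it is $\phi^*(K_{g_2}\,d\sigma_{g_2})$; since $\phi$ is a diffeomorphism, $\int_{S_r}K_{g_2}(H-L)\,d\sigma_{g_1}=\int_{\phi(S_r)}K_{g_2}\,d\sigma_{g_2}\ge -\int_{\phi(S_r)}K_{g_2}^-\,d\sigma_{g_2}$. Now $\phi(S_r)$ need not be a metric ball for $g_2$, but because $\phi$ is a diffeomorphism fixing (a neighborhood of) $\partial S$ set-theoretically and $S_r$ exhausts $S$, one checks that $\phi(S_r)\subset S^2_{\rho(r)}$ for some function $\rho$; the only thing one needs is that $\rho(r)$ does not grow faster than exponentially, so that $\ln(2+\rho(r))\le C'\ln(2+r)+C''$. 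This is where one uses that $\phi$ (and $\phi^{-1}$) are continuous up to the compact boundary — a crude distance distortion bound suffices, and in fact since we only want a logarithmic bound on the curvature integral, even a polynomial-in-$r$ control of $\rho$ is enough. For the divergence term, $\tfrac12\int_{S_r}\Delta_{g_1}\log(H-L)\,d\sigma_{g_1}=\tfrac12\int_{\partial S_r}\partial_\nu\log(H-L)\,ds$; this boundary term must be controlled, and one invokes (as in Schoen's argument) that $\log(H-L)$ is subharmonic away from its zeros together with a mean-value / monotonicity argument to obtain an upper bound linear in $r$ (or, more carefully, one integrates $\ell'(u)$ as in the proof of Proposition~\ref{prop:huber} and absorbs this term into the $2\pi\chi$ contribution). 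Assembling these pieces yields
\[
\int_{S_r}K_{g_1}\,d\sigma_{g_1}\ge -C_1\ln(2+r)-C_2 r,
\]
and then a small refinement: one re-runs the area estimate of Proposition~\ref{prop:huber} directly, where the linear-in-$r$ defect from the boundary term only contributes an extra $r^2$ and hence still gives $r/|S_r|$ non-integrable, so $(S,g_1)$ is parabolic.

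The main obstacle is the divergence term $\int_{\partial S_r}\partial_\nu\log(H-L)\,ds$: $\log(H-L)$ tends to $-\infty$ at the zeros of the Hopf differential, so it is not globally smooth, and its normal derivative on the level sets $\partial S_r$ needs a genuine estimate rather than a one-line bound. The clean way around this is to avoid pointwise integration of the Bochner identity and instead argue as Schoen does: use subharmonicity of $\log(H-L)$ (valid across the zeros in the distributional sense, since the zeros of a holomorphic differential are isolated and the singularity of $\log$ is integrable) to get that $H-L$ has controlled growth, feed this into the curvature integral, and conclude via Proposition~\ref{prop:huber}. A secondary, more bookkeeping obstacle is the comparison $\phi(S_r)\subset S^2_{\rho(r)}$ with $\rho$ mildly controlled; here one uses completeness of $g_1$, that $\partial S$ is compact, and continuity of $\phi$ up to the boundary, together with the fact that the hypothesis is assumed for \emph{all} $r$, so any fixed re-parametrization $r\mapsto\rho(r)$ with at most polynomial growth is harmless.
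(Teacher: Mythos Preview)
Your approach has two genuine gaps, both of which you flag but do not actually close.

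First, the boundary term $\tfrac12\int_{\partial S_r}\partial_\nu\log(H-L)\,ds$. You need a \emph{lower} bound on it, growing at worst like $\ln r$. Subharmonicity of $\log(H-L)$ is not available without a sign assumption on $K_{g_2}$ (your Bochner identity shows $\Delta_{g_1}\log(H-L)$ involves both curvatures with no definite sign), and in any case subharmonicity would bound the flux from above, not from below. Your fallback (``the linear-in-$r$ defect only contributes an extra $r^2$'') is also off: if $-\boK(v)$ were allowed to grow like $Cv$ in the proof of Proposition~\ref{prop:huber}, the double integral produces a term of order $r^3$, so $r/|S_r|$ becomes integrable and parabolicity does not follow. (Incidentally, $H-L=J(\phi)>0$ everywhere since $\phi$ is a diffeomorphism; the zeros of the Hopf differential are the zeros of $HL$, not of $H-L$, so that particular worry is misplaced.)

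Second, the inclusion $\phi(S_r^1)\subset S^2_{\rho(r)}$ with $\rho$ of polynomial growth is simply not available. A harmonic diffeomorphism between complete surfaces can distort distances by arbitrarily large factors far from the boundary; compactness of $\partial S$ and continuity up to $\partial S$ give no control at infinity.

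The paper sidesteps both problems with a single device: replace $g_1$ by the conformal metric $\hat g=\sigma^2(u)\,|u_z|^2\,|dz|^2$. A direct computation gives $K_{\hat g}=K_{g_2}(u)\,\hat J(\phi)$, where $\hat J(\phi)$ is the Jacobian of $\phi:(S,\hat g)\to(S,g_2)$; hence $K_{\hat g}^-\,d\sigma_{\hat g}=\phi^*\!\big(K_{g_2}^-\,d\sigma_{g_2}\big)$ exactly, with no Laplacian or boundary term. Moreover the pointwise comparison $\phi^*g_2\le 4\hat g$ shows both that $\hat g$ is complete and that $S_r^{\hat g}\subset S_{2r}^{\phi^*g_2}$, so $\rho(r)=2r$ for free. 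Then $\int_{S_r^{\hat g}}K_{\hat g}^-\,d\sigma_{\hat g}\le C\ln(2+2r)$, Proposition~\ref{prop:huber} makes $(S,\hat g)$ parabolic, and conformal invariance of parabolicity in dimension $2$ transfers this to $(S,g_1)$. The missing idea in your attempt is precisely this auxiliary metric $\hat g$.
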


\begin{proof}
By changing the orientation on $(S,g_1)$, we can assume that $\phi$ preserves the orientation.
Let $z$ be a local conformal complex coordinate on $(S,g_1)$ and $w$ be a local conformal
complex coordinate on $(S,g_2)$. We denote $g_1=\lambda^2(z)|dz|^2$,
$g_2=\sigma^2(w)|dw|^2$ and $\phi$ by $w=u(z)$. The Jacobian of the map $u$ is then
$J(u)=\frac{\sigma^2(f)}{\lambda^2(z)}(|u_z|^2-|u_{\bar z}|^2)$ (see \cite{ScYa,Sch3}) since $u$ preserves the
orientation $J(u)>0$ and $|u_z|>|u_{\bar z}|$.

We then define on $S$ the metric $\hat g=\sigma^2(u)|u_z|^2|dz|^2$; this metric is
conformal to $g_1$ and does not depend on the choice of the complex coordinates $z$ and
$w$. If we compare $\hat g$ with the pull-back metric $\phi^* g_2$ we have
\begin{align*}
\phi^*g_2=\sigma^2(u)|u_z dz+u_{\bar z}d\bar z|^2
&\le\sigma^2(u)(|u_z| |dz|+|u_{\bar z}| |d\bar z|)^2\\
&\le \sigma^2(u)2(|u_z|^2+|u_{\bar z}|^2) |d z|^2\\
&\le \sigma^2(u)4|u_z|^2 |d z|^2\\
&\le 4\hat g
\end{align*}
Since $\phi^*g_2$ is complete, $\hat g$ is complete. Let 
\begin{gather*}
S_r^{\wedge}=\{p\in S\,|d_{\hat g}(p,\partial S)\le r\}\\
S_r^*=\{p\in S\,|d_{\phi^* g_2}(p,\partial S)\le r\}
\end{gather*}
Since $\phi^*g_2\le 4\hat g$, we have $S_r^\wedge\subset S_{2r}^*$.

The computation of the curvature of $\hat g$ (see \cite{ScYa,Sch3}) gives
$$
K_{\hat g}=\frac{-1}{2\sigma^2(u)|u_z|^2}\Delta \ln(\sigma(u)^2|u_z|^2)=
K_{g_2}(u)\frac{\sigma^2(u)(|u_z|^2-|u_{\bar z}|^2)}{\sigma^2(u)|u_z|^2}=K_{g_2}\hat J(\phi)
$$
where $\hat J(\phi)$ is the jacobian of $\phi:(S,\hat g)\to(S,g_2)$.
Thus
\begin{align*}
\int_{S_r^\wedge}K_{\hat g}^-da_{\hat g}
&=\int_{S_r^\wedge}K_{g_2}^-(\phi)\hat J(\phi) d\sigma_{\hat g}\\
&=\int_{\phi(S_r^\wedge)}K_{g_2}^-d\sigma_{g_2}\\
&=\int_{S_r^\wedge}K_{\phi^*g_2}^-d\sigma_{\phi^*g_2}\\
&\le\int_{S_{2r}^*}K_{\phi^*g_2}^-d\sigma_{\phi^*g_2}\le C\ln(2r+2)
\end{align*}
So, by Proposition~\ref{prop:huber}, $(S,\hat g)$ is parabolic. Since $\hat g$ is
conformal to $g_1$, $(S,g_1)$ is parabolic.
\end{proof}

As a consequence we have the following corollary

\begin{cor}\label{cor:cusp}
Let $g_1$ and $g_2$ be two complete metrics on an orientable finite topology surface $S$
with $g_2$  hyperbolic and of finite area. If there is a harmonic diffeomorphism $\phi :
(S,g_1)\to (S,g_2)$ then $(S,g_1)$ is parabolic.

There is no harmonic diffeomorphism from $A(r)=\{z\in\C\,|1\le |z|<r\}$ to a hyperbolic cusp end (we recall that a hyperbolic cusp end is parabolic).
\end{cor}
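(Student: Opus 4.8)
The plan is to derive Corollary~\ref{cor:cusp} directly from the preceding proposition by verifying its curvature hypothesis in the two stated cases. First, for the general statement: a complete hyperbolic metric $g_2$ of finite area on a finite topology surface has constant curvature $K_{g_2}\equiv-1$, so $K_{g_2}^-\equiv 1$, and the integral $\int_{S_r^2}K_{g_2}^-d\sigma_{g_2}$ is simply the area $|S_r^2|$ of the $r$-neighborhood of $\partial S$. Since the ends are hyperbolic cusps, each isometric outside a compact set to the quotient of a horodisk by a parabolic translation, one checks that the area of the part of a cusp within distance $r$ of a fixed curve grows at most linearly in $r$ — indeed in the upper half-plane model a horoannulus $\{y\ge y_0\}/(z\mapsto z+1)$ has area $\int_{y_0}^{\infty}\frac{dy}{y^2}<\infty$ on the whole end, so in fact $|S_r^2|$ is bounded, certainly $\le C\ln(2+r)$ for suitable $C$. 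Hence the proposition applies and $(S,g_1)$ is parabolic.

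For the second assertion, I would argue by contradiction: suppose there is a harmonic diffeomorphism from $A(r)=\{z\in\C\mid 1\le|z|<r\}$ onto a hyperbolic cusp end. The cusp end, after removing a compact neighborhood of its boundary circle, is a hyperbolic metric of finite area on an annulus (compactified by one puncture), so the curvature hypothesis holds exactly as above. By the proposition, $A(r)$ with its flat (or any conformal) metric would have to be parabolic. But $A(r)$ with $r<\infty$ is conformally an annulus of finite modulus $\frac1{2\pi}\ln r$, hence non-parabolic by the remarks on conformal modulus in Section~2 (a bounded nonconstant harmonic function vanishing on the inner circle exists, e.g. $\ln|z|$). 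This contradiction proves nonexistence. One has to be slightly careful that the target may only be a cusp \emph{end} rather than a whole finite-area surface, but since a compact boundary is all the proposition requires, truncating the cusp to get a surface with compact boundary $\partial S$ and applying the proposition verbatim works.

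The main technical point — and the only place where a genuine estimate is needed — is the area growth bound for the $r$-neighborhood of the boundary inside a hyperbolic cusp. This is where I expect to spend a line or two: one must note that distances from $\partial S$ grow like $\log$ of the horocyclic "height" while area shrinks like height$^{-2}$, so that $|S_r^2|$ is in fact \emph{uniformly bounded}, making the logarithmic bound in the proposition's hypothesis trivially satisfied. Everything else is bookkeeping: $K_{g_2}\equiv-1\Rightarrow K_{g_2}^-\equiv1$, finitely many ends so finite total area contribution, and the conformal-modulus dichotomy for annuli already recalled. So the corollary follows essentially immediately once this neighborhood-area observation is in place.
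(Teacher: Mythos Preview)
Your approach is exactly the intended one: the paper states the corollary with no proof, treating it as an immediate consequence of the preceding proposition, and your verification of the curvature hypothesis via finite total area is the right observation. Since $g_2$ has finite area, you actually have $\int_{S_r^2}K_{g_2}^-\,d\sigma_{g_2}\le |S|_{g_2}<\infty$ for \emph{all} $r$, so the detour through linear growth of cusp neighborhoods is unnecessary (though not wrong).

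Two small points to tighten. First, the proposition is stated for a surface with compact (hence nonempty) boundary, while in the first assertion $S$ may be boundaryless; you should remove a compact $K\subset S$ with smooth boundary and apply the proposition to $S\setminus\inter K$ (using that parabolicity of $S$ is equivalent to parabolicity of $S\setminus\inter K$, as recalled in Section~2). Second, for the annulus case you write ``$A(r)$ with its flat metric'': the restriction of the Euclidean metric to $A(r)$ with $r<\infty$ is \emph{not} complete, so the proposition does not literally apply. The fix is immediate---take any complete metric in the same conformal class (e.g.\ the Poincar\'e metric on the finite-modulus annulus), since both harmonicity of the map and parabolicity of the domain depend only on the conformal class in dimension~$2$---but it is worth saying explicitly.
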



\section{Preliminaries on the minimal surface equation}\label{sec:mse}

Let $\Ome$ be an open subset inside a Riemanniann surface $\Sigma$ and $u$ be a function
on $\Ome$. In the following, we will use the following notations
\begin{itemize}
\item $G_u$ is the graph of $u$ in $\Sigma\times \R$,
\item $W_u$ is $\sqrt{1+\|\nabla u\|^2}$,
\item $X_u$ is the vectorfield $\frac{\nabla u}{W_u}$,
\item $N_u$ is $(X_u,-\frac1{W_u})$ the downward unit normal to $G_u$ and 
\item if $\gamma$ is a curve in $\Ome$, $F_u(\gamma)=\int_\gamma X_u\cdot\nu$ where $\nu$
is a unit normal to $\gamma$.
\end{itemize}
$F_u(\gamma)$ is called the flux across $\gamma$. Of course, $F_u(\gamma)$ depends on the
choice of $\nu$ but, in the following, $\gamma$ will be often a boundary component of
some open subset so $\nu$ will be always chosen as the outward pointing unit normal.

\subsection{The minimal surface equation}

The function $u$ solves the minimal surface equation if
\begin{equation*}\tag{MSE}\label{eq:mse}
0=\Div\left(\frac{\nabla u}{\sqrt{1+\|\nabla u\|^2}}\right)=\Div X_u
\end{equation*}
This is equivalent to say that $G_u$ is a minimal surface in $\Ome\times \R$.

We are going to study the Dirichlet problem for the~\eqref{eq:mse}. This problem has been
studied by many different authors. We refer to the works of Jenkins and Serrin \cite{JeSe}
for $\R^2$, Nelli and the last author \cite{NeRo}, the authors \cite{MaRoRo} for $\H^2$
and Pinheiro \cite{Pin} for the the general case. We will gather here some results whose
proof can be found in these papers.

The first result is a classical compactness result.

\begin{thm}\label{th:comp}
Let $(u_n)_n$ be a uniformly bounded sequence of solutions of the~\eqref{eq:mse} on an open
subset $\Ome$ of $\Sigma$. There is a subsequence of $(u_n)_n$ that converges to a solution
$u$ of the~\eqref{eq:mse}; the convergence is smooth on each compact subset of $\Ome$.
\end{thm}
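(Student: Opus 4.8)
The plan is to reduce everything to standard interior estimates for quasilinear elliptic equations, the one genuinely nontrivial ingredient being an a priori interior gradient bound for solutions of~\eqref{eq:mse}.

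First I would invoke the interior gradient estimate for the minimal surface equation: if $u$ solves~\eqref{eq:mse} on a geodesic ball $B_{2\rho}(p)\subset\Sigma$ and $\sup_{B_{2\rho}(p)}|u|\le M$, then $\|\nabla u(p)\|\le C$, where $C$ depends only on $\rho$, $M$ and bounds for the geometry (curvature and injectivity radius) of $\Sigma$ near $B_{2\rho}(p)$. In $\R^2$ this is the Bombieri--De Giorgi--Miranda estimate; in the Riemannian setting it is part of the material collected in the references quoted above (see \cite{Pin}, and \cite{NeRo,MaRoRo} for the case of $\H^2$), and it can also be obtained from curvature estimates for stable minimal surfaces, since a minimal graph is area-minimizing and hence stable. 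Taking $M=\sup_n\sup_\Ome|u_n|<\infty$, this yields: for every compact $K\subset\Ome$ there is $C_K>0$ with $\sup_K\|\nabla u_n\|\le C_K$ for all $n$.

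Next, on a compact $K\subset\Ome$, equation~\eqref{eq:mse} reads, in local coordinates, $\sum_{i,j}a^{ij}\,\partial^2_{ij}u=b$, where $a^{ij}$ and $b$ depend smoothly on the point and on $\nabla u$, and on the region $\{\|\nabla u\|\le C_K\}$ the matrix $(a^{ij})$ is uniformly elliptic with bounds depending only on $C_K$ and the metric. Thus each $u_n$ solves on $\Int K$ a uniformly elliptic quasilinear equation with uniformly controlled structure and uniformly bounded $\|u_n\|_{C^1}$. De Giorgi--Nash--Moser estimates then give a uniform bound for $\|u_n\|_{C^{1,\alpha}(K')}$ on smaller compacts $K'$; the coefficients $a^{ij}$ and $b$ evaluated along $u_n$ are then uniformly bounded in $C^\alpha$, so interior Schauder estimates give a uniform $C^{2,\alpha}(K'')$ bound, and differentiating the equation and bootstrapping yields uniform $C^{k,\alpha}$ bounds on compacts for every $k$.

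Finally I would exhaust $\Ome$ by compacts $K_1\subset K_2\subset\cdots$, apply Arzel\`a--Ascoli on each and a diagonal argument to extract a subsequence of $(u_n)_n$ converging in $C^\infty_{\mathrm{loc}}(\Ome)$ to a function $u$; since the convergence is at least $C^2$ on compacts, passing to the limit in~\eqref{eq:mse} shows that $u$ solves it, and $u$ is smooth by elliptic regularity. The main obstacle is precisely the interior gradient estimate in the Riemannian setting; granting it, the rest is routine elliptic PDE theory.
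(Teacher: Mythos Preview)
The paper does not give its own proof of Theorem~\ref{th:comp}: it is presented as a ``classical compactness result'' whose proof is to be found in the cited references \cite{JeSe,NeRo,MaRoRo,Pin}. Your proposal is correct and is exactly the standard argument in those references: interior gradient estimate, then uniform ellipticity on compacts, Schauder/De Giorgi--Nash bootstrap to $C^{k,\alpha}$, and Arzel\`a--Ascoli plus a diagonal extraction. There is nothing to compare; you have reconstructed the intended proof.

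One minor remark: instead of invoking the Bombieri--De Giorgi--Miranda type gradient estimate directly, you could equally well obtain the local $C^1$ control from the curvature estimates for stable minimal surfaces of \cite{RoSoTo}, which is the route the paper favours elsewhere (e.g.\ in the proof of Proposition~\ref{prop:divline} and in the appendix). Either path works.
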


We have introduced the notation $F_u(\gamma)$ for curves in $\Ome$, actually this notion
can be extended to subarcs of $\partial \Ome$ if $\Ome$ is smooth and $u$ solves
\eqref{eq:mse}. Indeed, $X_u$ is bounded and, in that case, has vanishing divergence.
Actually, we can often extend continuously the value of $X_u$ on $\partial\Ome$.

\begin{lem}\label{lem:bound}
Let $\Ome$ be an open subset in $\Sigma$ and $\gamma$ a geodesic arc contained in
$\partial\Ome$ and $u$ a solution of \eqref{eq:mse} in $\Ome$.
\begin{itemize}
\item If $u$ diverges to $+\infty$ (resp. $-\infty$) as one approaches $\gamma$, then
$X_u$ extends continuously on $\gamma$ with $X_u=\nu$ (resp. $X_u=-\nu$).
\item If $F_u(\gamma)=\ell(\gamma)$ (resp. $-\ell(\gamma)$), then $u$ diverges to
$+\infty$ (resp. $-\infty$) as one approaches $\gamma$.
\end{itemize}
\end{lem}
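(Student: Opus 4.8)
\textbf{Plan for Lemma~\ref{lem:bound}.}
The two bullet points are really two directions of the same correspondence, and I would treat them separately. For the first bullet, assume $u\to+\infty$ as one approaches the geodesic arc $\gamma\subset\partial\Ome$. The vectorfield $X_u=\nabla u/W_u$ has norm strictly less than $1$ everywhere in $\Ome$, and since $\Div X_u=0$ it is a bounded divergence-free field, so it extends continuously (at least in a weak/trace sense) to $\gamma$; the content is that the extension equals the outward unit normal $\nu$ along $\gamma$. Here I would use the fact that $\gamma$ is a geodesic: pick a point $p\in\gamma$ and work in Fermi (geodesic normal) coordinates $(s,t)$ along $\gamma$, with $t\ge 0$ the signed distance into $\Ome$. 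Because $u\to+\infty$ as $t\to 0^+$, the gradient $\nabla u$ must blow up, and the only way $X_u$ can fail to approach $\nu=-\partial_t$ (pointing out of $\Ome$, i.e.\ in the $-t$ direction) is if the tangential component $u_s/W_u$ stays bounded away from $0$; but then integrating $X_u$ against test vectorfields and using $\Div X_u=0$ forces a contradiction with the blow-up, because $|X_u|<1$ caps how much tangential flux can be carried while $\partial_t u\to+\infty$. More cleanly: one compares $u$ near $\gamma$ with the explicit ``half-Scherk'' barrier over a thin strip adjacent to a geodesic — the minimal graph taking value $+\infty$ on $\gamma$ and finite values on the rest of the strip boundary — whose normal on $\gamma$ is exactly $\nu$; the maximum principle then squeezes $X_u$ between two such barriers. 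The $-\infty$ case is identical after replacing $u$ by $-u$.

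For the second bullet, assume $F_u(\gamma)=\ell(\gamma)$, i.e.\ $\int_\gamma X_u\cdot\nu\,ds=\int_\gamma 1\,ds$. Since $|X_u\cdot\nu|\le |X_u|<1$ pointwise in $\Ome$, the only way the integral over $\gamma$ can attain the length is that $X_u\cdot\nu\to 1$ as one approaches $\gamma$, which (again using $|X_u|<1$) forces $|X_u|\to 1$ and hence $|\nabla u|=W_u|X_u|\to\infty$ near $\gamma$, with $\nabla u$ asymptotically parallel to $\nu$. From $|\nabla u|\to\infty$ one wants to upgrade to $u\to+\infty$: this is where I'd again use a barrier/comparison argument. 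Take a subarc $\gamma_0\subset\gamma$ and a thin one-sided neighborhood $U$ of $\gamma_0$ in $\Ome$ bounded by $\gamma_0$ and a curve $c$ at distance $\eps$; on $U$ the function $u$ solves \eqref{eq:mse}, and on the ``inner'' part $c$ of $\partial U$ it is bounded below by some constant $-M$ (shrinking $U$ if needed), while the Scherk-type minimal graph $h_\eps$ over the strip that equals $+\infty$ on $\gamma_0$, $-M$ on $c$, and is continued arbitrarily on the two short sides, satisfies $h_\eps\le u+(\text{const})$ by the maximum principle — but $h_\eps\to+\infty$ at $\gamma_0$, so $u\to+\infty$ there too. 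Doing this for a covering family of subarcs gives divergence along all of $\gamma$. The $-\ell(\gamma)$ case is the reflection $u\mapsto -u$.

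The main obstacle, and the step I would spend the most care on, is the barrier construction over a thin neighborhood of a geodesic arc inside an abstract Riemannian surface $\Sigma$ — i.e.\ producing a solution of \eqref{eq:mse} on a geodesic ``slab'' with $+\infty$ boundary data on the geodesic side and controlled finite data elsewhere, and verifying that its conormal on the geodesic edge is exactly $\nu$. In $\R^2$ this is the classical Scherk surface and in $\H^2$ it is in \cite{CoRo,NeRo}; for general $\Sigma$ one needs the Jenkins--Serrin-type existence over small geodesic quadrilaterals, which is exactly what Pinheiro \cite{Pin} (and the compactness Theorem~\ref{th:comp}) supplies, but one must check the curvature of $\Sigma$ enters only through the (finite) data side and not through the geodesic edge — which is why the arc is required to be a \emph{geodesic}, so that the relevant comparison in Fermi coordinates has the same leading-order behavior as the Euclidean model. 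Once that barrier is in hand, both implications follow by the maximum principle and the elementary pointwise bound $|X_u|<1$ together with $\Div X_u=0$; these remaining steps are routine and I would not grind through them here.
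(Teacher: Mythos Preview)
Your approach is workable but different from the paper's, and the second bullet has a logical wrinkle worth flagging.

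For the first bullet the paper does \emph{not} use Scherk-type barriers at all. Instead it invokes the equicontinuity result Proposition~\ref{prop:equi2}: since $u\to+\infty$ on $\gamma$, the intrinsic distance from points of $G_u$ near $\gamma$ to $\partial G_u$ is bounded below, so curvature estimates for stable minimal surfaces give uniform control of the second fundamental form and hence equicontinuity of $X_u$ up to $\gamma$. This yields the continuous extension for free; the identification $X_u=\nu$ then follows from the flux identity $F_u(\gamma)=\ell(\gamma)$ (Lemma~2.5 of \cite{MaRoRo}), since a continuous unit-norm-or-less field whose normal integral attains the length must equal $\nu$ pointwise. Your barrier route is the classical Jenkins--Serrin argument and is perfectly valid, but it requires building and justifying the local half-Scherk model on $\Sigma$, exactly the obstacle you singled out. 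The paper's equicontinuity argument sidesteps that entirely and works uniformly on any Riemannian surface once one has the curvature estimates of \cite{RoSoTo}.

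For the second bullet, your chain $F_u(\gamma)=\ell(\gamma)\Rightarrow X_u\cdot\nu\to 1$ pointwise $\Rightarrow|\nabla u|\to\infty\Rightarrow u\to+\infty$ has a gap at the first arrow: the flux $F_u(\gamma)$ is defined only as a boundary trace of a bounded divergence-free field, and equality with $\ell(\gamma)$ gives $X_u\cdot\nu=1$ only in that trace sense, not as a pointwise limit along arbitrary sequences approaching $\gamma$. Without already knowing $X_u$ extends continuously (which is what the first bullet gives in the \emph{other} direction), you cannot pass to pointwise convergence here. The standard fix, and what Lemma~3.6 of \cite{MaRoRo} does, is to argue by contraposition: if $u$ stays bounded along some sequence approaching a point $p\in\gamma$, build an upper barrier over a small half-disk to force $X_u\cdot\nu\le 1-\delta$ on a subarc near $p$, which makes the flux over that subarc strictly less than its length and hence $F_u(\gamma)<\ell(\gamma)$. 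Your final barrier comparison ``$h_\eps\le u+\text{const}$'' is also not quite how the maximum principle applies when $h_\eps=+\infty$ on $\gamma_0$, since that is precisely the boundary piece where you do not yet control $u$; the contrapositive framing avoids this circularity.
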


\begin{proof}
The first statement is almost contained in Lemma~2.5 in \cite{MaRoRo} where $u\to+\infty$
on $\gamma$ implies $F_u(\gamma)=\ell(\gamma)$ is proved. Actually, if $u\to +\infty$ on
$\gamma$, Proposition~\ref{prop:equi2} implies that $X_u$ is equicontinuous near $\gamma$
so it extends continuously to $\gamma$. The value of $X_u$ on $\gamma$ is then a
consequence of $F_u(\gamma)=\ell(\gamma)$.

The second statement is Lemma~3.6 in \cite{MaRoRo}.
\end{proof}

An other result is Lemma~2.7 in \cite{MaRoRo}.

\begin{lem}\label{lem:div}
Let $\Ome$ be an open subset in $\Sigma$ and $\gamma$ a geodesic arc contained in
$\partial\Ome$ and $(u_n)_n$ a sequence of solutions of \eqref{eq:mse} in $\Ome$ which
extend continuously to $\gamma$. If $(u_n)_n$ diverges to $+\infty$ (resp. $-\infty$) on
$\gamma$ while remaining bounded on compact subsets of $\Ome$ then $F_{u_n}(\gamma)\to
\ell(\gamma)$ (resp. $-\ell(\gamma)$).
\end{lem}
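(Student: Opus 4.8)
\textbf{Plan for the proof of Lemma~\ref{lem:div}.}
The strategy is to combine the pointwise flux bound on the geodesic arc $\gamma$ coming from Lemma~\ref{lem:bound} with the compactness Theorem~\ref{th:comp} applied to the interior. First I would note that $X_{u_n}\cdot\nu\le 1$ everywhere on $\gamma$ (since $|X_{u_n}|<1$ as $X_u=\nabla u/W_u$ has norm strictly less than $1$), so that $F_{u_n}(\gamma)=\int_\gamma X_{u_n}\cdot\nu<\ell(\gamma)$ for every $n$, giving $\limsup_n F_{u_n}(\gamma)\le\ell(\gamma)$ for free. The content of the lemma is therefore the reverse inequality $\liminf_n F_{u_n}(\gamma)\ge\ell(\gamma)$.

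To get the lower bound I would argue by contradiction. Suppose along a subsequence $F_{u_n}(\gamma)\to c<\ell(\gamma)$. Since the $u_n$ are bounded on compact subsets of $\Ome$, Theorem~\ref{th:comp} produces a further subsequence converging smoothly on compact subsets of $\Ome$ to a solution $u$ of~\eqref{eq:mse}. The key point is a divergence-theorem computation: for a small subdomain $\Ome'\subset\Ome$ whose boundary consists of (a subarc of) $\gamma$ together with curves $\beta$ pushed slightly into the interior, $\int_{\partial\Ome'}X_{u_n}\cdot\nu=0$ because $\Div X_{u_n}=0$; letting $n\to\infty$, the interior part converges (by smooth convergence of $u_n\to u$ on the compact curves $\beta$) to $\int_\beta X_u\cdot\nu$, while the boundary part along $\gamma$ tends to $c$. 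One must be slightly careful that the flux on $\gamma$ passes to the limit correctly; this is where the hypothesis that $u_n$ extends continuously to $\gamma$ and Lemma~\ref{lem:bound} (together with the equicontinuity statement of Proposition~\ref{prop:equi2} invoked in its proof) are used to control $X_{u_n}$ near $\gamma$. The upshot is that the limit function $u$ has flux $F_u(\gamma)=c<\ell(\gamma)$ across $\gamma$, so in particular $X_u\neq\nu$ on a set of positive measure in $\gamma$; by the first bullet of Lemma~\ref{lem:bound} (contrapositive) $u$ does not diverge to $+\infty$ at $\gamma$, hence $u$ stays bounded near some point of $\gamma$.

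Finally I would derive the contradiction from the divergence hypothesis on $u_n$. Since $u_n\to+\infty$ uniformly on compact subsets of $\gamma$ while $u_n\to u$ (finite) on compact subsets of $\Ome$, one can use a barrier / maximum-principle comparison near $\gamma$: fix a point $q$ in the interior of $\gamma$ and a small half-disk neighborhood $D$ of $q$ in $\barre\Ome$; for $n$ large, $u_n$ is as large as we like on the part of $\partial D$ lying on $\gamma$, but bounded by a fixed constant on $\partial D\cap\Ome$ (by the interior convergence), and Scherk-type barriers for~\eqref{eq:mse} over $D$ force $u_n$ to be large at interior points of $D$ too, contradicting $u_n\to u$ there. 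This is the step I expect to be the main obstacle: one needs an effective barrier over the half-disk $D$ adapted to the geometry of $\Sigma$ showing that a solution which is large on a boundary arc and controlled on the rest of the boundary must be large in the interior — essentially the quantitative form of the second bullet of Lemma~\ref{lem:bound}. Granting such a barrier (available from the cited Jenkins--Serrin-type references \cite{JeSe,MaRoRo,Pin}), the contradiction closes the argument and yields $\lim_n F_{u_n}(\gamma)=\ell(\gamma)$; the $-\infty$ case is identical with $\nu$ replaced by $-\nu$.
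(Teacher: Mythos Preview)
Your outline is sound through step~5, but step~6 contains a genuine error. The claim that ``Scherk-type barriers over $D$ force $u_n$ to be large at interior points of $D$'' is false, and in fact the existence of Scherk solutions is precisely what refutes it: the solution $v_\infty$ on a half-disk with $v_\infty=+\infty$ on the diameter and $v_\infty=$ constant on the arc is \emph{finite} in the interior. Consequently the lower barrier $v_M$ (with $v_M=M$ on $\gamma\cap\partial D$ and $v_M=-K$ on $\partial D\cap\Ome$, where $-K$ is a uniform lower bound for $u_n$ there) satisfies $u_n\ge v_M$ on $D$, but $v_M$ stays uniformly bounded at each interior point as $M\to\infty$. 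So no contradiction with $u_n\to u$ at interior points can be extracted this way; large boundary data on a geodesic arc simply does not propagate into the interior for \eqref{eq:mse}.

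The repair is to use the same barrier in the correct direction: it bounds the \emph{limit} $u$, not the values of $u_n$, from below near $\gamma$. For each $M$ one has $u_n\ge v_M$ on $D$ once $n$ is large enough that $u_n\ge M$ on $\gamma\cap\partial D$; passing to the limit in $n$ (on compact subsets of $D\cap\Ome$) gives $u\ge v_M$, and then letting $M\to\infty$ yields $u\ge v_\infty$. Since $v_\infty\to+\infty$ at $\gamma$, the limit $u$ diverges to $+\infty$ on $\gamma$. By the first item of Lemma~\ref{lem:bound} this forces $X_u=\nu$ on $\gamma$ and hence $F_u(\gamma')=\ell(\gamma')$ for every compact subarc $\gamma'\Subset\gamma$; combined with your divergence-theorem identity $F_{u_n}(\gamma')=-F_{u_n}(\beta)\to -F_u(\beta)=F_u(\gamma')$ (where $\beta\Subset\Ome$), this contradicts $c<\ell(\gamma)$. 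This is the argument behind Lemma~2.7 of \cite{MaRoRo}, which the paper simply cites; once the barrier step is pointed the right way, your outline matches it.
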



\subsection{Divergence lines}

One important tool for our study is to understand the limit of a sequence
of solutions of \eqref{eq:mse}. In this section, we present the notion of divergence line
that was introduced by the first author in \cite{Maz1} for $\R^2$ and the authors in
\cite{MaRoRo} for $\H^2$.

In the sequel, we consider a complete Riemannian surface $\Sigma$,  a sequence of open subsets $(\Ome_n)_n\subset
\Sigma$ and a sequence $(u_n)_n$ of solutions of \eqref{eq:mse}, $u_n$ being defined
on $\Ome_n$. First we define the limit open subset 
$$
\Ome=\bigcup_n\big(\textrm{interior}\Big(\bigcap_{k\ge n}\Ome_k\Big)\big)
$$

 Because of the equicontinuity result given by Proposition~\ref{prop:equi2}, we
can assume that the sequence $(X_{u_n})_n$ converges to some continuous vectorfield $X$
on $\Ome$ (the convergence is locally uniform). So we can define the convergence domain of
the sequence as the open subset
$$
\boB=\boB(X)=\{p\in \Ome\,|\,\|X\|(p)<1\}
$$
and the divergence set as $\boD=\boD(X)=\Ome\setminus \boB=\{p\in \Ome\,|\,\|X\|(p)=1\}$.

\begin{prop}\label{prop:divline}
Let $p$ be a point in $\Ome$.
\begin{itemize}
\item If $p\in \boB$, let $D$ be the connected component of $\boB$ containing $p$. Then
$u_n-u_n(p)$ converges on $D$ to a solution of \eqref{eq:mse} (the convergence is locally
$C^k$ for any $k$).
\item If $p\in \boD$, let $\gamma$ be the geodesic in $\Ome$ passing through $p$ and
orthogonal to $X(p)$. Then $\gamma\subset \boD$ and, for any $q\in \gamma$, $X(q)$ is the
unit normal to $\gamma$.
\end{itemize}
\end{prop}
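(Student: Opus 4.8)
The plan is to study the two cases separately, both of them resting on the compactness Theorem~\ref{th:comp} and on the equicontinuity estimate of Proposition~\ref{prop:equi2}.

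For the first bullet, suppose $p\in\boB$ and let $D$ be the connected component of $\boB$ containing it. On $D$ the vectorfield $X=\lim X_{u_n}$ has norm strictly less than $1$; locally, say on a small disk $D'\Subset D$, we have $\|X\|\le 1-\delta$ for some $\delta>0$, and by locally uniform convergence $\|X_{u_n}\|\le 1-\delta/2$ on $D'$ for $n$ large. Since $X_{u_n}=\nabla u_n/W_{u_n}$, the inequality $\|X_{u_n}\|\le 1-\delta/2$ is exactly a uniform bound $\|\nabla u_n\|\le C(\delta)$ on $D'$. Hence the functions $v_n:=u_n-u_n(p)$ are uniformly Lipschitz on $D'$ and vanish at $p$, so they are uniformly bounded there; Theorem~\ref{th:comp} then gives a subsequence converging in $C^k_{loc}$ to a solution $v$ of \eqref{eq:mse}. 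A standard diagonal/connectedness argument upgrades this from $D'$ to all of $D$: the set of points of $D$ that have a neighborhood on which $v_n-v_n(q)$ converges is open, and using $v_n - v_n(q') = (v_n - v_n(q)) + (v_n(q) - v_n(q'))$ together with convergence of $v_n$ along a chain of overlapping disks connecting $p$ to any $q\in D$, one sees the full sequence $v_n=u_n-u_n(p)$ (not just a subsequence — uniqueness of the limit, once $v(p)=0$ and $\nabla v = X$ are fixed, forces convergence of the whole sequence) converges on $D$. Finally $\nabla v/W_v = X$ on $D$, so the limit is consistent with the global vectorfield $X$.

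For the second bullet, suppose $p\in\boD$, i.e. $\|X\|(p)=1$. The key point is that $X$ is divergence-free (in the weak sense) on $\Ome$: each $X_{u_n}$ has vanishing divergence, the $X_{u_n}$ are uniformly bounded and converge locally uniformly, so the limit $X$ satisfies $\int_\Ome X\cdot\nabla\psi=0$ for all test functions $\psi$. A bounded divergence-free field with $\|X\|\le 1$ everywhere and $\|X\|(p)=1$ must be ``locally constant in the transverse direction'' along the integral curve through $p$: concretely, parametrizing by the geodesic $\gamma$ through $p$ orthogonal to $X(p)$ and using Fermi coordinates $(s,t)$ adapted to $\gamma$ (with $\gamma=\{t=0\}$ and $X(p)=\partial_t$), the vanishing divergence together with $|X|\le 1$ and $|X|=1$ at $(0,0)$ forces, by an elementary maximum-principle / integration argument, that $X$ is everywhere unit along $\gamma$ and points in the $\partial_t$ direction; equivalently $X(q)$ is the unit normal to $\gamma$ for every $q\in\gamma$. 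Thus $\gamma\subset\boD$ and the stated normalization holds. One also needs $\gamma$ to be a complete geodesic arc inside $\Ome$ — but $\Ome$ is open and the argument shows $\gamma$ cannot exit $\boD$, hence stays in $\Ome$ as long as it is defined; the maximal such geodesic arc is what we call $\gamma$.

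The main obstacle is the second bullet: turning the soft facts ``$X$ is divergence-free, bounded by $1$, and achieves $1$ at $p$'' into the rigid conclusion that $X$ is the unit normal along the whole geodesic $\gamma$. The cleanest route is to work in Fermi coordinates around $\gamma$, write $X=(X^s,X^t)$, use $(X^s)^2+(X^t)^2\le 1$ with equality at the origin, and exploit $\Div X=0$ to propagate the equality: differentiating the constraint at a point where it is saturated shows $X^t$ is critical there, and the divergence equation then pins down the behavior on the whole curve. Care is needed because the metric is only Riemannian (not flat), so the divergence operator carries the conformal/metric factor; but along a geodesic the first-order geometry is controlled, and the computation goes through with the Jacobi-field expansion of the metric in Fermi coordinates. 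This is exactly the analogue of the corresponding statements in \cite{Maz1} and \cite{MaRoRo}, adapted to a general curvature $\Sigma$.
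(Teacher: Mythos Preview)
Your first bullet is correct and matches the paper. The second bullet has a genuine gap: you claim that a continuous divergence-free field $X$ with $\|X\|\le 1$ and $\|X\|(p)=1$ must equal the unit normal along the whole geodesic $\gamma$ through $p$ orthogonal to $X(p)$, by an ``elementary maximum-principle / integration argument''. This is false as stated. In the flat plane take $X(s,t)=(0,f(s))$ with $f$ smooth, $0\le f\le 1$, $f(0)=1$, $f\not\equiv 1$; then $\Div X=0$, $\|X\|\le 1$ with equality at the origin, $X(0,0)=\partial_t$, yet along $\gamma=\{t=0\}$ the field is orthogonal to $\gamma$ but not unit. The divergence-free condition alone does not propagate $\|X\|=1$; you must exploit more of the minimal-surface structure than the bare identity $\Div X_{u_n}=0$, and your proposal does not say how.

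The paper's route is geometric and quite different. At $p\in\boD$ one passes to the graphs $G_{u_n}$ themselves: the curvature estimates of \cite{RoSoTo} give uniformly bounded second fundamental form on a geodesic disk of fixed radius in $G_{u_n}$ around $(p,u_n(p))$, so after vertical translation these disks converge to a minimal disk $S$ through $(p,0)$ with horizontal normal $X(p)$. On $S$ the function $\theta=\langle N,\partial_t\rangle$ lies in the kernel of the Jacobi operator, $\Delta_S\theta+(\Ric(N,N)+\|A\|^2)\theta=0$; since $S$ is a limit of graphs one has $\theta\le 0$, while $\theta(p,0)=0$, so the strong maximum principle forces $\theta\equiv 0$. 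Hence $S$ is vertical, and a vertical minimal disk in $\Sigma\times\R$ lies in $\gamma\times\R$ for a geodesic $\gamma\subset\Sigma$; this gives $X$ equal to the unit normal to $\gamma$ on a segment of definite length, and an open-and-closed argument then covers the whole component of $\gamma\cap\Ome$. The idea you are missing is precisely this convergence of the \emph{surfaces} (not only of the fields $X_{u_n}$) together with the Jacobi/maximum-principle step on $\theta$.
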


\begin{proof}
Let us first assume that $p\in \boB$. On $D$, since $\|X\|<1$, we have $\nabla u_n\to
\frac X{\sqrt{1-\|X\|^2}}$. Thus
$u_n-u_n(p)$ converges locally in $C^1$ to a function $v$. Besides $u_n-u_n(p)$ being a
solution of \eqref{eq:mse}, Theorem~\ref{th:comp} implies that the convergence is locally in
$C^k$ for any $k$ and $v$ is a solution of \eqref{eq:mse}.

Let us now assume $p\in \boD$. Since $\|X\|(p)=1$, we have $N_{u_n}(p)\to X(p)$. There is
$\delta>0$ such that $G_{u_n}$ contains the geodesic disk of radius $\delta$ around
$(p,u_n)$ (take $\delta$ such that $2\delta\le d(p,\partial\Ome)$). Moreover, by curvature
estimates in \cite{RoSoTo}, the second fundamental form
of these graphs is uniformly bounded. As a consequence, after a vertical translation by
$-u_n(p)\partial_t$, this sequence of geodesic disks converges to a minimal disk $S$ of radius
$\delta$ which is orthogonal to $X(p)$ at $(p,0)$. Let $\theta=\la N,\partial_t\ra$ along
$S$, where $N$ is the unit-normal to $S$. Since $S$ is a limit of graphs $\theta \le 0$ and $\theta(p,0)=0$. Moreover $\theta$ is in the
kernel of the Jacobi operator: $0=\Delta_S\theta+(\Ric(N,N)+\|A\|^2)\theta$. So by the maximum
principle, $\theta=0$ along $S$. This implies that $S$ is contained in some $\gamma\times\R$
where $\gamma$ is a geodesic of $S$. Since $X$ is normal to $S$ at $(p,0)$, $\gamma$ is
normal to $X$ at $p$ as well. So $S$ is a geodesic disk of radius $\delta$ in $\gamma\times\R$. This
implies that, along the geodesic segment in $\gamma$ of length $2\delta$ and midpoint $p$,
$X$ is the unit normal to $\gamma$. Let $\tilde\gamma$ denote the connected component of
$\gamma\cap \Ome$ containing $p$. It is now clear that the subset of points $q$ in
$\tilde\gamma$ where $X(q)$ is the unit normal to $\tilde\gamma$ is open and closed in
$\tilde\gamma$ so it is the whole $\tilde\gamma$ and the second statement of the
proposition is proved.
\end{proof}

The above proposition tells that on each connected component of $\boB$ the sequence
$(u_n)$ converges up to a vertical translation. We also see that $\boD$ is made of
geodesics of $\Ome$ that we will call divergence lines of $X$. We
notice that since the unit normal to these geodesics is given by $X$, they are
embedded geodesics (perhaps periodic). The next lemma is important in order to describe the possible
divergence lines.

\begin{lem}
Let $\gamma$ be a divergence line, then it is a proper geodesic in $\Ome$.
\end{lem}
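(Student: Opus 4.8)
The plan is to argue by contradiction, using the structure of divergence lines already established in Proposition~\ref{prop:divline}. Recall that a divergence line $\gamma$ is an embedded geodesic in $\Ome$ along which $X$ is a unit normal. Suppose $\gamma$ is not proper in $\Ome$. Then there is a sequence of points $p_k\in\gamma$ with $d_\Sigma(p_k,p_0)\to\infty$ along $\gamma$ (the intrinsic distance along $\gamma$) but with $p_k\to p_\infty$ for some $p_\infty\in\Ome$. So $\gamma$ accumulates at the interior point $p_\infty$, and since $p_\infty\in\Ome$ we have $\|X\|(p_\infty)\le 1$; but $p_\infty$ is a limit of points of $\boD$ and $\boD$ is closed in $\Ome$ (it is the preimage of $\{1\}$ under the continuous function $\|X\|$), hence $p_\infty\in\boD$ as well. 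By the second item of Proposition~\ref{prop:divline} applied at $p_\infty$, the geodesic $\gamma_\infty$ through $p_\infty$ orthogonal to $X(p_\infty)$ is also a divergence line, and $X$ is its unit normal.

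The key point will then be to compare $\gamma$ and $\gamma_\infty$ near $p_\infty$. The tangent directions of $\gamma$ at the points $p_k$ are the directions orthogonal to $X(p_k)$, and by continuity of $X$ these converge to the direction orthogonal to $X(p_\infty)$, i.e.\ to the tangent direction of $\gamma_\infty$ at $p_\infty$. So for $k$ large, $p_k$ is a point close to $p_\infty$ at which $\gamma$ is nearly tangent to $\gamma_\infty$. Two geodesics that pass through nearby points with nearly parallel tangent directions must, in a fixed geodesically convex neighbourhood of $p_\infty$, either coincide or stay very close; more precisely, take a small geodesically convex ball $B$ around $p_\infty$ on which geodesics are controlled by their initial conditions, and in which $\gamma_\infty\cap B$ is a single arc. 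For $k$ large, $p_k\in B$ and the geodesic through $p_k$ with the orthogonal-to-$X(p_k)$ direction — which is exactly the local piece of $\gamma$ through $p_k$ — is $C^1$-close to $\gamma_\infty\cap B$. Since infinitely many $p_k$ lie in $B$ and they are, along $\gamma$, at unbounded mutual distance, $\gamma$ must re-enter $B$ infinitely often along arcs each close to $\gamma_\infty$, and one deduces that $\gamma$ either crosses $\gamma_\infty$ transversally (impossible, since both have $X$ as unit normal, which would force $X$ to point in two different directions at the crossing point) or winds back on itself. A cleaner way to phrase this: the function $q\mapsto X(q)$, restricted to the closed set $\boD$, together with the fact that through each point of $\boD$ there is a unique divergence line with prescribed normal $X$, means $\boD$ is foliated by the divergence lines; distinct divergence lines are disjoint. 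So $\gamma$ and $\gamma_\infty$ are either equal or disjoint. They are not disjoint (the $p_k\to p_\infty\in\gamma_\infty$, and one checks $p_\infty\in\overline\gamma$), so $\gamma=\gamma_\infty$, meaning $p_\infty\in\gamma$. But then $\gamma$ accumulates at one of its own points from arbitrarily far along itself, which forces $\gamma$ to be a periodic (closed) geodesic of finite length — contradicting that it passes through points $p_k$ at unbounded distance from $p_0$ along $\gamma$. Hence no such accumulation occurs and $\gamma$ is proper.

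The step I expect to be the main obstacle is making rigorous the dichotomy ``$\gamma$ and $\gamma_\infty$ are equal or disjoint'' — i.e.\ ruling out that $\gamma$ meets $\gamma_\infty$ transversally or is tangent to it without coinciding. The clean resolution is the transversality observation: if $\gamma$ and $\gamma_\infty$ met at a point $q$, then $q\in\boD$ and $X(q)$ would have to be simultaneously the unit normal to $\gamma$ and to $\gamma_\infty$ at $q$; two geodesics through $q$ with the same unit normal at $q$ are the same geodesic. Hence any two divergence lines that share a point coincide, which gives the disjointness/equality dichotomy at once and bypasses the delicate $C^1$-closeness estimate. Once that is in hand, the remaining arithmetic — that accumulation of $\gamma$ at an interior point from unbounded distance forces periodicity, which is incompatible with properness failing — is routine, using that the unit speed parametrization of $\gamma$ is, by the convexity of a small ball around $p_\infty$ and uniqueness of geodesics, locally injective and that a geodesic returning arbitrarily close to a point with arbitrarily close tangent direction is periodic.
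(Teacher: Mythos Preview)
Your approach has a genuine gap. The dichotomy ``two divergence lines sharing a point coincide'' is correct and your normal-direction argument for it is fine, but both subsequent steps fail. First, $p_\infty\in\overline\gamma$ does not show that $\gamma$ and $\gamma_\infty$ intersect: a simple geodesic can accumulate on a \emph{disjoint} geodesic --- think of a geodesic spiraling toward a closed geodesic on a hyperbolic surface --- so the case $\gamma\neq\gamma_\infty$ is not ruled out by what you wrote. Second, even granting $p_\infty\in\gamma$, the claim that an embedded geodesic returning arbitrarily close to one of its own points with arbitrarily close tangent must be periodic is \emph{false}: on a closed hyperbolic surface every leaf of a minimal geodesic lamination without closed leaves is a simple, non-closed, recurrent geodesic. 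The geodesic flow has many non-periodic recurrent orbits, so the closing-lemma intuition simply does not apply here.

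The paper's proof uses a completely different, analytic idea: an area comparison exploiting that the graphs $G_{u_n}$ are area-minimizing. If $\gamma$ accumulated at $p\in\Ome$, a small convex disk $D(p,r)$ would contain many (say nine) disjoint subarcs of $\gamma$ of length $\sim r$. Near those arcs the translated graphs $G_{u_n}-u_n(\gamma(0))\partial_t$ converge to pieces of the vertical strip $\gamma\times(-1,1)$, so for large $n$ the area of $G_{u_n}$ inside $D(p,r)\times(-1,1)$ is at least roughly $9\cdot 2r=18r$. On the other hand, area-minimality bounds that same area from above by the area of the remaining boundary of the subgraph region, which is about $16r$ for small $r$. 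This contradiction is what rules out accumulation. A purely topological or dynamical argument cannot succeed here because it does not distinguish a divergence line from an arbitrary embedded geodesic in $\Ome$; the area-minimizing property of $G_{u_n}$ is the essential extra input.
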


\begin{proof}
Assume that $\gamma$ is not a proper geodesic. So we can consider a arc-length
parametrization of $\gamma :\R_+\to \Ome$ and a sequence $(s_i)_i$ in $\R_+$ with
$s_{i+1}>s_i+1$ and $\gamma(s_i)\to p\in \Ome$. Let $r>0$ be such that the geodesic disk
$D(p,r)\subset \Sigma$ is convex, is included in $\Ome$, has area at most $r$ and the
length of $\partial D(p,r)$ is at most $7r$. By changing the sequence $(s_i)_i$, we assume
that $\gamma(s_i)\in D(p,r/2)$. This implies that the geodesic segment in $\gamma$ of length $r$ and
midpoint $\gamma(s_i)$ is included in $D(p,r)$. We notice that $D(p,r)\subset
\textrm{interior}\Big(\bigcap_{k\ge n_0}\Ome_k\Big)$ for some $n_0$.

Changing $u_n$ into $u_n-u_n(\gamma(0))$, we assume $u_n(\gamma(0))=0$. We are going to
estimate the area of $G_{u_n}$ inside $D(p,r)\times(-1,1)$ in two ways.

First let us compute an upper-bound. Let us define
$B_n=\{(q,t)\in D(p,r)\times(-1,1)\,|t<u_n(p)\}$. We have
$G_{u_n}\cap(D(p,r)\times(-1,1))\subset \partial B_n$ and $\partial B_n\setminus G_{u_n}$
has area at most the one of $\partial (D(p,r)\times(-1,1))$: $2r+2\times 7r=16r$. Since
$G_{u_n}$ is area minimizing in $D(p,r)\times \R$ we obtain the area of $G_{u_n}\cap
(D(p,r)\times(-1,1))$ is at most $16r$. 

Let us now compute a lower-bound. Let $U\Subset\Ome$ be an open subset containing
$\gamma[0,s_9]$ (we also have
$U\subset \textrm{interior}\Big(\bigcap_{k\ge n_0}\Ome_k\Big)$ for some $n_0$). By curvature
estimates~\cite{RoSoTo}, the curvature of the graphs $G_{u_n}$ over $U$ is uniformly
bounded. Let $S_n$ be the connected component of $G_{u_n}\cap(U\times(-1,1))$ containing
$(\gamma(0),0)$. As in the proof of Proposition~\ref{prop:divline}, the sequence $(S_n)_n$
converges to $\tilde\gamma\times(-1,1)$ where $\tilde\gamma$ is the connected component of
$\gamma\cap U$ containing $\gamma(0)$. $\tilde\gamma$ contains the geodesic segments of length $r$ and
midpoint $\gamma(s_i)$ ($0\le i\le 8$). So the area of the limit surface inside
$D(p,r)\times(-1,1)$ is at least $18r$. This implies that the area of $G_{u_n}\cap
(D(p,r)\times(-1,1))$ is at least $17r$ for $n$ large. We thus have a contradiction.
\end{proof}

Now we are interested in arguments that prevent some geodesics from being divergence lines. One
of these tools is the following result.

\begin{lem}\label{lem:nodiv1}
Let $\gamma$ be a divergence line and $p\in\gamma$ be a point. Let $D^+\subset \Ome$ be a
halfdisk centered at $p$ and contained strictly on one side of $\gamma$ and $\nu$ be the outward
pointing unit normal along $\partial D^+$. We assume that $D^+\subset\boB$ and consider $q\in
D^+$. Then if $X=\nu$ (resp. $X=-\nu$) along $\gamma$ then $\lim u_n(p)-u_n(q)=+\infty$
(resp. $-\infty$).
\end{lem}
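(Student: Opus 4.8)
The plan is to compare the sequence $(u_n)$ near $\gamma$ with explicit barriers built on the halfdisk $D^+$, using that the convergence vectorfield $X$ equals $\pm\nu$ along $\gamma$. Let me fix the case $X=\nu$ along $\gamma$; the other case follows by replacing $u_n$ with $-u_n$. Shrinking $D^+$ if necessary, I will arrange that $\overline{D^+}\subset\Omega$ and, since $D^+\subset\boB$, that $\|X\|\le 1-\delta$ on $\overline{D^+}\cap\Omega'$ for some slightly larger neighborhood, so that by Proposition~\ref{prop:divline} the normalized sequence $u_n-u_n(q)$ converges in $C^1_{loc}$ on the component of $\boB$ containing $q$ to a solution $v$ of \eqref{eq:mse}; in particular $u_n(q')-u_n(q)$ stays bounded for $q'$ in a fixed compact subset of $D^+$. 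So it suffices to show that $u_n(p)-u_n(q')\to+\infty$ for a well-chosen fixed $q'\in D^+$ close to $\gamma$.

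The key step is to produce, for each large constant $M>0$, a subsolution (lower barrier) $\underline{w}_M$ on a fixed subregion $\Omega_M\subset D^+$ with $p\in\partial\Omega_M\cap\gamma$, such that $\underline{w}_M\le u_n$ on $\partial\Omega_M$ for all $n$ large but $\underline{w}_M\to+\infty$ near $\gamma$ at a prescribed rate. The natural candidate is a Scherk-type / ``half-space'' barrier adapted to the geodesic $\gamma$: since $X=\nu$ on $\gamma$ forces the limiting graph to be vertical over $\gamma$ on the $D^+$ side, one expects local solutions of \eqref{eq:mse} on a thin lens-shaped region $\Omega_M$ bordering $\gamma$ that take value $+M$ on the arc of $\partial\Omega_M$ nearest $\gamma$ and value $0$ (or some fixed constant) on the rest of $\partial\Omega_M$, with flux close to $\ell$ along the near-$\gamma$ arc. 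Concretely, I would either invoke the existence of such ``Scherk-like'' solutions from the Jenkins–Serrin theory in the references \cite{JeSe,MaRoRo,Pin}, or construct one directly: on a small convex region one solves the Dirichlet problem with boundary data $M\cdot\chi_{\text{near arc}}$, which by the maximum principle lies below $u_n$ once $u_n$ is large enough near $\gamma$. The mechanism forcing $u_n$ large near $\gamma$ is precisely the flux: since $X=\nu$ along $\gamma$, for any short subarc $\beta\subset\gamma$ near $p$ we have $F_{u_n}(\beta)\to\ell(\beta)$, and by Lemma~\ref{lem:bound} (second statement, applied to the limit or via Lemma~\ref{lem:div}) this is incompatible with $u_n$ staying bounded near $\gamma$; quantitatively, one gets that $\min_{\partial'\Omega_M} u_n \to +\infty$, where $\partial'\Omega_M$ is the near-$\gamma$ arc.

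Putting these together: fix $q'\in D^+$ on the far arc of $\partial\Omega_M$. For $n$ large, the barrier gives $u_n\ge\underline{w}_M$ on $\Omega_M$, hence $u_n(p)\ge\liminf_{x\to p}\underline{w}_M(x)\ge M$ up to an additive error controlled by $\sup_{\partial'\Omega_M}|u_n-M|$, while $u_n(q')-u_n(q)$ is bounded uniformly in $n$ (by the $C^1_{loc}$ convergence on $\boB$ and the fact that one can connect $q$ to $q'$ inside a compact subset of $D^+\subset\boB$). Therefore $\liminf_n\big(u_n(p)-u_n(q)\big)\ge M - O(1)$; but actually $u_n$ is continuous at $p$ only if $p$ is a genuine boundary point, so more carefully I take an interior point $p_\epsilon\in D^+$ at distance $\epsilon$ from $p$ on the normal to $\gamma$, run the argument there to get $\liminf_n(u_n(p_\epsilon)-u_n(q))\ge M/2$ for $\epsilon$ small, and let $M\to\infty$ along a diagonal. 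Since $u_n$ is uniformly equicontinuous away from $\boD$... no — the point is that $p\in\boD$, so I instead note $u_n(p)-u_n(p_\epsilon)\ge -C$ for some fixed $C$ (as $u_n$ is a solution, its oscillation on the geodesic disk of radius $\epsilon$ around $(p,u_n(p))$ is bounded via the curvature estimates of \cite{RoSoTo} used already in Proposition~\ref{prop:divline}), giving $u_n(p)-u_n(q)\ge M/2-C\to+\infty$. The main obstacle I anticipate is the precise construction/citation of the Scherk-type lower barrier on a region adapted to a geodesic in a general Riemannian surface $\Sigma$ with the needed control ``value $+M$ on one side, bounded on the other, independent of $n$'' — this is where one must be careful to stay within the hypotheses of the available Jenkins–Serrin results rather than re-proving them, and to make the region $\Omega_M$ independent of $n$ while letting $M\to\infty$.
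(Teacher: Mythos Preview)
Your approach has a real gap and, more importantly, misses the simple idea that makes the lemma almost immediate.

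The gap: you never actually construct the barrier $\underline{w}_M$, and you yourself flag this as ``the main obstacle''. Your fallback bound $u_n(p)-u_n(p_\epsilon)\ge -C$ via the curvature estimates of \cite{RoSoTo} is not justified: those estimates control the second fundamental form of $G_{u_n}$, not the oscillation of $u_n$ as a function on $\Sigma$. In fact near $p\in\boD$ the graph is becoming vertical (since $N_{u_n}(p)\to X(p)$ is horizontal), so a horizontal displacement of size $\epsilon$ can produce an unbounded change in $u_n$; curvature bounds alone do not give a uniform lower bound for $u_n(p)-u_n(p_\epsilon)$.

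The missed idea: no barriers are needed. You already noted that $u_n-u_n(q)\to v$ on $D^+$ with $X_v=X$. Since $X_v=\nu$ along $\gamma$, Lemma~\ref{lem:bound} (first item) applied to $v$ on $D^+$ gives directly $v=+\infty$ on $\gamma$. So for any $M$ choose $q'\in D^+$ near $p$ with $v(q')\ge M$. Now comes the key elementary step: by continuity of $X$ and $X(p)=\nu$, there is a curve $c$ in $\overline{D^+}$ from $q'$ to $p$ with $c'\cdot X>0$ along $c$ (e.g.\ a short arc running toward $\gamma$). Since $X_{u_n}\to X$ uniformly on this compact curve, $c'\cdot X_{u_n}>0$ for $n$ large, i.e.\ $u_n$ is monotone increasing along $c$. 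Hence $u_n(p)\ge u_n(q')$, and therefore
\[
\liminf_n\big(u_n(p)-u_n(q)\big)\ \ge\ \liminf_n\big(u_n(q')-u_n(q)\big)\ =\ v(q')\ \ge\ M.
\]
Letting $M\to\infty$ finishes the proof. This monotonicity trick replaces your entire barrier construction and sidesteps the problem of controlling $u_n$ at $p\in\boD$.
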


\begin{proof}
Since $D_+\subset \boB$, we can assume that $u_n-u_n(q)\to v$, $v$ being  a solution to \eqref{eq:mse}.
We have $X_v=X$ so $X_v=\nu$ along $\gamma$. By Lemma \ref{lem:bound}, this implies that
$v$ takes the value $+\infty$ on $\gamma$. Let $M$ be positive. By the continuity of $X$,
there is a point $q'\in D^+$ close to $p$ such that $v(q')-v(q)\ge M$ and a curve $c$ in
$\barre D^+$ from $q'$ to $p$ such that $c'\cdot X>0$. So $c'\cdot X_{u_n}>0$ for large $n$ and 
$u_n(p)-u_n(q)\ge u_n(q')-u_n(q)$. Then $\liminf u_n(p)-u_n(q)\ge \liminf
u_n(q')-u_n(q)\ge M$ which gives the result.
\end{proof}

A consequence of the above lemma is the following. Let $c_1$ and $c_2$ be two connected
components of $\boB$ with a common divergence line in their boundary and $X$ pointing
into $c_2$ along it. Let $p_i\in c_i$ be two points, then $u_n(p_2)-u_n(p_1)\to +\infty$. We state the
next result only in the case where the surface is hyperbolic.

\begin{lem}\label{lem:nodiv2}
Let $(\gamma_n)_n$ be a sequence of geodesic arcs of length $2\delta>0$ with midpoints
$p_n$. Let $D_n^+$ be a geodesic half-disk with diameter $\gamma_n$, of radius $\delta$ and
strictly on one side of $\gamma_n$. We assume that $(D_n^+)_n$ converges to
$D^+$ a geodesic half-disk of center $p$, radius $\delta$ and on one side of a geodesic arc
$\gamma$. We assume that $(u_n)_n$ is a sequence of solutions of \eqref{eq:mse} on $D_n^+$
and $X_{u_n}\to X$ on $D^+$. Moreover we assume one of the following possibilities
\begin{itemize}
\item either $u_n$ takes the value $+\infty$ (resp. $-\infty$) along $\gamma_n$,
\item or the metric is hyperbolic on $D_n^+$ and $u_n$ is constant along $\gamma_n$.
\end{itemize}
In both cases, $p$ is not the end point of some divergence line of $X$. Moreover, in the
first case $X$ takes the value $\nu$ (resp. $-\nu$) along $\gamma$.
\end{lem}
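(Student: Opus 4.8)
The plan is to argue by contradiction in both cases and to exploit the rigidity of divergence lines coming from Proposition~\ref{prop:divline}: a divergence line ending at $p$ must be a geodesic through $p$ which is orthogonal to $X(p)$ at $p$. So the strategy is, first, to pin down the value at $p$ of $X$ (or of a suitable continuous extension of it), and, second, to deduce that the putative divergence line would be forced to coincide with $\gamma$ near $p$ — which is absurd, since that line meets $D^+$ arbitrarily close to $p$ while $D^+$ lies strictly on one side of $\gamma$.

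Consider first the case $u_n\to+\infty$ along $\gamma_n$ (the $-\infty$ case is identical with the obvious sign changes). For each $n$, Lemma~\ref{lem:bound} applied to $u_n$ on $D_n^+$ gives that $X_{u_n}$ extends continuously to the geodesic arc $\gamma_n\subset\partial D_n^+$ with $X_{u_n}=\nu_n$ there, where $\nu_n$ is the unit normal to $\gamma_n$ pointing out of $D_n^+$. Using the equicontinuity of the fields $X_{u_n}$ up to $\gamma_n$ (Proposition~\ref{prop:equi2}), which is uniform in $n$ because all the half-disks have the same size, together with $X_{u_n}\to X$ on $D^+$, $\gamma_n\to\gamma$ and $\nu_n\to\nu$, one obtains that $X$ extends continuously to $\gamma$ with $X=\nu$ along $\gamma$; this is the last assertion of the lemma, and in particular $X$ is continuous at $p$ with $X(p)=\nu(p)$, a unit vector normal to $\gamma$ at $p$. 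Now if some divergence line $\gamma'$ had $p$ as an endpoint, then $\gamma'\cap D^+$ would be an arc emanating from $p$ into $D^+$; by the second item of Proposition~\ref{prop:divline}, $X$ is the unit normal to $\gamma'$ at every interior point of $\gamma'$, so letting such a point tend to $p$ and using continuity of $X$ at $p$ and of the tangent direction of the geodesic $\gamma'$ gives $X(p)\perp\gamma'$ at $p$. Since $X(p)$ is also normal to $\gamma$ at $p$, the geodesics $\gamma$ and $\gamma'$ would have the same tangent at $p$, hence agree near $p$ — impossible, since $\gamma'$ meets $D^+$ near $p$ whereas $\gamma$ does not.

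In the hyperbolic case the idea is to remove the boundary arc by Schwarz reflection. Let $R_n$ be the hyperbolic reflection across the geodesic carrying $\gamma_n$ and $c_n$ the constant value of $u_n$ on $\gamma_n$; since $R_n$ is an isometry of the (locally) hyperbolic surface and $\gamma_n\times\R$ is totally geodesic in the product, the odd reflection $\hat u_n:=2c_n-u_n\circ R_n$ on $R_n(D_n^+)$ glues with $u_n$ along $\gamma_n$ into a $C^1$, hence minimal, graph over the full geodesic disk $D(p_n,\delta)$. From $\hat u_n\circ R_n=2c_n-\hat u_n$ one gets $X_{\hat u_n}\circ R_n=-dR_n(X_{\hat u_n})$, and evaluating this identity on the fixed geodesic $\gamma_n$ and splitting into tangential and normal parts shows that $X_{\hat u_n}$ is normal to $\gamma_n$ along $\gamma_n$. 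By interior equicontinuity (Proposition~\ref{prop:equi2}), a subsequence of $(X_{\hat u_n})_n$ converges locally uniformly on $D(p,\delta)$ to a continuous field $\hat X$ that restricts to $X$ on $D^+$ and is normal to $\gamma$ along $\gamma$; note that $p$ is now an interior point, so $\hat X$ is continuous at $p$ and $\hat X(p)$ is normal to $\gamma$ at $p$. If a divergence line $\gamma'$ of $X$ had $p$ as an endpoint, continuity of $\hat X$ along $\gamma'\cap D^+$ would force $\|\hat X(p)\|=1$ and $\hat X(p)\perp\gamma'$; then $\hat X(p)$ would be a unit vector normal both to $\gamma'$ and to $\gamma$ at $p$, so $\gamma'$ and $\gamma$ would agree near $p$, again contradicting that $\gamma'$ enters $D^+$. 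I expect the only genuine obstacle to be the Schwarz reflection step in the hyperbolic case — verifying that the odd reflection of a solution of \eqref{eq:mse} with constant data across a geodesic is again a solution (this is precisely where hyperbolicity is used, via $R_n$ being an isometry) and that the reflected gradient fields keep enough equicontinuity to pass to a continuous limit through the interior point $p$; everything else is a soft limiting argument resting on Propositions~\ref{prop:divline} and~\ref{prop:equi2}.
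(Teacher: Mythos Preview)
Your proposal is correct and follows essentially the same approach as the paper: in the first case, use Proposition~\ref{prop:equi2} (whose hypothesis $d_{G_u}((p,u(p)),\partial G_u)\ge\delta$ holds up to $\gamma_n$ precisely because $u_n=\pm\infty$ there) together with $X_{u_n}=\nu_n$ on $\gamma_n$ to extend $X$ continuously to $\gamma$ with value $\nu$; in the second case, use Schwarz reflection across the geodesic to reduce to interior equicontinuity on the full disk and obtain that $X$ extends to $\gamma$ orthogonally. The paper simply asserts that this continuous extension ``prevents $p$ from being an endpoint of some divergence line'', while you spell out the underlying geometric reason (a divergence line through $p$ would be forced to be tangent to $\gamma$ at $p$, hence equal to it); this elaboration is correct and is exactly what the paper has in mind.
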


\begin{proof}
By Proposition~\ref{prop:equi2}, in the first case, the sequence $X_{u_n}$ is uniformly
equicontinuous on the halfdisk of radius $\delta/2$. Since $u_n$ takes the value
$+\infty$, $X_{u_n}=\nu$ along $\gamma_n$. As a consequence, $X$ extends continuously to
$\gamma$ by the value $\nu$. This implies that $p$ is not the end point of some divergence
line of $X$.

In the second case, let us see the halfdisk $D_n^+$ as a halfdisk in $\H^2$. Since $u_n$
is constant along $\gamma_n$ we can extend the definition of $u_n$ to the whole geodesic
disk by symmetry. As above this implies that $X_{u_n}$ is uniformly equicontinuous on the
the halfdisk of radius $\delta/2$ and $X_{u_n}$ is orthogonal to $\gamma_n$ along it. So
$X$ extends continuously to $\gamma$ and is orthogonal to it. This prevents $p$ from being an
endpoint of some divergence line of $X$.
\end{proof}


\section{A Jenkins-Serrin type result}

In this section, we are interested in solving the Dirichlet problem for \eqref{eq:mse} on
some particular domains $\Ome$ of a complete hyperbolic surface $\Sigma$. So let us fix a
a complete hyperbolic surface $\Sigma$ with at least one non-parabolic end.


\subsection{Ideal domains and Jenkins-Serrin conditions}

We first define the notion of polygonal domains.

\begin{defn}\label{def:poldom}
A polygonal domain in $\Sigma$ is a connected open subset $\Ome$ such that
$\partial_\infty\Ome$ is made of a finite number of points and $\partial\Ome$ is made of a
finite number of geodesic arcs.
\end{defn}

If $\Ome$ is a polygonal domain, the geodesic arcs in the boundary are called the edges of
$\Ome$, the end points of these edges and points in $\partial_\infty \Ome$ are called the
vertices of $\Ome$. The
natural orientation of $\partial\Ome$ allows us to say that the edge $\gamma_2$ is the
successor of the edge $\gamma_1$ if $\gamma_2$ comes just after $\gamma_1$ when traveling along
$\partial\Ome$.

Let us remark that if $\gamma\subset \partial\Ome$ is a geodesic arc, it could be possible
that $\Ome$ is on both sides of $\gamma$. This implies that $\gamma$ is part of two edges
of $\Ome$ and, in the following, this arc has to be counted twice.

Among polygonal domains, we consider particular ones. Let $E_1,\dots, E_q$ be the
non-parabolic ends of $\Sigma$ ($q\ge 1$) and $p_{q+1},\dots, p_{q+n}$ the end-points of the
cusp ends. 

\begin{defn}
An ideal domain $\Ome$ in $\Sigma$ is a polygonal domain such that 
\begin{itemize}
\item $\partial_\infty\Ome =\{p_{q+1},\dots,p_{p+n}\}\bigcup \cup_{i=1}^q\{p_1^i,\dots,p_{2n_i}^i\}$
where $\{p_j^i\}_{1\le j\le 2n_i}$ are an even number of points in $\partial_\infty E_i$
cyclically ordered,
\item the edges of $\Ome$ are geodesic lines $\gamma_j^i$, $1\le i\le q$ and $1\le
j\le 2n_i$, where the end points of $\gamma_j^i$ are $p_j^i$ and $p_{j+1}^i$ (with
$p_{2n_i+1}^i=p_1^i$) and
\item the edge $\gamma_j^i$ is included in $E_i$.
\end{itemize}
(see Figure~\ref{fig:ideal})
\end{defn}

\begin{figure}[h]
\begin{center}
\resizebox{0.9\linewidth}{!}{\input{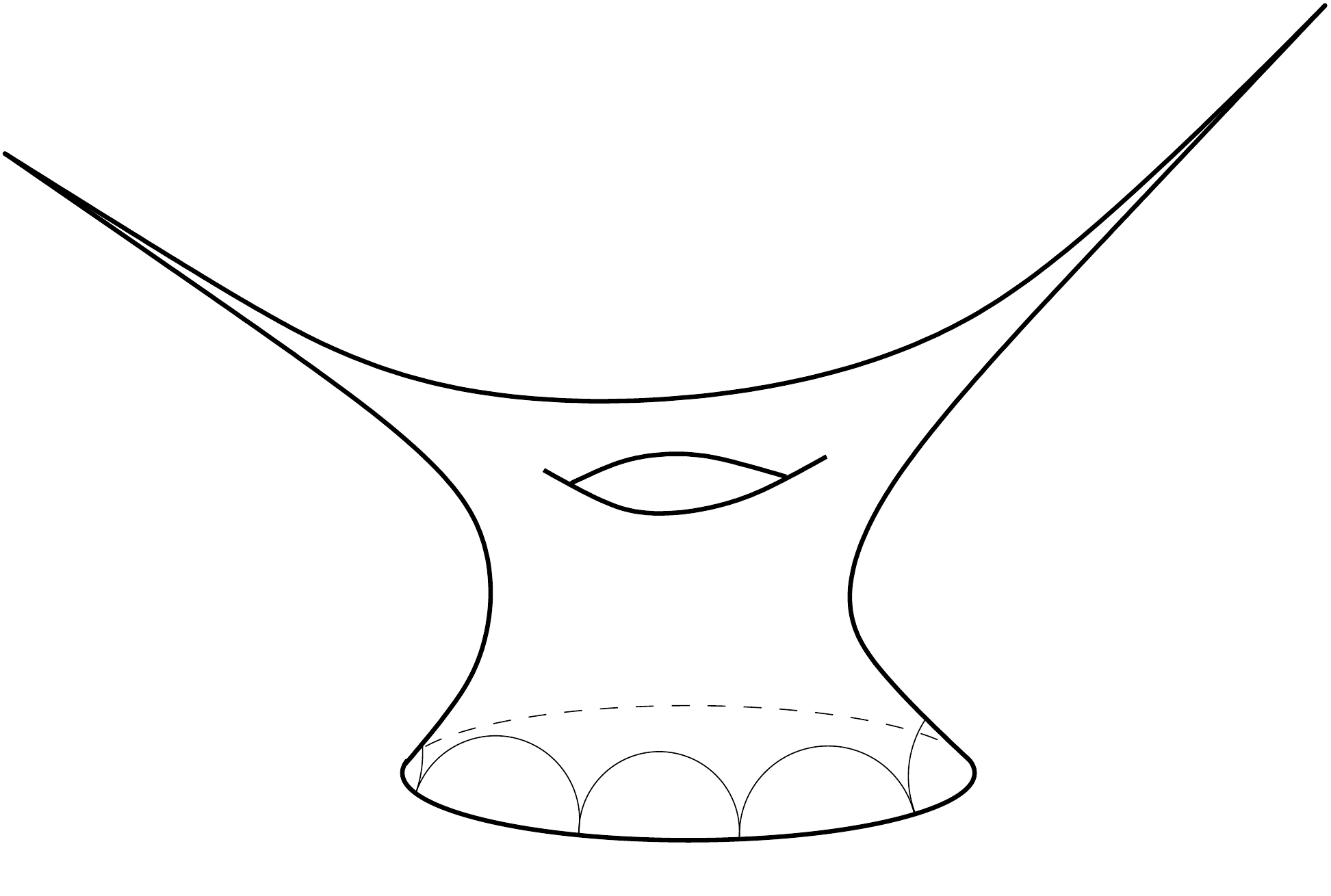_t}}
\caption{An ideal domain $\Ome$ in a hyperbolic surface $\Sigma$ \label{fig:ideal}}
\end{center}
\end{figure}

As a consequence, the boundary of an ideal domain $\Ome$ is made of an even number of
edges. In the following, each edge will be labeled "$a$" or "$b$" with the convention : two
successive edges have different labels. Since the number of end points on $\partial_\infty
E_i$ is even such a labeling is possible. 

\begin{defn}
Let $\Ome$ be an ideal domain in $\Sigma$. An inscribed polygonal domain in $\Ome$ is a
polygonal domain contained in $\Ome$ whose vertices are among the ones of $\Ome$.
\end{defn}

Let us notice that the edges of such an inscribed polygonal domain are either closed
geodesics or complete geodesics. Besides, $\Ome$ is itself a polygonal domain inscribed
in
$\Ome$. Jenkins-Serrin conditions take into account the "lengths" of boundary components of
inscribed polygonal domains. So let us explain how these conditions are defined.

Let $\Ome$ be an ideal domain in $\Sigma$ and consider the vertices $p_j^i$ as in the
definition. For each $i,j$, let $(H_j^i(t))_{t\ge 0}$ be a decreasing family of horodisks
centered at $p_j^i$ such that $d(\partial H_j^i(0),H_j^i(t))=t$. In the cusp end with end
point $p_i$, we also consider a decreasing family of horo-annuli $(H_i(t))_{t\ge 0}$ such that
$d(\partial H_i(0),H_i(t))=t$. Moreover we assume that the horodisks $H_j^i(0)$ and the
horo-annuli $H_i(0)$ are disjoint. If $\mathbf
t=(t_1^1,\dots,t_{2n_1}^1,t_1^2,\dots,t_{2n_q}^q,t_{q+1},\dots,t_{q+n})\in\R_+^{N_{\Ome}}$
where $N_\Ome=n+\sum_{i=1}^q2n_i$, we define
$$
H(\mathbf t)=(\cup_{i=1}^q\cup_{j=1}^{2n_i} H_j^i(t_j^i))\bigcup (\cup_{k=1}^nH_{q+k}(t_{q+k}))
$$
This is the union of disjoint horodisks and horo-annuli.

Let us fix a $a/b$ labeling on $\partial \Ome$ and choose $\boP$ an inscribed polygonal
domain in $\Ome$. The edges of $\boP$ can be gathered in three classes: the
ones which are edges of $\Ome$ labeled $a$ (we denote by $A_\boP$
the union of these geodesic lines), the ones which  are edges of
$\Ome$ labeled $b$ (let $B_\boP$ be their union), the other ones (let $C_\boP$ be their
union).

For $\bft\in \R_+^{N_\Ome}$, we define $A_\boP(\bft)=A_\boP\setminus H(\bft)$,
$B_\boP(\bft)=B_\boP\setminus H(\bft)$ and $C_\boP(\bft)=C_\boP\setminus H(\bft)$. We also
denote $\alpha(\bft)=\ell(A_\boP(\bft))$, $\beta(\bft)=\ell(B_\boP(\bft))$ and
$\gamma(\bft)=\ell(A_\boP(\bft)\cup B_\boP(\bft)\cup C_\boP(\bft))$ where $\ell$ denotes
the length of a curve.

On $\R_+^{N_\Ome}$, we define a partial order by $\bft'\ge \bft$ if $\bft'-\bft$ has only
non negative components. 

Let $\gamma$ be an edge of $\boP$. We notice that if all the components of
$\bft$ are sufficiently large then $\gamma$ only intersects the horodisks or horo-annuli in
$H(\bft)$ that are centered at the end points of $\gamma$. We assume this is true in the
following. Let us understand how the three above quantities evolve when coordinates in $\bft$
increase. The edges with the vertex $p_k$ as end-point are included in $C_\boP$. So increasing
$t_k$ by $t$, leave $\alpha(\bft)$ and $\beta(\bft)$ unchanged and increase $\gamma(\bft)$ by
at least $2t$ (there are at least two edges ending at $p_k$: it could be one geodesic line
counted twice). If $p_j^i\in\partial_\infty\boP$, when $t_j^i$ increases to $t_j^i+t$,
either $\alpha(\bft)$ (resp.
$\beta(\bft)$) increases by $t$ or stays unchanged, depending on whether an edge in $A_\boP$
(resp. $B_\boP$) ends at $p_i^j$; in any case $\gamma$ increases by at least $2t$. 

As a consequence, $\gamma(\bft)-2\alpha(\bft)$ and $\gamma(\bft)-2\beta(\bft)$ is non decreasing with $\bft$. So
the Jenkins-Serrin conditions $\gamma-2\alpha>0$ and $\gamma-2\beta>0$ are well defined for any inscribed
polygon $\boP$ and means that $\gamma(\bft)-2\alpha(\bft)>0$ and
$\gamma(\bft)-2\beta(\bft)>0$ for sufficiently large $\bft$.

If $\boP=\Ome$, the same argument proves that the condition $\alpha-\beta=0$ is well
defined since the value $\alpha(\bft)-\beta(\bft)$ does not depend on $\bft$ for large
$\bft$.

\begin{remarq}\label{rm:alternate}
The above analysis has the following consequence. If $\gamma_1$ and $\gamma_2$
are two successive edges of $\boP$ and none of them is included in $A_\boP$, we see that
$\gamma(\bft)-2\alpha(\bft)\to +\infty$ as the components of $\bft$ go to $+\infty$. So
the condition $\gamma-2\alpha>0$ is always satisfied for such an inscribed polygonal
domain. So, this condition can be studied only for inscribed polygonal domains $\boP$ such
that $a$ edges alternate along $\partial\boP$. For the $\gamma-2\beta$ condition, we can
focus on inscribed polygonal domains $\boP$ such that $b$ edges alternate along
$\partial\boP$
\end{remarq}

\begin{lem}\label{lem:finite}
let $\Ome$ be an ideal domain in $\Sigma$. There is a number $L(\Ome)$ depending only on $\Ome$ such the following is true. Let $\{\gamma_i\}_{1\le i \le L}$ be a set of
disjoint proper geodesics in $\Ome$ which are either closed or with end-points among the
vertices of $\Ome$. Then $L\le L(\Ome)$.
\end{lem}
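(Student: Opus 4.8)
The plan is to bound the number of disjoint proper geodesics in an ideal domain $\Ome$ by a combinatorial/topological argument, exploiting that each such geodesic is determined (up to finitely many choices) by how it separates the vertices of $\Ome$ on the ideal boundary. First I would recall that $\Ome$ is a polygonal domain of finite topology: it has a finite number $g$ of handles, a finite number of ends (the $p_k$'s and the portions of the $E_i$'s cut out by the edges $\gamma_j^i$), and finitely many vertices $V(\Ome)$, all of which are either ideal points $p_j^i$, cusp points $p_k$, or endpoints of edges. Since the geodesics $\gamma_i$ in the statement are proper with endpoints among $V(\Ome)$ (or are closed), each $\gamma_i$ that is non-closed has its two endpoints among the finite vertex set, so there are at most $\binom{|V(\Ome)|}{2}$ possible endpoint pairs; but two distinct disjoint geodesics can share endpoints (at ideal vertices), so a purely endpoint-counting bound is not enough and one needs the disjointness and properness to control how many geodesics join a fixed pair of ideal endpoints without crossing.

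Next I would reduce to a topological statement about simple arcs. A disjoint family $\{\gamma_i\}$ of properly embedded arcs and simple closed curves in the surface-with-boundary-and-punctures $\Ome$, pairwise disjoint, with the additional property (automatic here since each is a divergence-line-type geodesic, or simply because distinct complete geodesics in a hyperbolic surface meeting the same ideal points are isotopic rel endpoints only if they coincide) that no two are properly isotopic rel endpoints. The standard fact is that a surface of finite type admits only finitely many isotopy classes of pairwise-disjoint essential simple closed curves and proper arcs (the maximal cardinality of such a system — a pants/arc decomposition — is $-3\chi(\Ome)$ up to a constant, computable from the genus and the number of boundary components and punctures). So I would set $L(\Ome)$ to be that number, say $L(\Ome) = c(g, b, n)$ where $b$ is the number of boundary components and $n$ the number of punctures/ideal vertices, plus the finitely many isotopy-class choices; then any collection exceeding $L(\Ome)$ must contain two geodesics in the same isotopy class rel endpoints. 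Two disjoint complete geodesics in a hyperbolic surface that are isotopic (rel ideal endpoints for the ideal case, or freely isotopic for closed geodesics) must in fact coincide — this is the uniqueness of the geodesic representative in a free homotopy/isotopy class on a hyperbolic surface — contradicting that the $\gamma_i$ are distinct disjoint geodesics.

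The main obstacle I anticipate is the careful bookkeeping at the ideal boundary: the vertices $p_j^i$ are ideal points at infinity of the non-parabolic ends, and several of the $\gamma_i$ may limit to the same ideal vertex, so I must phrase the isotopy and the uniqueness of geodesics in a way that correctly handles arcs going out to the same end/ideal point. The clean way is to pass to the ideal compactification $\barre\Ome^\infty$ (available by the discussion in Section 2: each non-parabolic end compactifies by a circle, each cusp by a point) and regard each $\gamma_i$ as a proper arc in $\barre\Ome^\infty$ with endpoints on $\partial_\infty\Ome$, then invoke the finiteness of isotopy classes of disjoint proper arcs in a compact surface with boundary (with the $p_k$'s as marked points); the bound is then $L(\Ome) = -3\chi(\overline\Ome^\infty) + (\text{correction for marked points and the finitely many arcs joining two fixed boundary components})$, which depends only on $\Ome$. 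I would then only need, in addition, the elementary fact that distinct disjoint geodesics are non-isotopic, which follows from convexity of the hyperbolic metric (a geodesic bigon or strip bounded by two isotopic disjoint geodesics would be a flat strip, impossible in curvature $-1$), and this completes the argument.
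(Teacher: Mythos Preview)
Your approach is correct but genuinely different from the paper's. The paper argues directly by Gauss--Bonnet: for the closed geodesics, none is null-homotopic and no two are homotopic (else they would bound a disk or annulus of nonpositive area), so their number is bounded by a constant $\kappa_\Sigma$ depending on the topology; for the non-closed geodesics, each complementary component $\boP$ of $\Ome\setminus\bigcup\gamma_i$ satisfies $-|\boP|+(n_\boP+k_\boP)\pi=2\pi\chi(\boP)$, so $|\boP|$ is a positive integer multiple of $\pi$, whence the number of components is at most $|\Ome|/\pi$ and the number $k_\boP$ of $\gamma_i$'s on each $\partial\boP$ is at most $2+|\Ome|/\pi$. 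Summing yields an explicit $L(\Ome)$.

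Your route --- bound the number of pairwise disjoint, pairwise non-isotopic proper arcs and simple closed curves on a finite-type surface by a pants/arc-decomposition count, then use uniqueness of geodesic representatives to rule out isotopic pairs --- is a standard surface-topology argument and is perfectly valid here. Note, incidentally, that your ``flat strip impossible in curvature $-1$'' step is exactly the Gauss--Bonnet computation the paper does for closed geodesics, extended to ideal bigons (two ideal vertices contribute exterior angle $2\pi$, so the enclosed area vanishes). What you gain is conceptual cleanliness and a route that visibly works on any finite-type hyperbolic surface; what the paper's argument gains is self-containment (no appeal to the black-box bound on arc systems) and an explicit formula for $L(\Ome)$ in terms of $N_\Ome$, $n$, and $\chi(\Ome)$. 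Your hedged formula ``$-3\chi$ up to a constant plus corrections'' is fine for the purpose of the lemma, since only finiteness is needed, but if you wanted to be precise you would have to track marked points both in the interior (the cusps $p_k$) and on the boundary circles $\partial_\infty E_i$ (the $p_j^i$), which makes the exact count slightly more involved than the standard $-3\chi$.
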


\begin{proof}
First, we look at closed geodesics. If such a geodesic $\gamma$ bounds a topological
disk $D$ then the Gauss-Bonnet formula gives $-|D|=2\pi$ so none of these geodesics are
homotopically trivial. If two of them bound a topological annulus $A$, the Gauss-Bonnet formula
gives $-|A|=0$ so any two closed geodesics are not homotopic. So there is a constant
$\kappa_\Sigma$ depending only on the topology of $\Sigma$ such that the number of closed
geodesics in $\{\gamma_i\}_{1\le i \le L}$ is less than $\kappa_\Sigma$.

So now we remove the closed geodesics from $\{\gamma_i\}_{1\le i \le L}$ and we consider
the connected components $\boP$ of the complement of these geodesics. Applying Gauss-Bonnet
formula to $\Ome$, we obtain $-|\Ome|+(N_\Ome-n)\pi=2\pi\chi(\Ome)$ (let us recall
that $N_\Ome-n$ is the number of vertices of $\Ome$ on non parabolic ends). So
$|\Ome|=(N_\Ome-n)\pi-2\pi\chi(\Ome)$. From the Gauss-Bonnet formula we also obtain
$-|\boP|+(n_\boP+k_\boP)\pi=2\pi\chi(\boP)$ where $\boP$ is a connected component of
$\Ome\setminus(\bigcup_{1\le i\le L}\,\gamma_i)$, $n_\boP$ is the number of edges of $\boP$
among the edges of $\Ome$ and $k_\boP$ the number of edges among $\{\gamma_i\}_{1\le i \le
L}$. As a consequence, the area $|\boP|$ is an integer multiple of $\pi$ and the
number of such $\boP$ is less than
$|\Ome|/\pi=N_\Ome-n-2\chi(\Ome)$. Besides we have
$$
k_\boP \pi-2\pi\le(n_\boP+k_\boP)\pi-2\pi\chi(\boP)=|\boP|\le |\Ome|;
$$
so $k_\boP\le 2+|\Ome|/\pi=2+N_\Ome-n-2\chi(\Ome)$. So summing over all $\boP$ and using the
estimate of the number of closed geodesics we have
$$
L\le \kappa_\Sigma+(1+\frac{N_\Ome-n}2-\chi(\Ome))(N_\Ome-n-2\chi(\Ome)).
$$
\end{proof}


\subsection{A Jenkins-Serrin theorem}

Let $\Ome$ be an ideal domain with a $a/b$ labeling of $\partial\Ome$. We are interested
in solving the following Dirichlet problem on $\Ome$ that we call the
Jenkins-Serrin-Dirichlet problem:
\begin{equation}\label{eq:jsd}
\begin{cases}
&\Div\left(X_u\right)=0 \text{ on } \Ome\\
&u=+\infty \text{ on } A_\Ome\\
&u=-\infty \text{ on } B_\Ome.
\end{cases}
\end{equation}

\begin{thm}\label{th:js}
Let $\Ome$ be an ideal domain with a $a/b$ labeling of $\partial \Ome$. The
Jenkins-Serrin-Dirichlet problem has a solution if and only if $\alpha-\beta=0$ for
$\boP=\Ome$, and 
$$\gamma-2\alpha>0 \quad\text{and}\quad \gamma-2\beta>0$$
for all other inscribed polygonal domains $\boP$. Moreover if the solution exists, it is
unique up to an additive constant.
\end{thm}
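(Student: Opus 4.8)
The plan is to prove the two directions of the equivalence separately, with the necessity of the conditions being the easier part and the sufficiency (existence) requiring the divergence‑line machinery developed above.

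\textbf{Necessity.} Suppose a solution $u$ of \eqref{eq:jsd} exists. Fix $\bft$ large and let $\boP$ be an inscribed polygonal domain. Consider the truncated domain $\boP(\bft)=\boP\setminus H(\bft)$. Since $\Div X_u=0$, integrating over $\boP(\bft)$ gives $\int_{\partial\boP(\bft)}X_u\cdot\nu=0$, i.e. $F_u(\partial\boP(\bft))=0$. On the edges of $\boP$ lying in $A_\Ome$ we have $u\to+\infty$, so by Lemma~\ref{lem:bound} $X_u=\nu$ there and the contribution is $+\alpha(\bft)$; on the $B_\Ome$ edges $X_u=-\nu$ and the contribution is $-\beta(\bft)$; on each remaining edge $c\subset C_\boP$ and on the horocycle pieces $\partial H(\bft)\cap\barre\boP$, the flux is bounded in absolute value by the length. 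The horocycle pieces have bounded total length (independent of $\bft$, once $\bft$ is large, by the horoball geometry). Hence $0=F_u(\partial\boP(\bft))$ combined with $|F_u(c)|<\ell(c)$ strictly (the inequality is strict because $|X_u|<1$ on a geodesic arc unless $u\to\pm\infty$ there, which does not happen on a $C_\boP$ edge since $u$ is finite on both adjacent edges of $\Ome$ — here one invokes Lemma~\ref{lem:bound} in contrapositive form, plus Lemma~\ref{lem:div} to rule out $F_u(c)=\pm\ell(c)$) yields $\alpha(\bft)-\beta(\bft)+(\text{bounded term with }|\cdot|<\gamma(\bft)-\alpha(\bft)-\beta(\bft))=0$, which forces $\gamma(\bft)-2\alpha(\bft)>0$ and $\gamma(\bft)-2\beta(\bft)>0$. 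For $\boP=\Ome$ there are no $C_\boP$ edges, so the identity is exactly $\alpha(\bft)-\beta(\bft)=0$.

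\textbf{Sufficiency.} Assume the conditions hold. The strategy is Perron‑type combined with a compactness/divergence‑line argument. One first solves approximating problems: replace the ideal vertices by truncations (cut off each end at distance $R$, replacing the ideal edge $\gamma_j^i$ by a compact geodesic arc plus a short cross‑cut) and replace the boundary value $+\infty$ (resp. $-\infty$) by $n$ (resp. $-n$) on the $A$ (resp. $B$) edges, obtaining a bounded solution $u_{n,R}$ of the classical Dirichlet problem on the truncated compact domain (existence of such solutions follows from the standard theory cited, e.g.\ \cite{Pin,MaRoRo}). Normalize by subtracting $u_{n,R}(p_0)$ at a fixed interior point $p_0$. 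Apply Theorem~\ref{th:comp} and the divergence‑line analysis of Section~3: extract a subsequence so that $X_{u_{n,R}}\to X$, decompose $\Ome$ into the convergence domain $\boB$ and divergence set $\boD$, and study the geodesics making up $\boD$. The key is to show $\boD=\emptyset$ so that $u_{n,R}-u_{n,R}(p_0)$ converges on all of $\Ome$ to a solution $u$ by Proposition~\ref{prop:divline}; one then checks $u$ attains the right boundary values using Lemma~\ref{lem:div} (the $+\infty$/$-\infty$ boundary data are preserved in the limit because the flux across the approximating $A$/$B$ edges tends to $\pm$ their length).

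\textbf{The main obstacle} — and the heart of the theorem — is ruling out divergence lines. A divergence line $\gamma$ is a proper embedded geodesic in $\Ome$ (by the lemma above) with endpoints in $\partial_\infty\Sigma$ or forming a closed geodesic, hence by Lemma~\ref{lem:finite} there are only finitely many, and they chop $\Ome$ into finitely many polygonal pieces. On each piece $X$ comes from a genuine limit solution (Proposition~\ref{prop:divline}), and across each divergence line $X$ equals the unit normal, with a consistent orientation determined by which side $u_n\to+\infty$ faster (cf.\ the consequence of Lemma~\ref{lem:nodiv1}). The plan is to run a flux/Gauss‑Bonnet accounting on the polygonal domain $\boP$ bounded by the divergence lines together with adjacent edges of $\Ome$: comparing $F_X(\partial\boP)=0$ with the contributions $+\alpha$, $-\beta$ from $A$/$B$ edges, $\pm\ell$ from the divergence lines, and bounded terms, one shows that the existence of a divergence line would violate one of the strict inequalities $\gamma-2\alpha>0$, $\gamma-2\beta>0$ for that $\boP$ (or the equality $\alpha-\beta=0$ when $\boP=\Ome$). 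Lemmas~\ref{lem:nodiv1} and~\ref{lem:nodiv2} are the tools that prevent divergence lines from ending at ideal vertices or along boundary edges, so that the accounting is with a genuine inscribed polygonal domain. Finally, uniqueness up to an additive constant follows from the maximum principle applied to the difference of two solutions via the standard Collin–Krust type flux argument: if $u,v$ both solve \eqref{eq:jsd} and $\sup(u-v)>\inf(u-v)$, a level set argument on $u-v$ together with $\int\Div(X_u-X_v)=0$ on suitable sublevel domains and the divergence‑structure inequality $(X_u-X_v)\cdot\nabla(u-v)\ge 0$ gives a contradiction.
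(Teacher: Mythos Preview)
Your overall architecture matches the paper's, but the existence step has a genuine gap, and there are two smaller inaccuracies worth fixing.

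\textbf{Minor point (necessity).} You write that the horocycle pieces of $\partial\boP(\bft)$ have ``bounded total length''. In fact $\ell(\Gamma_\bft)\to 0$ as $\bft\to\infty$; this is what you actually need to conclude $\alpha-\beta=0$ for $\boP=\Ome$, since for $\boP=\Ome$ the flux identity reduces to $\alpha(\bft)-\beta(\bft)+F_u(\Gamma_\bft)=0$, and a merely bounded remainder would not give equality.

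\textbf{Unnecessary complication (existence).} The paper does not truncate $\Ome$ at distance $R$. It solves, by Perron, the problem $u_n=\pm n$ on $A_\Ome$, $B_\Ome$ directly on the full ideal domain $\Ome$ (Lemma~\ref{lem:perron}), and then lets $n\to\infty$. Your double limit $(n,R)$ introduces a moving domain into the divergence-line analysis for no gain; it is not wrong in principle, but it is not what the preliminaries of Section~\ref{sec:mse} are set up for and makes the argument harder to close.

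\textbf{The real gap.} Your plan to ``run a flux accounting on the polygonal domain $\boP$ bounded by the divergence lines'' does not explain \emph{which} component $\boP$ of $\boB(X)$ to use, nor why along \emph{every} divergence line in $\partial\boP$ the limit field $X$ points the same way. If some divergence lines in $\partial\boP$ carry $X=\nu$ and others $X=-\nu$, the flux identity gives no contradiction with either $\gamma-2\alpha>0$ or $\gamma-2\beta>0$. The paper resolves this by putting an \emph{oriented graph} structure on the components of $\boB(X)$: each divergence line becomes an arrow pointing toward the side into which $X$ points. Using Lemma~\ref{lem:nodiv1}, one shows this graph has no oriented cycle (a cycle would make a telescoping sum of $u_n$-differences diverge to $+\infty$ while equalling $0$). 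A finite acyclic directed graph has a sink, i.e.\ a component $\boP$ where \emph{all} adjacent arrows arrive; on that $\boP$ one has $X=-\nu$ along every divergence line of $\partial\boP$, and one checks $w=-\infty$ on $B_\boP$ as well, so the flux computation on $\boP\setminus H(\bft)$ gives
\[
\beta(\bft)+(\gamma(\bft)-\alpha(\bft)-\beta(\bft))\le \alpha(\bft)+\ell(\Gamma_\bft),
\]
hence $\gamma-2\alpha\le 0$, contradicting the hypothesis. Dually, a source yields $\gamma-2\beta\le 0$. This selection of the ``extremal'' component is the missing idea in your sketch; without it the contradiction does not follow.

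Finally, after ruling out divergence lines one still has to check both boundary values. From the construction one only knows a priori that \emph{one} of $w=+\infty$ on $A_\Ome$ or $w=-\infty$ on $B_\Ome$ holds; the other is then forced by a second flux computation on $\Ome\setminus H(\bft)$ together with the hypothesis $\alpha=\beta$. Your single reference to Lemma~\ref{lem:div} glosses over this step.

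Your uniqueness sketch is essentially the paper's: level set $\{u-v>t\}$ and the divergence theorem applied to $X_u-X_v$, using that $X_u=X_v$ on $\partial\Ome$ and $\ell(\Gamma_\bft)\to 0$.
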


We separate the proof of the above theorem in three parts.


\subsubsection{The conditions are necessary}\label{sec:necessar}

Let $u$ be a solution and consider an inscribed polygonal domain $\boP$ and $\bft\in
\R_+^{N_\Ome}$ with large coordinates. The boundary of $\boP\setminus H(\bft)$ is made
of $A_\boP(\bft)$, $B_\boP(\bft)$, $C_\boP(\bft)$ and arcs with curvature $1$ contained in
$\partial H(\bft)$, we denote by $\Gamma_\bft$ the union of these arcs. We notice that
$\ell(\Gamma_\bft)$ goes to $0$ as $\bft\to\infty$. Since $u$ solves \eqref{eq:mse},
Lemma~\ref{lem:bound} gives
\begin{equation}\label{eq:flux}
\begin{split}
0=F_u(\partial(\boP\setminus H(\bft)))&=F_u(A_\boP(\bft))+F_u(B_\boP(\bft))+ F_u(C_\boP(\bft))+
F_u(\Gamma_\bft)\\
&=\alpha(\bft)-\beta(\bft)+ F_u(C_\boP(\bft))+ F_u(\Gamma_\bft)
\end{split}
\end{equation}
Since $\|X_u\|<1$ along $C_\boP(\bft)$ and $\Gamma_\bft$, we have $|F_u(\Gamma_\bft)|\le
\ell(\Gamma_\bft)\xrightarrow[\bft \to \infty]{}0 $ and, if $\boP\neq \Ome$,
$C_\boP(\bft)$ is nonempty then
$|F_u(C_\boP(\bft))|<\ell(C_\boP(\bft))=\gamma(\bft)-\alpha(\bft)-\beta(\bft) $. Moreover
the difference $\gamma(\bft)-\alpha(\bft)-\beta(\bft)- |F_u(C_\boP(\bft))|$ is non
decreasing with $\bft$. So, if $\boP\neq \Ome$, there is $c>0$ such that for $\bft$ large
$\gamma(\bft)-\alpha(\bft)-\beta(\bft)- |F_u(C_\boP(\bft))|\ge
c$. Using this in \eqref{eq:flux}, we obtain
$$
0\le \alpha(\bft)-\beta(\bft)+(\gamma(\bft)-\alpha(\bft)-\beta(\bft)-c)+\ell(\Gamma_\bft)
$$
which implies $\gamma(\bft)-2\beta(\bft)\ge c/2>0$ for $\bft$ large enough. So
$\gamma-2\beta>0$ on $\boP$. Similar computations give $\gamma-2\alpha>0$ on $\boP$. If
$\boP=\Ome$, taking the limit in \eqref{eq:flux} gives $\lim_{\bft\to
\infty}\alpha(\bft)-\beta(\bft)=0$, so $\alpha-\beta=0$ for $\boP=\Ome$.

\begin{remarq}\label{rem:uniform}
If $\gamma$ is a subarc of $C_\boP$ and $\|X_u\|\le 1-\delta$ ($\delta>0$), the constant
$c$ appearing in the above proof can be taken equal to $\delta \ell(\gamma)$. 

A second remark is that if $\boP$ is a polygonal domain as in Definition~\ref{def:poldom}
which is contained in $\Ome$. We can also define $A_\boP$, $B_\boP$ and $C_\boP$ and look
at the $\gamma-2\alpha>0$ and $\gamma-2\beta>0$ conditions. The arguments above also tell us
that these conditions are satisfied for such polygonal domains.
\end{remarq}


\subsubsection{The existence part}

The first step of the existence part of Theorem~\ref{th:js} proof is given by the following result.

\begin{lem}\label{lem:perron}
Let $\Ome$ be an ideal domain with a $a/b$ labeling of $\partial \Ome$. For any $n$, there
is a solution to the following Dirichlet problem in $\Ome$
\begin{equation}\label{eq:perron}
\begin{cases}
&\Div\left(X_u\right)=0 \text{ on } \Ome\\
&u=n \text{ on } A_\Ome\\
&u=-n \text{ on } B_\Ome.
\end{cases}
\end{equation}
\end{lem}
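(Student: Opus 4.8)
The plan is to solve the bounded Dirichlet problem \eqref{eq:perron} by the classical Perron method, exactly as in the Jenkins--Serrin setting, the only subtlety being that $\Ome$ is non-compact (it has ideal vertices at horocycle cusps and on non-parabolic ends, and its edges are complete or closed geodesics of infinite length). First I would set up the Perron family: fix $n$ and let $\boF$ be the family of continuous subsolutions $v$ of \eqref{eq:mse} on $\Ome$ (i.e.\ $G_v$ has nonnegative mean curvature vector pointing downward, or equivalently $\Div X_v\ge 0$ in the viscosity/barrier sense) with $v\le n$ on a neighbourhood of $A_\Ome$ and $v\le -n$ on a neighbourhood of $B_\Ome$, suitably normalised near the ideal vertices. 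The family is nonempty (the constant $-n$ belongs to it) and is bounded above by $n$; the usual lattice properties hold (the max of two subsolutions is a subsolution, and replacing a subsolution on a small geodesic disk by the solution of the Dirichlet problem with its boundary values keeps it in $\boF$ and does not decrease it). Hence $u=\sup_{v\in\boF}v$ is well defined with $-n\le u\le n$.

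Next I would show $u$ solves \eqref{eq:mse} in the interior of $\Ome$. This is the standard Perron argument: at any interior point $p$, take a sequence in $\boF$ converging to $u(p)$, lift each to the solution on a small geodesic disk $D\subset\Ome$ around $p$ using the interior Dirichlet solvability for bounded data (available since geodesic disks are mean-convex, by classical results quoted in Section~\ref{sec:mse}, cf.\ \cite{Pin}), and use the compactness Theorem~\ref{th:comp} together with the strong maximum principle to conclude that the limit equals $u$ on $D$ and $u$ solves \eqref{eq:mse} near $p$. So the only real content is boundary behaviour.

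The boundary analysis splits into three cases. \textbf{(i) Finite edges (none here, all edges are complete or closed) and the $a/b$ geodesic edges:} along an edge $\gamma$ of $\Ome$ labelled $a$, I would use the standard barrier for the minimal surface equation with finite boundary data on a geodesic arc---scaled pieces of translated catenoid-type graphs, or the barriers constructed in \cite{JeSe,MaRoRo,Pin}---to show $u\to n$; similarly $u\to -n$ along $b$-edges. Since the edges have length tending to infinity, these barriers must be uniform along the whole edge, which is automatic because the geometry of $\Sigma$ along an edge sitting in a non-parabolic or cusp end is asymptotically homogeneous (translation or equidistant symmetry). \textbf{(ii) End points of edges on non-parabolic ends (the ideal vertices $p_j^i$):} here the two adjacent edges $\gamma_{j-1}^i$ and $\gamma_j^i$ carry \emph{opposite} finite data $n$ and $-n$ (labels alternate), so near such a vertex the situation is exactly that of a Scherk-type graph with finite data; the barrier is obtained from a bounded Scherk graph over a horodisk-truncated wedge, or more simply from the uniform bound $|u|\le n$ combined with continuity of $X_u$ up to the geodesic edges (Lemma~\ref{lem:bound} is not needed since data is finite, but its equicontinuity ingredient, Proposition~\ref{prop:equi2}, gives the needed control). \textbf{(iii) Cusp ends:} near a cusp vertex $p_k$ all adjacent edges belong to $C_\Ome$; since $|u|\le n$ is bounded and the cusp end is parabolic with area going to zero, $u$ extends by the symmetry/reflection trick across the truncating horocycle (as in Lemma~\ref{lem:nodiv2}), and no further boundary condition is imposed there, which is consistent with the statement (no data on cusp edges).

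The main obstacle I expect is case (i): building a \emph{global} upper barrier realising the value $n$ along an entire complete geodesic edge of infinite length, lying in a non-parabolic end of $\Sigma$, while staying below $n$ throughout $\Ome$. The point is that one needs a supersolution of \eqref{eq:mse} on $\Ome$ equal to $n$ on $A_\Ome$ and $\ge -n$ on $B_\Ome$, and one cannot just patch local barriers without controlling them at the ideal vertices where the edge meets $\partial_\infty E_i$. I would handle this by exploiting the translational (or equidistant) symmetry of the end: over a half-plane model of the end the function $n+f(\mathrm{dist}\ \mathrm{to}\ \gamma)$ with $f$ a one-variable minimal-graph profile (a hyperbolic ``scherkenoid'') gives an explicit supersolution near each $a$-edge that respects the symmetry and therefore has good behaviour all along the edge and at its ideal endpoints; taking the minimum of $n$ with such profiles from every $a$-edge and extending by $n$ elsewhere produces the required global barrier. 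Once both the upper and lower barriers are in hand, $u$ attains the prescribed finite data $\pm n$ on $A_\Ome,B_\Ome$ by the squeezing/comparison argument, and uniqueness is not asserted here (it will follow later from the flux argument of Section~\ref{sec:necessar}), so the proof of Lemma~\ref{lem:perron} is complete.
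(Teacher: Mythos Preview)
Your overall strategy---Perron method with subsolutions bounded between $-n$ and $n$, standard interior regularity, then barriers for the boundary data---is exactly what the paper does. But you have misidentified the ``main obstacle'' and thereby made the barrier step much harder than it is.

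The paper's barrier construction is entirely \emph{local}: at any boundary point $p\in\partial\Ome$, take a small geodesic half-disk $D^+\subset\Ome$ centred at $p$, and solve \eqref{eq:mse} on $D^+$ with data $0$ on $\partial D^+\cap\partial\Ome$ and $2n$ on $\partial D^+\cap\Ome$ (this is a compact Dirichlet problem on a convex half-disk). If $p\in A_\Ome$ then $n-v$ is a subsolution, so $n-v\le u\le n$ on $D^+$ and $u(p)=n$; symmetrically for $B_\Ome$. That is the whole argument. There is no need for a \emph{global} upper barrier, no need for uniformity of the barrier along an infinite edge, and no need to analyse behaviour at the ideal vertices $p_j^i$ or at cusp points---the boundary values are only asserted on the open geodesic edges, and Perron checks them pointwise.

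Your case (iii) is also based on a misreading: in an ideal domain the cusp end-points $p_k$ are vertices but carry no edges (all edges $\gamma_j^i$ lie in the non-parabolic ends $E_i$), so $\Ome$ contains the entire cusp end and there is no boundary condition to verify there. Likewise case (ii) is vacuous: nothing is prescribed at the ideal vertices themselves. So your elaborate scherkenoid/symmetry construction, while plausible, is addressing a difficulty that does not exist; the half-disk barrier already handles every point of $\partial\Ome$.
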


\begin{proof}
We apply the Perron method to solve this Dirichlet problem (see Theorem 2.12 in
\cite{GiTr} for harmonic functions). Let us recall its framework. A continuous
function $w$ on $\Ome$ is called a subsolution of \eqref{eq:mse} if, for any
bounded open subset $U\subset\subset\Ome$ with smooth boundary and any solution $v$ to
\eqref{eq:mse} on $U$, $w\le v$ on $\partial U$ implies $w\le v$ on $U$.

A continuous function $w$ on $\barre\Ome$ is called a subsolution to \eqref{eq:perron} if
it is a subsolution to \eqref{eq:mse} and $w\le n$ on $A_\Ome$ and $w\le -n$ on $B_\Ome$.
 Let $\boS$ be the set of all subsolutions to \eqref{eq:perron}. We notice that 
 $w\equiv -n\in \boS$ and any $w\in \boS$ satisfies $w\le n$. The Perron method asserts
 that the function $u$ defined on $\Ome$ by $u(p)=\sup_{w\in \boS}w(p)$ solves
 \eqref{eq:mse}.

The fact that $u$ satisfies the boundary data of \eqref{eq:perron} comes from the
existence of barriers along the boundary. They can be constructed as follows. Take a point
$p$ in $\partial\Ome$ and consider $D^+$, a geodesic half-disk contained in $\Ome$ and
centered at $p$. There exists a solution $v$ of \eqref{eq:mse} on $D^+$ with boundary data
$0$ on $\partial D^+\cap\partial\Ome$ and $2n$ on $\partial D^+\cap \Ome$. Then, if $p\in
A_\Ome$,  $n-v\le u\le n$ on $D^+$ since $n-v$ is a subsolution and then $u(p)=n$. If $p\in B_\Ome$, we have
$-n\le u\le v-n$ and $u(p)=-n$.
\end{proof}

Let $(u_n)_n$ be the sequence of solutions given by Lemma~\ref{lem:perron}. Let us prove
that, up to a subsequence, the sequence converges to a solution of Problem~\eqref{eq:jsd}. We
assume that $X_{u_n}\to X$ and the question is to understand the possible divergence
lines of $X$. Each of them are proper geodesics in $\Ome$ and, since $u_n$ is locally
constant along $\partial \Ome$, their end points must be among the vertices of $\Ome$ (Lemma~\ref{lem:nodiv2}). So
divergence lines are either closed geodesics or geodesic lines joining two vertices of
$\Ome$. 

First let us assume that we have at least one divergence line (the convergence
domain $\boB(X)$ is not the whole $\Ome$). There
are at most a finite number of divergence lines (Lemma~\ref{lem:finite}). Thus $\boB(X)$ has a finite
number of connected components. Let us define an oriented graph $G$ in the following way.
The vertices of $G$ are the connected components of $\boB(X)$. A divergence
line $\gamma$ lies in the boundary of two connected components $c_1$ and $c_2$ of
$\boB(X)$ (may be $c_1=c_2$) and, along $\gamma$, $X$ points into one of these
connected components, say $c_2$. We then define an arrow (or oriented edge) $e_\gamma$
from $c_1$ to $c_2$. $G$ is then a finite oriented graph.

\begin{lem}
$G$ has no oriented cycle.
\end{lem}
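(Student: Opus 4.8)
The plan is to argue by contradiction: suppose $G$ contains an oriented cycle $c_0 \to c_1 \to \cdots \to c_k = c_0$, realized by divergence lines $\gamma_0, \gamma_1, \dots, \gamma_{k-1}$, where $e_{\gamma_i}$ runs from $c_i$ to $c_{i+1}$, meaning $X$ points \emph{into} $c_{i+1}$ along $\gamma_i$. Pick a point $p_i$ in each connected component $c_i$ of $\boB(X)$. The crucial input is the consequence of Lemma~\ref{lem:nodiv1} recorded just after its proof: if $c_i$ and $c_{i+1}$ share a divergence line in their boundary with $X$ pointing into $c_{i+1}$ along it, then $u_n(p_{i+1}) - u_n(p_i) \to +\infty$. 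Applying this along each edge of the cycle, I would get $u_n(p_{i+1}) - u_n(p_i) \to +\infty$ for $i = 0, \dots, k-1$.

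Summing these $k$ relations telescopes: $\sum_{i=0}^{k-1} \big(u_n(p_{i+1}) - u_n(p_i)\big) = u_n(p_k) - u_n(p_0) = 0$ since $c_k = c_0$ so $p_k = p_0$. But the left side is a finite sum of quantities each tending to $+\infty$, hence tends to $+\infty$, contradicting the fact that it is identically zero. This forces the absence of any oriented cycle.

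The one point requiring care is the passage between "$X$ points into $c_{i+1}$ along $\gamma_i$" (the defining property of the arrow $e_{\gamma_i}$) and the precise hypothesis of Lemma~\ref{lem:nodiv1}, which is phrased with a half-disk $D^+$ on one side of the divergence line lying in $\boB$ and a point $q \in D^+$. For a divergence line $\gamma_i$ on the common boundary of $c_i$ and $c_{i+1}$, a small half-disk $D^+$ centered at a point $p \in \gamma_i$ on the $c_{i+1}$ side lies in $\boB$ (shrinking the radius if necessary, since $\boB$ is open and $c_{i+1}$ is a component of it), and with the outward normal convention of the lemma one has $X = -\nu$ along $\gamma_i$ on that side; Lemma~\ref{lem:nodiv1} then yields $\liminf \big(u_n(q) - u_n(p)\big) = +\infty$ for $q \in D^+ \subset c_{i+1}$. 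Combining this with the Proposition~\ref{prop:divline} statement that $u_n - u_n(q)$ converges on all of $c_{i+1}$ (so $u_n(p_{i+1}) - u_n(q)$ stays bounded) and likewise $u_n(p) - u_n(p_i)$ stays bounded on $c_i$ (here $p \in \overline{c_i}$, so one should take $p$ slightly inside $c_i$ or invoke the equicontinuity of $X_{u_n}$ near $\gamma_i$), gives the clean statement $u_n(p_{i+1}) - u_n(p_i) \to +\infty$. This is exactly the remark the authors make in the paragraph following Lemma~\ref{lem:nodiv1}, so in the write-up it suffices to cite that remark. The main (and only) obstacle is thus bookkeeping the orientation conventions correctly; the telescoping argument itself is immediate.
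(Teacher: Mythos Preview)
Your proposal is correct and follows essentially the same approach as the paper: assume an oriented cycle, pick a point in each component, use Lemma~\ref{lem:nodiv1} (via the remark following it) to see that each successive difference $u_n(p_{i+1})-u_n(p_i)\to+\infty$, and then obtain a contradiction from the telescoping sum being identically zero. The paper's own proof is just a terser version of exactly this argument, citing Lemma~\ref{lem:nodiv1} directly without spelling out the orientation bookkeeping you (correctly) work through.
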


\begin{proof}
Assume $e_{\gamma_1}\cdots e_{\gamma_k}$ is an oriented cycle in $G$. Let $c_i$ be the
initial point of $e_{\gamma_i}$; as a cycle, the edge $e_{\gamma_k}$ has endpoint $c_1$.
Let $q_i$ be a point in $c_i$. By Lemma~\ref{lem:nodiv1}, we have
$$
0=(u_n(q_1)-u_n(q_k))+\sum_{i=1}^{k-1}(u_n(q_{i+1})-u_n(q_i))\xrightarrow[n\to\infty]{}+\infty
$$
which gives a contradiction.
\end{proof}

So $G$ has a vertex $c$ where all adjacent arrows arrive. The component $c$ is an
inscribed polygonal domain $\boP$ in $\Ome$. Let $q\in \boP$ and define $w$ on $c$ as the
limit of $u_n-u_n(q)$. Since $\boB(X)\neq \Ome$ and $g$ has no oriented cycle, there is an
other vertex $c'$ in $G$ which is joined to $c$ by some edge $e_\gamma$. As a consequence
if $q'\in c'$, we have $u_n(q)-u_n(q')\to +\infty$ (Lemma~\ref{lem:nodiv1}).
Since $u_n\ge -n$, this implies
$u_n(q)+n\to +\infty$. We have then proved that $w=-\infty$ on $B_\boP$ (the
edges of $\boP$ among the $b$-edges of $\Ome$) and $X=-\nu$ along $B_\boP$
(Lemmas~\ref{lem:bound} and \ref{lem:div}).

Let $\bft\in\R^{N_\Ome}$ be large. As above, the boundary of $\boP\setminus H(\bft)$ splits
into $A_\boP(\bft)$, $B_\boP(\bft)$, $C_\boP(\bft)$ and $\Gamma_\bft$. So we can compute
$$
0=F_{u_n}(\partial (\boP\setminus H(\bft)))= F_{u_n}(A_\boP(\bft))+
F_{u_n}(B_\boP(\bft))+ F_{u_n}(C_\boP(\bft))+ F_{u_n}(\Gamma_\bft)
$$
Taking the limit $n\to \infty$ and using $X=-\nu$ along $B_\boP(\bft)$ and
$C_\boP(\bft)$ and $\|X\|\le 1$ along $A_\boP(\bft)$ and $\Gamma_\bft$, we obtain
$$
\beta(\bft)+(\gamma(\bft)-\alpha(\bft)-\beta(\bft))\le \alpha(\bft)+\ell(\Gamma_\bft)
$$
So making $\bft\to\infty$, we get $\liminf_{\bft\to\infty}\gamma(\bft)-2\alpha(\bft)\le 0$
which is impossible since we assume $\gamma-2\alpha>0$ for $\boP$. As a consequence, we
have proved that there is no divergence line and $\boB(X)=\Ome$. Let us notice that we
can do the same argument with a vertex $c$ where all adjacent arrows leave and obtain a
contradiction with the $\gamma-2\beta>0$ property.

Taking $p\in\Ome$, we define $w$ the limit of $u_n-u_n(p)$ on $\Ome$. The function $w$ has the
right boundary values. Indeed, because of the values of $u_n$ on $\partial\Ome$, we can be sure that
either $w=+\infty$ on $A_\Ome$ or $w=-\infty$ on $B_\Ome$ (Lemmas~\ref{lem:bound} and
\ref{lem:div}). We assume $w=+\infty$ on $A_\Ome$ (the other case is similar).

For $\bft$ large, we have
$$
0=F_w(\partial (\Ome\setminus H(\bft)))=F_w(A_\Ome(\bft))+F_w(B_\Ome(\bft))+
F_w(\Gamma_\bft)
$$
Let us fix some $\bft_0$ and assume that $F_w(B_\Ome(\bft_0))\ge -\beta(\bft_0)+c$ for some positive
$c$. Then for any $\bft \ge \bft_0$, using $X_w=\nu$ on $A_\Ome$ and $\|X_w\|\le 1$ on
$B_\Ome$, the above equality gives $\alpha(\bft)-\beta(\bft)\le -c+
\ell(\Gamma_\bft)$. So $\alpha(\bft)-\beta(\bft)\le -c/2<0$ for $\bft$ large.
This gives a contradiction with $\alpha=\beta$ for $\Ome$. So
$F_w(B_\Ome(\bft))= -\beta(\bft)$ for large $\bft$ and $w=-\infty$ on $B_\Ome$ (Lemma~\ref{lem:bound}).


\subsubsection{The uniqueness part}

Let $u$ and $v$ be two solutions of \eqref{eq:jsd} and assume that $u-v$ is not a constant. Let $t$ be a
regular value of $u-v$ in the range of $u-v$ and define $D=\{u-v>t\}$, we notice that
along $\partial D\cap \Ome$ which is non-empty, $X_u-X_v$ points inside $D$. Let
$\bft\in\R_+^{N_\Ome}$ be large and $\delta>0$ be small. Let $D_{\bft,\delta}$ be the set
of points inside $D \setminus H(\bft)$ and at distance $\delta$ from $\partial\Ome$. The
boundary of $D_{\bft,\delta}$ is made of three parts $\Gamma_{1,\bft,\delta}$ in $\partial
D\cap \Ome$, $\Gamma_{2,\bft,\delta}$ in $\partial H(\bft)$ and $\Gamma_{3,\bft,\delta}$
in equidistant curves to $\partial\Ome$. Notice that on $\Gamma_{3,\bft,\delta}$,
$X_u-X_v$ goes to $0$ as $\delta$ goes to $0$ since $X_u=X_v$ on $\partial\Ome$. So
integrating $\Div(X_u-X_v)$ on $D_{\bft,\delta}$, we obtain
$$
0=\int_{\Gamma_{1,\bft,\delta}}(X_u-X_v)\cdot \nu+
\int_{\Gamma_{2,\bft,\delta}}(X_u-X_v)\cdot \nu +\int_{\Gamma_{3,\bft,\delta}}(X_u-X_v)\cdot
\nu
$$
As $\delta\to 0$, the last term goes to $0$. So, with $\Gamma_{1,\bft}=\partial D\cap
(\Ome\setminus H(\bft))$, we have
$$
\int_{\Gamma_{1,\bft}}(X_u-X_v)\cdot \nu\ge -2\ell(\partial H(\bft)\cap\Ome)
$$
Letting $\bft\to\infty$, we obtain $\int_{\partial D\cap \Ome}(X_u-X_v)\cdot \nu\ge 0$
which contradicts $X_u-X_v$ points inside along $\partial D\cap \Ome$.

\begin{remarq}
In the following, a ideal domain $\Ome$ with a $a/b$ labeling that satisfies the conditions of
Theorem~\ref{th:js} will be called a Jenkins-Serrin domain. A solution $u$ to the
Jenkins-Serrin-Dirichlet problem on $\Ome$ will be called a Jenkins-Serrin solution.
\end{remarq}


\subsection{An example}\label{sec:example}

In this section, we give an example of a Jenkins-Serrin domain $\Ome$ in $\Sigma$.

Let $E_1, \dots,E_p$ be the non parabolic ends of $\Sigma$. We recall that $E_i$ is seen
as the quotient of some $C_c$ by a hyperbolic translation $T$.

Let $l$ be an even integer and $T_l$ be the hyperbolic translation such that ${T_l}^l=T$. Let $p\in
\partial_\infty E_i$. Since $C_c$ is invariant by $T_l$, $T_l$ acts on $E_i$ by isometry. Let us define
$p_j^i={T_l}^{j-1}(p)$ for $1\le j \le l$. Let $t>0$ be large and $H(t)$ be a horodisk at $p$
contained in $E_i$. We define $H_j^i(t)=T_l^{j-1}(H(t))$. There is a value $t_l$ of $t$
such that $H_j^i(t_l)$ is tangent to $H_{j+1}^i(t_l)$. Now we choose $l_i$ even such that
$H_j^i(t_{l_i})\subset E_i$.

Let us consider the ideal domain $\Ome$ whose vertices are the cusp end-points of $\Sigma$ and
the $p_j^i$ for $1\le j\le l_i$ and $1\le i\le q$ and the edges are the geodesics
joining $p_j^i$ to $p_{j+1}^i$ and passing by the tangency point between $H_j^i(t_l)$ and
$H_{j+1}^i(t_l)$.

Let us fix a $a/b$ labeling on $\partial\Ome$, then $\Ome$ is a Jenkins-Serrin domain. In
order to verify the conditions, we choose the horodisks
$H_j^i(t_{l_i})$. For this choice of $\bft$, all the edges of $\Ome$ are contained in the horodisks
so for any inscribed polygonal domain $\boP$ we have $\alpha(\bft)=0=\beta(\bft)$ and the
condition $\alpha=\beta$ is satisfied for $\boP=\Ome$. If $\boP\neq \Ome$,
$C_\boP(\bft)\neq \emptyset$ and then $\gamma(\bft)>0$ which gives $\gamma-2\alpha>0$ and
$\gamma-2\beta>0$.


\section{Construction of harmonic diffeomorphisms}\label{sec:construc}

The aim of this section is to prove the following theorem

\begin{thm}\label{th:harm}
Let $\Sigma$ be an orientable complete hyperbolic surface with finite topology. Then there is a
function $u$ defined on $\Sigma$ solution of the~\eqref{eq:mse} whose graph in $\Sigma\times\R$ has
parabolic conformal type.

 As a consequence, there is a parabolic surface $\Sigma'$ and a harmonic diffeomorphism
 $X: \Sigma'\to \Sigma$. 
\end{thm}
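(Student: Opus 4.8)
The plan is to build the function $u$ on $\Sigma$ as a limit of Jenkins-Serrin solutions on an exhausting family of ideal domains, arranged so that the resulting minimal graph has parabolic ends. First I would invoke Section~\ref{sec:example}: for each non-parabolic end $E_i$ of $\Sigma$, pick a hyperbolic translation $T_{l_i}$ rooting an even ideal polygon with vertices $p_j^i$ and a suitable $a/b$ labeling, obtaining a Jenkins-Serrin domain $\Ome$ and, by Theorem~\ref{th:js}, a Jenkins-Serrin solution $v$ on $\Ome$, unique up to an additive constant. The graph $G_v$ takes value $+\infty$ over the $a$-edges and $-\infty$ over the $b$-edges; the key point is that $\Ome$ is \emph{not} all of $\Sigma$, so I must extend $v$ to a global solution. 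To do this I would let the ideal polygon grow: replace $l_i$ by larger and larger even integers $l_i^{(m)}\to\infty$ (equivalently push the vertices $p_j^i$ out so they become dense in $\partial_\infty E_i$), getting a sequence of Jenkins-Serrin domains $\Ome_m$ with $\Ome_m$ eventually containing any compact subset of $\Sigma$, and corresponding solutions $u_m$ normalized at a fixed basepoint. Then I would apply the divergence-line machinery of Section~\ref{sec:nonexist}--style arguments (Proposition~\ref{prop:divline}, Lemmas~\ref{lem:finite}, \ref{lem:nodiv1}, \ref{lem:nodiv2}) exactly as in the existence proof of Theorem~\ref{th:js}: after passing to a subsequence $X_{u_m}\to X$, any divergence line is a proper geodesic of $\Sigma$ with endpoints at vertices of $\Ome_m$ or closed; a vertex of the associated oriented graph $G$ where all arrows arrive would force a violation of the $\gamma-2\alpha>0$ (or $\gamma-2\beta>0$) inequality, which here holds with room to spare because for the tangent-horodisk choice of $\bft$ every proper sub-polygon has $C_\boP(\bft)\neq\emptyset$. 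Hence $\boB(X)=\Ome:=\bigcup_m\Ome_m$, which I would arrange to be all of $\Sigma$ (the boundary geodesics escape to infinity), and $u_m-u_m(p)\to u$ a global solution of \eqref{eq:mse} on $\Sigma$.

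The second, and genuinely harder, task is to show the graph $S=G_u$ over $\Sigma$ is parabolic. Since $\Sigma$ has finite topology, $S$ has finitely many annular ends, one over each end of $\Sigma$, plus the compact core; parabolicity of $S$ is equivalent to parabolicity of each end. Over a cusp end of $\Sigma$ the end of $S$ is handled directly (it sits over a parabolic horoannulus and one controls $W_u$ there). The real content is the ends over the non-parabolic (hyperbolic) ends $E_i$. There, by the equidistant-curve structure of $E_i$ and the $T_{l_i}$-symmetric Scherk-type boundary behavior ($u\to\pm\infty$ alternately on the edges $\gamma_j^i$), the graph should look, over the region between consecutive edges, like a translate of a Scherk-type piece; the plan is to produce an explicit conformal annular parameter on this end by estimating the induced metric $dS^2 = W_u^2\,$ (horizontal) $+\,du^2$ and showing the resulting conformal modulus is infinite, i.e.\ the end is conformally $\{1\le|z|\}$. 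Concretely I would use the divergence-line limit $X$ itself: on each component of $\boB(X)$ the sequence converges, and the structure of $X$ near the ideal boundary of $E_i$ (coming out of Lemma~\ref{lem:nodiv2} and Proposition~\ref{prop:divline}) pins down the asymptotic geometry of $G_u$ enough to compare it with a model parabolic annulus.

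The last step is the reformulation: set $\Sigma' = S = G_u$ with its induced metric, which we have just shown is parabolic, and let $X:\Sigma'\to\Sigma$ be the vertical projection $\pi\circ(\text{inclusion})$. Since $G_u$ is a minimal graph, $\pi\circ\phi$ is harmonic for the isometric immersion $\phi$ (as recalled in the Harmonic maps subsection of Section~2), and it is a diffeomorphism because it is the vertical projection of a graph. Hence $X$ is a harmonic diffeomorphism from the parabolic surface $\Sigma'$ onto $\Sigma$, which is exactly the asserted consequence. The main obstacle, as indicated, is establishing parabolicity of the hyperbolic-type ends of $G_u$: one must turn the qualitative divergence-line description of $u$ near $\partial_\infty E_i$ into a quantitative estimate on the conformal modulus of the corresponding annular end, and this is where the bulk of the work — and the use of the curvature estimates of \cite{RoSoTo} together with the explicit Scherk-type barriers — will go.
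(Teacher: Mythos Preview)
Your proposal has a genuine gap, and it lies exactly where you yourself flag ``the main obstacle'': the parabolicity of the ends of $G_u$ over the non-parabolic ends $E_i$.

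The paper does \emph{not} prove parabolicity by an asymptotic analysis of the limit solution $u$. Instead it proceeds through an inductive construction (Propositions~\ref{prop:ext} and~\ref{prop:seq}) that you are missing. Starting from the domain of Section~\ref{sec:example}, one does not simply enlarge $l_i$; rather, at each step one takes a pair of consecutive $b$/$a$ edges $\gamma_1,\gamma_2$ and glues on two perturbed ideal rhombi $R_{1,t},R_{2,t}$ with a small parameter $t>0$. The point of the small parameter is Proposition~\ref{prop:ext}: for $t$ small the new domain $\boD_t$ is again Jenkins--Serrin \emph{and} the new Jenkins--Serrin solution $u_t$ satisfies $\|u_t-u\|_{C^2(K)}\le\eps$ on any prescribed compact $K$. (The proof of this proposition is itself a delicate divergence-line analysis on the degenerate limit $\boD_0$, showing that exactly $\gamma_1$ and $\gamma_2$ are divergence lines.) Iterating, one gets nested domains $\Ome_n$ exhausting $\Sigma$ and solutions $u_n$ with $\|u_{n+1}-u_n\|_{C^2(K_n)}\le 2^{-n}$, so the sequence is Cauchy on compacts and convergence is automatic---no subsequence, no divergence-line argument on $\Sigma$ needed.

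Parabolicity then comes for free from this $C^2$-closeness together with Proposition~\ref{prop:conftyp}: each intermediate graph $G_{u_n}$ is parabolic (shown by a quadratic area-growth estimate using that $G_{u_n}$ is area-minimizing), so at stage $n$ one can choose the new compact $K_{n+1}$ so that the graph of $u_{n+1}$ over each annulus $(K_{n+1}\setminus\inter K_n)\cap E_i$ has conformal modulus $\ge 1$; and by taking $\eps$ small enough at every later stage, this modulus stays $\ge 1$ for all $u_m$, $m\ge n+1$, hence for the limit $u$. The end of $G_u$ over $E_i$ is then a nested union of annuli each of modulus $\ge 1$, hence has infinite modulus and is parabolic.

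Your route---take $l_i^{(m)}\to\infty$, extract a subsequence via divergence lines, then estimate the conformal modulus of the limit directly---gives no control over \emph{which} solution you obtain, and nothing in your sketch prevents the limiting graph from being non-parabolic. The direct modulus estimate you allude to (Scherk-type barriers, curvature estimates) is closer in spirit to the approach of \cite{CoRo} for $\H^2$, which the paper explicitly avoids (see the remark after Proposition~\ref{prop:conftyp}); carrying it out on a surface of higher topology would be substantially harder than what the paper does. Separately, your convergence sketch is shaky: in the limit the domain is all of $\Sigma$, so divergence lines are arbitrary complete geodesics with endpoints anywhere in $\partial_\infty E_i$ (the vertices of $\Ome_m$ become dense), and the Jenkins--Serrin inequalities you invoke are conditions on $\Ome_m$, not on $\Sigma$.
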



\subsection{The conformal type}

The first point is that we can control the conformal type of the graph of a Jenkins-Serrin solution .

\begin{prop}\label{prop:conftyp}
The graph of a Jenkins-Serrin solution is parabolic.
\end{prop}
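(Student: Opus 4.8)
The strategy is to show that each annular end of the graph $G_u$ of a Jenkins-Serrin solution $u$ has infinite conformal modulus; since $G_u$ has finite topology, this implies parabolicity. Recall that an end $E_i$ of $\Sigma$ is (outside a compact set) the quotient of some $C_c$ by a hyperbolic translation, and the ideal domain $\Ome$ has $2n_i$ vertices $p_j^i$ on $\partial_\infty E_i$, with edges $\gamma_j^i$. The graph over each end is an annulus, and I want to estimate its modulus from below using the geometry of $G_u$ in $\Sigma\times\R$.

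First I would set up coordinates on an end: fix $i$, and in $E_i$ use the natural Fermi-type coordinates coming from the geodesic $\gamma$ and the translation $T$, so that $E_i$ is a strip quotiented by $T$. The graph $G_u$ restricted to this end is an annulus $\boA$, and I want to bound $\int_{\boA}\,$ of the conformal factor, or equivalently estimate the extremal length of the family of curves separating the two boundary components. The key geometric input is that $u$ blows up to $+\infty$ on the $a$-edges and $-\infty$ on the $b$-edges, so near each ideal vertex $p_j^i$ the graph behaves in a controlled way: $X_u$ extends continuously to the edges with value $\pm\nu$ by Lemma~\ref{lem:bound}. This forces $W_u=\sqrt{1+\|\nabla u\|^2}$ to be large only in a controlled region, and the area of $G_u$ over a large compact exhaustion of the end grows at most linearly (roughly like the area of the corresponding piece of $\Sigma$ plus the ``vertical'' contribution, which is controlled by the flux being bounded by lengths).

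The main estimate I would aim for: if $\Ome_R$ denotes the part of the end at distance $\le R$ from a fixed curve, then the area of $G_u$ over $\Ome_R$ grows at most polynomially in $R$ (in fact one expects linear growth in the hyperbolic-distance parameter, but even polynomial suffices), while a suitable exhaustion by curves on $G_u$ has length also controlled. Then I would invoke the standard criterion (as in Proposition~\ref{prop:huber} and \cite{Gri}): if $r/|S_r|$ is not integrable, where $S_r$ is the metric $r$-neighborhood of a compact core in $G_u$ with the induced metric, then $G_u$ is parabolic. Since $G_u$ is a minimal graph, its induced metric is complete, and the area growth bound plus the coarea formula gives exactly this. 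Alternatively, and perhaps more cleanly, I could compare the induced metric on $G_u$ with the metric on $\Sigma$ pulled back by the vertical projection plus the horizontal graphing: the projection $\pi:G_u\to\Sigma$ is distance-nonincreasing in one direction, so $d_{G_u}\ge d_\Sigma\circ\pi$, which lets me compare neighborhoods.

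The hard part will be controlling the area of $G_u$ over a neighborhood of an ideal vertex $p_j^i$ where $u\to+\infty$ on one edge and the adjacent edge (the successor) has the opposite boundary value, so $u$ oscillates wildly near the vertex. Near such a vertex the graph has a ``vertical'' portion that could a priori contribute a lot of area. The resolution should come from the fact that near an ideal vertex the two edges are asymptotic (they share the ideal endpoint), so the region of $\Ome$ near $p_j^i$ is a thin cusp-like neighborhood of exponentially small $\Sigma$-area; combined with the flux bound $|F_u(\cdot)|\le\ell(\cdot)$ controlling the vertical extent, the $G_u$-area of the graph over this cusp neighborhood is still small, in fact summable. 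I would make this precise by choosing the horodisk exhaustion $H(\bft)$ from the earlier construction: over $\Ome\cap H(\bft)$ the $\Sigma$-area goes to $0$ as $\bft\to\infty$, and the minimal-graph area is comparable (by the area-minimizing property used in the divergence-line section, comparing with a cylinder $\partial(\cdot)\times(\text{interval})$), so the total contributes a convergent tail. Putting the bulk estimate (polynomial growth away from the cusps) together with the cusp tail estimate yields the area growth bound, hence parabolicity.
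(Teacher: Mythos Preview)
Your overall strategy---prove each annular end of $G_u$ is parabolic by bounding its intrinsic area growth and applying the $\int r/|S_r|=\infty$ criterion, using that $G_u$ is area-minimizing---is exactly the paper's route. But two concrete gaps remain. First, you never treat the ends of $G_u$ lying over the hyperbolic cusp ends of $\Sigma$; the paper dispatches these separately via Corollary~\ref{cor:cusp} (the vertical projection is a harmonic diffeomorphism onto the cusp, which forces the domain annulus to have infinite modulus). Second, and more seriously, your ``main estimate'' is false as written: if $\Ome_R$ is the $\Sigma$-distance-$R$ neighborhood of a fixed curve in the end, then for $R$ large $\Ome_R$ comes right up to segments of the boundary edges where $u\to\pm\infty$, and the graph over such a region has \emph{infinite} area (it contains an asymptotically vertical wall of unbounded height). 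So you cannot bound the graph area over $\Ome_R$; the bulk/cusp-tail decomposition you propose does not fix this, since the blow-up happens along the edges in the bulk, not only near the ideal vertices.

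The repair, which is the paper's key observation, is to control the \emph{intrinsic} $r$-neighborhood on $G_u$ directly. You already note $d_{G_u}\ge d_\Sigma\circ\pi$; the missing companion fact is that the height $t$ is also $1$-Lipschitz on $G_u$ (since $|\nabla_{G_u}t|=1/W_u\le1$). Together these confine the intrinsic $r$-ball of $G_E$ to the solid box $\overline{D\setminus H(\bft+r)}\times[-M-r,M+r]$, where $M=\sup_{\partial E}|u|$. Now area-minimizing compares the graph to the boundary of this box; since $D$ has finite area and $\partial(D\setminus H(\bft+r))$ has length linear in $r$, the box boundary (hence the intrinsic $r$-ball) has area $O(r^2)$. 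This single confinement-and-compare step replaces your entire bulk/tail analysis and gives quadratic area growth directly.
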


\begin{proof}
Actually we are going to prove that each annular end is parabolic.

Let $\Ome$ and $u$ be a Jenkins-Serrin domain and $u$ a Jenkins-Serrin solution.
The annular ends of the graph $G_u$ are given by the parts of $G_u$ above the cusp ends of
$\Sigma$ and the ones above non parabolic annular ends.

The annular ends of $G_u$ above cusp ends are parabolic by Corollary~\ref{cor:cusp}.

So let us consider $E$ a non parabolic end of $\Sigma$. The curve $\partial E$ is
contained in $\Ome$ and in the homotopy class of $\partial_\infty E$ in
$\barre\Sigma^\infty$. Then $\partial E$ bounds an annular
connected component $D\in \Ome$ whose other boundary components are the edges of
$\Ome$ with end points in $\partial_\infty E$. Let $G_E$ denote the graph of $u$
above $D\cup \partial E$. We are going to use that $G_E$ is area minimizing in $\barre D\times\R$
to prove that $G_E$ has quadratic area growth. Thus the annular ends will be parabolic (see \cite{Gri}).

Let $\bft\in \R_+^N$ be large such that $\partial E\cap H(\bft)=\emptyset$. For $r>0$, let $\bft+r$ be
the $N_\Ome$-tuple with all coordinates increased by $r$. Let us notice that $D\setminus
H(\bft+r)$ contains all points in $D$ at distance less than $r$ from $\partial E$. Besides the
boundary of $D\setminus H(\bft+r)$ is made of $\partial E$, a finite number of geodesic arcs whose
lengths are bounded by $r+a_0$ for some constant $a_0>0$ and subarcs of horocycles
whose lengths go to $0$ as $r\to +\infty$.

We define $M=\sup_{\partial E}|u|$. Let $G_E(r)$ denote the part of $G_E$ contained in
$\barre{D\setminus H(\bft+r)}\times [-M-r,M+r]$. $G_E(r)$ contains all points in $G_E$ at intrinsic
distance less than $r$ from its boundary. Let $B(r)$ be the component of
$(\barre{D\setminus H(\bft+r)}\times [-M-r,M+r])\setminus G_E(r)$ contained below
$G_E(r)$. Let $S(r)=\partial B(r)\setminus G_E(r)$, $S(r)$ is a surface in $\barre D\times
\R$ with the same boundary as $G_E(r)$ so since $G_E$ is area minimizing $Area(G_E(r))\le
Area (S(r))$. To estimate the area of $S(r)$, we just say that $S(r)\subset
\partial(\barre{D\setminus H(\bft+r)}\times [-M-r,M+r])$. Since $D$ has finite area and
$\partial (D\setminus H(\bft+r))$ has linear growth, we conclude that $Area(S(r))$ has
quadratic growth. So $G_E$ has quadratic area growth and is parabolic.
\end{proof}

\begin{remarq}
The arguments used in \cite{CoRo} are different from the above ones. They give a precise description of the asymptotic behaviour of the graph of a Jenkins-Serrin solution. 
\end{remarq}


\subsection{Extension}

Here we explain how a Jenkins-Serrin domain can be "extended" to an other Jenkins-Serrin
domain such that the solutions given by Theorem~\ref{th:js} are close on the original
domain.

Let us fix a Jenkins-Serrin domain $\Ome_0$ in $\Sigma$ and let $\gamma_1$, $\gamma_2$ be two consecutive
edges of $\Ome_0$ with $\gamma_1$ labeled $b$ and $\gamma_2$ labeled $a$. The connected
component $D_i$ of $\Sigma\setminus \gamma_i$ that does not contain $\Ome_0$ is isometric to a
hyperbolic half-space. 

Let $E$ be the annular end of $\Sigma$ that contains $\gamma_1$ and $\gamma_2$. Let
$\beta_i$ be the geodesic ray contained in $E$ which is orthogonal to $\gamma_i$ and
$\partial E$. Using the disk model for $\H^2$, let $P$ be the hyperbolic
halfspace bounded by the geodesic $\gamma$ joining $-1$ to $-i$ and containing the origin
and let $\beta$ be the geodesic joining $e^{i\frac\pi4}$ to $e^{-i3\frac\pi4}$. Let
$\phi_i:P\to D_i$ be an
isometry preserving the orientation such that $\phi_i(\gamma)=\gamma_i$ and $\phi_i(\beta\cap
P)=\beta_i\cap D_i$. 

For $t\in[0,\pi/4]$, let $R_t$ be the ideal rhombus in $P$ with vertices
$1$, $ie^{it}$, $-1$ and $-i$ and $R'_t$ be the ideal rhombus with vertices $e^{-it}$,
$i$, $-1$ and $-i$. In $\Sigma$, we define $R_{1,t}=\phi_1(R_t)$ and
$R_{2,t}=\phi_2(R_t')$. We then consider the new ideal
domain $\boD_t=\Ome_0\cup (R_{1,t}\cup R_{2,t})\cup(\gamma_1\cup\gamma_2)$. The $a/b$ labeling of
$\partial\Ome_0$ induces a natural $a/b$ labeling of $\partial \boD_t$ that we consider in
the following. In order to lighten the notation, we define $\Ome=\boD_0$, $R_1=R_{1,0}$
and $R_2=R_{2,0}$ (see Figure~\ref{fig:exten}).

\begin{figure}[h]
\begin{center}
\resizebox{0.9\linewidth}{!}{\input{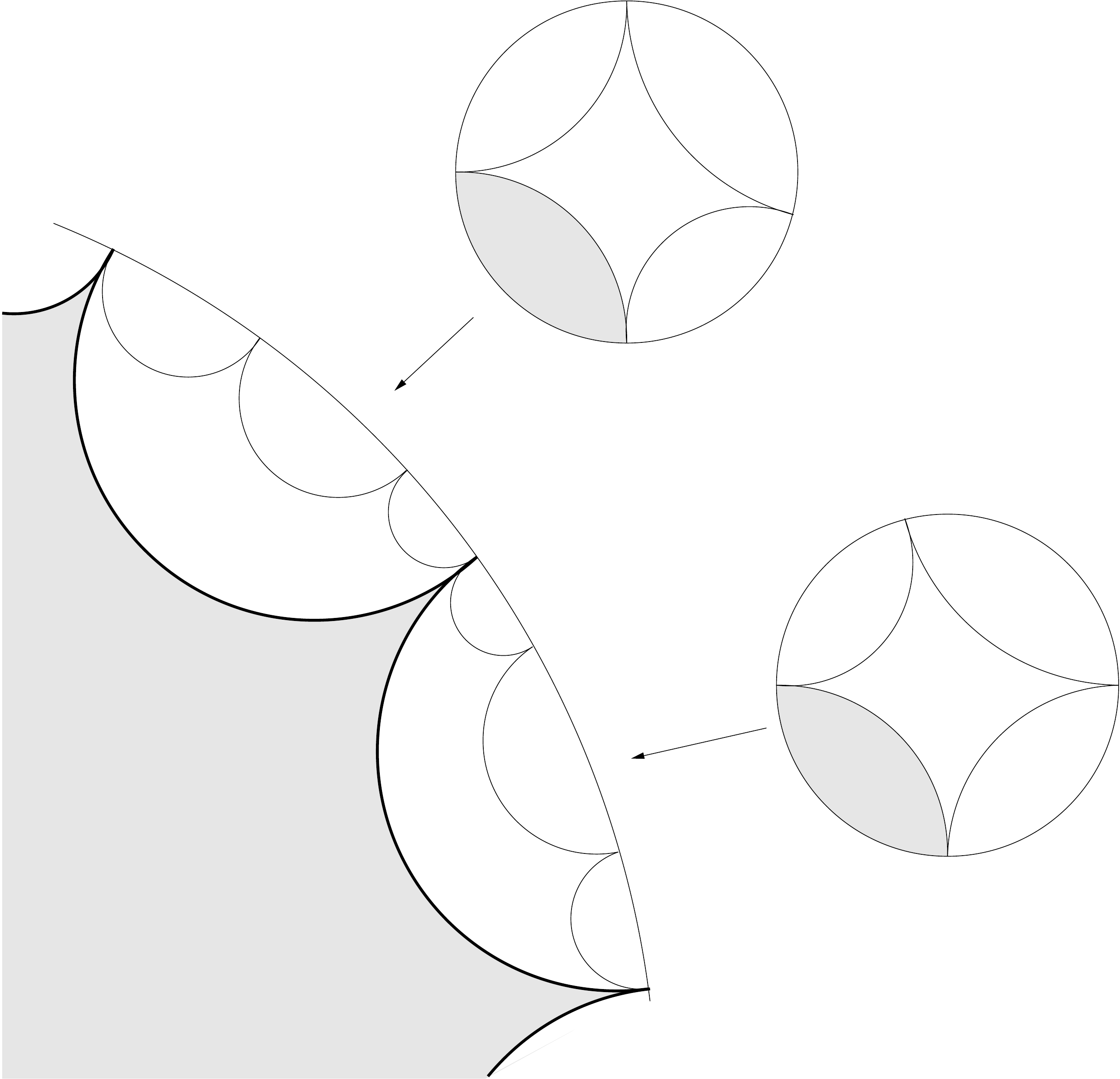_t}}
\caption{The extension of the domain $\Ome_0$ \label{fig:exten}}
\end{center}
\end{figure}

Our aim in this section is to prove the following result

\begin{prop}\label{prop:ext}
Let $\Ome_0$ be a Jenkins-Serrin domain in $\Sigma$, $p\in \Ome_0$ be a point and
$\gamma_1$, $\gamma_2$ be two consecutive edges of $\Ome_0$ with $\gamma_1$ labeled $b$
and $\gamma_2$ labeled $a$. Let $\boD_t$ be the ideal domain defined above. Then for $t>0$
small enough, $\boD_t$ is a Jenkins-Serrin domain.

Let $u$ be the Jenkins-Serrin solution on $\Ome_0$ and $u_t$ be the one on
$\boD_t$ with $u(p)=0=u_t(p)$. Let $K$ be a compact subset in $\Ome_0$ and $\eps$ be a
positive number. Then for $t$ small enough, $\|u-u_t\|_{C^2(K)}\le \eps$.
\end{prop}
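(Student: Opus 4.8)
The strategy splits into two independent parts: first, checking that $\boD_t$ is a Jenkins-Serrin domain for small $t>0$; second, establishing the $C^2$-convergence $u_t\to u$ on compact subsets of $\Ome_0$. For the first part, I would use the characterization in Theorem~\ref{th:js}: I must verify $\alpha-\beta=0$ for $\boP=\boD_t$, and $\gamma-2\alpha>0$, $\gamma-2\beta>0$ for every other inscribed polygonal domain. Since $\boD_t$ is obtained from $\Ome_0$ by adjoining the two ideal rhombi $R_{1,t}$ and $R_{2,t}$ across the edges $\gamma_1$ (labeled $b$) and $\gamma_2$ (labeled $a$), the edge $\gamma_1$ becomes interior and is replaced in $\partial\boD_t$ by two new edges coming from $R_{1,t}$; the $a/b$ labeling forces one of them to be $a$ and one $b$, and similarly for $\gamma_2$. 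By Remark~\ref{rm:alternate} it suffices to check the flux conditions on inscribed polygonal domains whose $a$-edges (resp.\ $b$-edges) alternate along the boundary. The key observation is that the change in the length functionals $\alpha(\bft),\beta(\bft),\gamma(\bft)$ caused by adding the rhombi is, for fixed large $\bft$, bounded by a quantity that tends to $0$ as $t\to 0$ (the rhombi $R_t$, $R_t'$ degenerate to the edge $\gamma$ as $t\to 0$); so each condition that was a strict inequality for $\Ome_0$ remains strict for $\boD_t$ when $t$ is small, and the equality $\alpha-\beta=0$ for $\boP=\boD_t$ is a combinatorial identity (each new geodesic edge is counted with appropriate sign, and the contributions from $R_{1,t}$ and $R_{2,t}$, one being an $a$-extension and the other a $b$-extension, cancel). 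The finiteness Lemma~\ref{lem:finite} guarantees there are only finitely many inscribed polygonal domains to worry about, so ``small enough $t$'' can be chosen uniformly.

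\textbf{The convergence part.} For the second part, the natural approach is a compactness-plus-uniqueness argument. Take any sequence $t_k\to 0$; the solutions $u_{t_k}$ on $\boD_{t_k}$ are all normalized by $u_{t_k}(p)=0$. On any fixed compact $K\subset\Ome_0$, once $t_k$ is small we have $K\subset\Ome_0\subset\boD_{t_k}$, and we would like to apply the compactness Theorem~\ref{th:comp}; but that requires a uniform bound on $u_{t_k}$, which is not obviously available since $u_{t_k}$ takes infinite boundary values. So instead I would pass to the divergence-line machinery of Section~\ref{sec:mse}: after extracting a subsequence, $X_{u_{t_k}}\to X$ locally uniformly on the limit open set (which contains $\Ome_0$), and I analyze the divergence set $\boD(X)$ inside $\Ome_0$. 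A divergence line of $X$ must be a proper geodesic whose endpoints are among the limits of the vertices of $\boD_{t_k}$, hence among the vertices of $\Ome_0$ (the vertices of the shrinking rhombi converge to vertices of $\Ome_0$ or to $\gamma_1\cap\partial_\infty,\gamma_2\cap\partial_\infty$). Then I would run exactly the same graph-theoretic argument as in the existence proof of Theorem~\ref{th:js} (build the oriented graph $G$ on the components of $\boB(X)$, show it has no oriented cycle via Lemma~\ref{lem:nodiv1}, pick a sink vertex, and derive a contradiction with the $\gamma-2\alpha>0$ condition for $\Ome_0$ itself, using that $u_{t_k}$ restricted to $\Ome_0$ has boundary values $+\infty$ on $A_{\Ome_0}$ and $-\infty$ on $B_{\Ome_0}$). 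This forces $\boB(X)=\Ome_0$, i.e.\ no divergence lines in $\Ome_0$, so $u_{t_k}-u_{t_k}(p)=u_{t_k}$ converges in $C^\infty_{\mathrm{loc}}(\Ome_0)$ to some solution $w$ of \eqref{eq:mse}. A boundary-behavior analysis (Lemmas~\ref{lem:bound} and~\ref{lem:div}, plus the flux/$\alpha=\beta$ argument from the existence proof) shows $w$ has boundary values $+\infty$ on $A_{\Ome_0}$ and $-\infty$ on $B_{\Ome_0}$, so $w$ solves the Jenkins-Serrin-Dirichlet problem on $\Ome_0$ with $w(p)=0$; by the uniqueness part of Theorem~\ref{th:js}, $w=u$. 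Since every subsequence has a further subsequence converging to $u$, the whole family converges: $\|u_t-u\|_{C^2(K)}\to 0$ as $t\to 0$.

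\textbf{Main obstacle.} The delicate point is the boundary-value identification of the limit $w$ on $\Ome_0$, i.e.\ ruling out the possibility that $u_{t_k}$ develops a divergence line sitting along (or just inside) an edge of $\Ome_0$, which would make $w$ fail to blow up on part of $A_{\Ome_0}$ or $B_{\Ome_0}$. This is precisely where the hypothesis that $\Ome_0$ is itself a Jenkins-Serrin domain is used: the strict conditions $\gamma-2\alpha>0$, $\gamma-2\beta>0$, and $\alpha-\beta=0$ for $\Ome_0$ are exactly what is needed to kill any such divergence line, via the same flux estimates as in Section~\ref{sec:necessar} and the existence proof. Once that is in place, everything else is a routine repackaging of the Section~\ref{sec:mse} tools. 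A secondary (but more mechanical) obstacle is bookkeeping the effect of the degenerating rhombi on the length functionals $\alpha,\beta,\gamma$ of the finitely many inscribed polygonal domains of $\boD_t$, to confirm that small $t$ preserves all the strict inequalities; the key quantitative input is that $\ell(R_t)\to 0$-type estimates make these perturbations uniformly small.
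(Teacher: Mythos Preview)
There is a geometric misunderstanding that undermines both parts of your proposal. You write that ``the rhombi $R_t$, $R_t'$ degenerate to the edge $\gamma$ as $t\to 0$,'' but this is false: $R_t$ is the ideal rhombus with vertices $1, ie^{it}, -1, -i$, so as $t\to 0$ it converges to the ideal \emph{square} $R_0$ with vertices $1,i,-1,-i$, not to the single edge $\gamma$. Consequently $\boD_t$ does not tend to $\Ome_0$; it tends to the strictly larger domain $\Ome := \boD_0 = \Ome_0 \cup R_1 \cup R_2 \cup \gamma_1 \cup \gamma_2$. Your perturbation scheme for the first part (``each condition that was a strict inequality for $\Ome_0$ remains strict for $\boD_t$'') therefore does not apply: the inscribed polygonal domains of $\boD_t$ are not small perturbations of those of $\Ome_0$, and there is no ``$\ell(R_t)\to 0$'' estimate. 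In fact (Lemma~\ref{lem:js} in the paper) the limit domain $\Ome$ is \emph{not} a Jenkins--Serrin domain: the four inscribed polygons $R_1$, $R_2$, $\Ome\setminus\barre R_1$, $\Ome\setminus\barre R_2$ satisfy the relevant condition only with equality. The paper's argument for the first part decomposes an arbitrary inscribed $\boP\subset\boD_t$ as $(\boP\cap\Ome_0)\cup(\boP\cap R_{1,t})\cup(\boP\cap R_{2,t})$ and reduces to the Jenkins--Serrin conditions on the three pieces; for the four critical families one checks directly that the genuine rhombus $R_t'$ with $t>0$ satisfies $\gamma-2\alpha>0$ strictly while the square $R_0'$ does not.

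The same misunderstanding breaks the second part. Your assertion that ``$u_{t_k}$ restricted to $\Ome_0$ has boundary values $+\infty$ on $A_{\Ome_0}$ and $-\infty$ on $B_{\Ome_0}$'' is false on the edges $\gamma_1$ and $\gamma_2$: these are \emph{interior} to $\boD_{t_k}$, so $u_{t_k}$ is finite and smooth across them. The essential missing step is to prove that the limit $w$ on $\Ome_0$ nevertheless takes the values $-\infty$ on $\gamma_1$ and $+\infty$ on $\gamma_2$; a flux computation in $\Ome_0$ alone yields only one scalar relation between $F_w(\gamma_1)$ and $F_w(\gamma_2)$ and cannot pin down both. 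The paper resolves this by analyzing divergence lines in the full limit domain $\Ome$: since $\Ome$ is not a Jenkins--Serrin domain there must be at least one divergence line; the oriented-graph sink/source argument then forces the sink component to be one of the four critical polygons of Lemma~\ref{lem:js}, so $\gamma_1$ or $\gamma_2$ is a divergence line with a prescribed direction of $X$; a further contradiction (if only one of them were a divergence line then $\Ome\setminus\barre R_i$ would carry a Jenkins--Serrin solution, again contradicting Lemma~\ref{lem:js}) shows that \emph{both} $\gamma_1$ and $\gamma_2$ are divergence lines, with $X$ pointing into $\Ome_0$ along $\gamma_1$ and out of $\Ome_0$ along $\gamma_2$. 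This is exactly what gives $w=-\infty$ on $\gamma_1$ and $w=+\infty$ on $\gamma_2$, after which uniqueness on $\Ome_0$ finishes the proof.
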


The first step consists in analyzing the Jenkins-Serrin conditions on $\Ome=\boD_0$.

\begin{lem}\label{lem:js}
Let $\boP$ be a polygonal domain inscribed in $\Ome$. If $\boP$ is not $R_1$,$R_2$,
$\Ome\setminus\barre R_1$ or $\Ome\setminus \barre R_2$, the Jenkins-Serrin conditions are
satisfied. For $\boP=R_1$ or $\boP=\Ome\setminus \barre R_2$, we have $\gamma-2\beta=0$ and,
for $\boP=R_2$ or $\boP=\Ome\setminus \barre R_1$, we have $\gamma-2\alpha=0$.
\end{lem}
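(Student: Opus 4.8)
The idea is to reduce the analysis of Jenkins–Serrin conditions for inscribed polygonal domains $\boP$ of $\Ome=\boD_0$ to the already-known conditions for $\Ome_0$. By construction, $\Ome$ is obtained from $\Ome_0$ by gluing the two ideal rhombi $R_1=\phi_1(R_0)$ and $R_2=\phi_2(R_0')$ across the edges $\gamma_1$ (labeled $b$) and $\gamma_2$ (labeled $a$); the new vertices are the ideal points $ie^{i\cdot 0}=i$ (via $\phi_1$) and $e^{-i\cdot 0}=1$ (via $\phi_2$), each of which lies on the non-parabolic end $E$. First I would use Remark~\ref{rm:alternate}: to verify $\gamma-2\alpha>0$ it suffices to check it for inscribed polygonal domains whose $a$-edges alternate along the boundary, and similarly $b$-edges for the $\gamma-2\beta>0$ condition. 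This drastically cuts down the case analysis, since an inscribed $\boP$ for $\Ome$ either is contained in $\Ome_0$, or uses the new vertex on one of the rhombi, or both.

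\textbf{Key steps.} (1) If $\boP\subset\Ome_0$ is inscribed in $\Ome_0$, the conditions $\gamma-2\alpha>0$, $\gamma-2\beta>0$ hold by hypothesis on $\Ome_0$; and if $\boP$ is a polygonal domain in $\Ome_0$ in the sense of Definition~\ref{def:poldom} but not inscribed (because a vertex is an endpoint of $\gamma_1$ or $\gamma_2$ rather than an ideal vertex), the second part of Remark~\ref{rem:uniform} still gives these strict inequalities. (2) For $\boP$ that genuinely reaches into one rhombus, say $\boP$ uses the new vertex of $R_1$: then one computes that $\boP=R_1$, or $\boP=\Ome\setminus\barre R_1$, or $\boP$ is obtained from an inscribed polygon of $\Ome_0$ by adding a triangle or quadrilateral inside $R_1$. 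In this last situation the new edge cutting off the added piece is a $C_\boP$-edge of positive length, so it contributes strictly to $\gamma$ while adding at most its own length (via the triangle inequality on ideal geodesics) to $\alpha$ or to $\beta$; an explicit comparison with the corresponding condition in $\Ome_0$ then yields the strict inequality. The symmetric statement holds for $R_2$. (3) The borderline cases: for $\boP=R_1$, by the explicit geometry of the rhombus $R_0$ with vertices $1,i,-1,-i$, two of its edges are $a$-edges of $\Ome$ (hence counted in $\alpha$) and two are $C_\boP$-edges, and by the symmetry of $R_0$ across the diagonal through $i$ and $-i$ one gets $\gamma(\bft)-2\beta(\bft)=0$; and $\Ome\setminus\barre R_1$ inherits the complementary identity $\gamma-2\alpha=0$ from $\alpha=\beta$ on $\Ome$. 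Likewise $R_2$, being $\phi_2(R_0')$ with $R_0'$ symmetric across the diagonal through $-1$ and $-i$ and with two $b$-edges, gives $\gamma-2\alpha=0$, and $\Ome\setminus\barre R_2$ gives $\gamma-2\beta=0$. (4) One must also confirm $\alpha-\beta=0$ for $\boP=\Ome$: since we attach one ideal rhombus across an $a$-edge and one across a $b$-edge, and the geodesic sides of each rhombus split symmetrically into one $a$-contribution and one $b$-contribution plus shared $C$-edges (which now become interior and drop out of $\partial\Ome$), the net change to $\alpha-\beta$ is zero, so $\alpha-\beta=0$ carries over from $\Ome_0$.

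\textbf{Main obstacle.} The delicate point is the combinatorial bookkeeping in step~(2): one must be sure that any inscribed polygonal domain $\boP$ of $\Ome$ which is not one of the four exceptional ones, when restricted to $\Ome_0$, either is inscribed in $\Ome_0$ or is a polygonal domain covered by Remark~\ref{rem:uniform}, \emph{and} that the extra edges inside a rhombus are genuinely in the $C_\boP$-class (never become $a$- or $b$-edges of $\Ome$) so that they strictly help the inequality. This requires carefully tracking the $a/b$ labeling induced on $\partial\boD_t$ from $\partial\Ome_0$ and using that each rhombus has exactly one $a$-side and one $b$-side among its four edges, the other two being complete geodesics inside $\Ome$. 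Once that is pinned down, the estimates are elementary lengths of ideal geodesic arcs, monotone in $\bft$ by the general discussion preceding Lemma~\ref{lem:finite}, so no analytic input beyond the triangle inequality is needed.
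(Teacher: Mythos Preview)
Your overall strategy --- cut $\boP$ into its pieces in $\Ome_0$, $R_1$, $R_2$ and exploit that $\Ome_0$ is already a Jenkins--Serrin domain --- is the paper's strategy as well. Two concrete points, however, need correction.

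\textbf{The edge count in step~(3) is wrong.} Since $\gamma_1$ is a $b$-edge of $\Ome_0$, its neighbours in $\partial\Ome_0$ are $a$-edges, so alternation on $\partial\Ome$ forces the three \emph{exterior} sides of $R_1$ to be labeled $b,a,b$. Thus $R_1$ carries one $a$-edge, two $b$-edges, and exactly one $C$-edge (the interior side $\gamma_1$); there are not ``two $a$-edges and two $C_\boP$-edges''. For the ideal square, with equal truncated side-lengths $\ell$, one gets $\alpha=\ell$, $\beta=2\ell$, $\gamma=4\ell$, hence $\gamma-2\beta=0$ and $\gamma-2\alpha>0$, which is the assertion. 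The case of $R_2$ is symmetric (labels $a,b,a$). Note also that each rhombus introduces \emph{two} new ideal vertices of $\Ome$, not one.

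\textbf{Step~(2) should use an additive decomposition, not a triangle inequality.} Your appeal to ``the triangle inequality on ideal geodesics'' does not produce the needed inequality; the piece you add inside $R_i$ may well carry $a$- or $b$-edges of $\Ome$, and comparing their lengths to the length of the separating $C$-arc is not what controls $\gamma-2\alpha$. The paper instead writes, for $\boP$ with alternating $a$-edges,
\[
\gamma(\bft)-2\alpha(\bft)=K^0(\bft)+K^1(\bft)+K^2(\bft),
\]
where $K^0$, $K^1$, $K^2$ are precisely the $\gamma-2\alpha$ quantities for $\boP\cap\Ome_0$, $\boP\cap R_1$, $\boP\cap R_2$ viewed as (not necessarily inscribed) polygonal subdomains of $\Ome_0$, $R_1$, $R_2$, after giving $\gamma_1$ the label $a$ in $R_1$ and $\gamma_2$ the label $b$ in $R_2$ so that each piece has an alternating labeling. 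Because $\Ome_0$ and the ideal squares $R_1$, $R_2$ are Jenkins--Serrin domains, Section~\ref{sec:necessar} and the second part of Remark~\ref{rem:uniform} give $K^i\ge 0$ for all three. Strictness then reduces to $K^0>0$, i.e.\ $\boP\cap\Ome_0\neq\Ome_0$. This is forced by a one-line case check under the alternation hypothesis: $\gamma_2\subset\partial\boP$ implies $\boP=R_2$, while $\gamma_1\subset\partial\boP$ implies $\boP\cap R_1=\emptyset$; hence for the non-excluded $\boP$ the set $C_\boP\cap\barre{\Ome_0}$ cannot reduce to $\gamma_1$, so $\boP\cap\Ome_0\neq\Ome_0$. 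This handles all shapes of $\boP\cap R_i$ uniformly and makes the bookkeeping you flagged as the ``main obstacle'' disappear.
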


\begin{proof} 
Let $\boP$ be a polygonal domains inscribed in $\Ome$. Since $\Ome_0$ satisfies the
Jenkins-Serrin conditions and $R_1$ and $R_2$ are isometric to ideal squares,
$\boP=R_1,R_2,\Ome\setminus \barre R_1$ or $\Ome\setminus \barre R_2$ satisfy the stated
conditions. Moreover, if $\boP=\Ome$, $\alpha-\beta=0$.

Assume now that $\boP$ is not one of these five polygonal domains. By
Remark~\ref{rm:alternate}, we assume that the $a$-components alternate along
$\partial\boP$ (the other case is similar). Let us first notice that, if $\gamma_2\subset
\partial\boP$, then $\boP=R_2$ which is excluded and, if $\gamma_1\subset \partial\boP$,
then $\boP\cap R_1=\emptyset$. Let us introduce some
notations
\begin{itemize}
\item $A_\boP^0=A_\boP\cap \partial \Ome_0$, $B_\boP^0=B_\boP\cap \partial \Ome_0$, $C_\boP^0=C_\boP\cap (\Ome_0\cup \gamma_1)$,
\item $A_\boP^i=A_\boP\cap \partial R_i$, $B_\boP^i=B_\boP\cap \partial R_i$, $C_\boP^i=C_\boP\cap R_i$,
\item $d_i=\gamma_i\cap \boP$.
\end{itemize}
For $\bft$ large, we then define
\begin{itemize}
\item $\alpha^i(\bft)=\ell(A_\boP^i\setminus H(\bft))$
 and $\gamma^i(\bft)=\ell((A_\boP^i\cup
B_\boP^i\cup C_\boP^i)\setminus H(\bft))$, for $i\in\{1,2,3\}$ and
\item $\delta^i(\bft)=\ell(d_i\setminus H(\bft))$.
\end{itemize}
We have $\alpha(\bft)=\alpha^0(\bft)+\alpha^1(\bft)+\alpha^2(\bft)$ and
$\gamma(\bft)=\gamma^0(\bft)+\gamma^1(\bft)+\gamma^2(\bft)$. So we can compute
$\gamma(\bft)-2\alpha(\bft)= K^0(\bft)+K^1(\bft)+K^2(\bft)$ where
\begin{align*}
K^0(\bft)&=\gamma^0(\bft)+\delta^1(\bft)+\delta^2(\bft)-2(\alpha^0(\bft)+\delta^2(\bft)),\\
K^1(\bft)&=\gamma^1(\bft)+\delta^1(\bft)-2(\alpha^1(\bft)+\delta^1(\bft)),\\
K^2(\bft)&=\gamma^2(\bft)+\delta^2(\bft)-2\alpha^2(\bft).
\end{align*}
Actually $K^0(\bft)$ (resp. $K^i(\bft)$) computes $\gamma-2\alpha$ for
$\boP\cap\Ome_0$ (resp. $\boP\cap R_i$) in $\Ome_0$ (resp. $R_i)$. Since $\Ome_0$, $R_1$
and $R_2$ are Jenkins-Serrin domains, these three terms are non-negative (see
Section~\ref{sec:necessar} and Remark~\ref{rem:uniform}). Moreover, since
the $a$-components alternate along $\partial\boP$ and $\boP$ is not $\Ome$, $R_1$, $R_2$,
$\Ome\setminus \barre R_1$ or $\Ome\setminus \barre R_2$, $C_\boP^0$ is not equal to $\gamma_1$. This
implies that $\boP\cap \Ome_0\neq \Ome_0$ and $K^0(\bft)>0$ for large $\bft$. Thus the
condition $\gamma-2\alpha>0$ is proved for $\boP$.
\end{proof}

The second step of the extension argument consists in proving the first statement of
Proposition~\ref{prop:ext}. We notice that the family $\{\boD_t\}_t$ is a continuous
family of ideal domains in $\Sigma$ : $\Ome=\boD_0$ is
$\bigcup_{t>0}(\textrm{interior}\bigcap_{0<s<t}\boD_s)$. 

\begin{lem}
For $t>0$ small enough, $\boD_t$ is a Jenkins-Serrin domain.
\end{lem}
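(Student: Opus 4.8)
The plan is to argue by continuity, using Lemma~\ref{lem:js} as the base case $t=0$. The family $\{\boD_t\}_{t\ge 0}$ is a continuous (in fact real-analytic) family of ideal domains with a fixed underlying topology, a fixed number of ideal vertices and a fixed area (Gauss--Bonnet), and $\boD_0=\Ome$; hence Lemma~\ref{lem:finite} gives an $L$ bounding the number of disjoint proper geodesics inscribable in $\boD_t$ \emph{uniformly} in $t$ on a small parameter interval. So the inscribed polygonal domains of $\boD_t$ fall into finitely many combinatorial types, each realized by a family $\boP_t$ whose edges move continuously with $t$. For each type the defect $\gamma(\bft)-2\alpha(\bft)$ is monotone in $\bft$, so it either tends to $+\infty$ or stabilizes to a value that is a continuous function of the finitely many horocycle-truncated edge lengths, hence of $t$; likewise for $\gamma(\bft)-2\beta(\bft)$ and for $\alpha(\bft)-\beta(\bft)$ when $\boP_t=\boD_t$. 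By Lemma~\ref{lem:js}, at $t=0$ every such defect is strictly positive except for the four critical polygons $R_1,R_2,\Ome\setminus\barre R_1,\Ome\setminus\barre R_2$, where one defect vanishes, and except for the balance $\alpha-\beta$ on $\boD_0$, which vanishes. So for $t$ small all the non-critical conditions persist, and only the four critical families and the balance on $\boD_t$ remain to be checked.

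For these I would use the additivity along the splitting $\boD_t=\Ome_0\cup R_{1,t}\cup R_{2,t}\cup(\gamma_1\cup\gamma_2)$, exactly as in the proof of Lemma~\ref{lem:js}: for any inscribed $\boP_t$ the quantity $\gamma-2\alpha$ (and $\gamma-2\beta$, and $\alpha-\beta$) decomposes as $K^0+K^1+K^2$, with $K^0$ the corresponding quantity for $\boP_t\cap\Ome_0$ inside $\Ome_0$, $K^1$ that for $\boP_t\cap R_{1,t}$ inside $R_{1,t}$, and $K^2$ likewise for $R_{2,t}$. Since $\Ome_0$ is a Jenkins--Serrin domain, $K^0\ge 0$ always, and $K^0=\alpha-\beta$ on $\Ome_0$ equals $0$. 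Running this for $\boP_t\in\{R_{1,t},R_{2,t},\boD_t\setminus\barre R_{1,t},\boD_t\setminus\barre R_{2,t}\}$ reduces everything to two statements about the single ideal quadrilaterals $R_{1,t}=\varphi_1(R_t)$ and $R_{2,t}=\varphi_2(R_t')$ (each inscribed in $\boD_t$ with its induced $a/b$ labels, the interior arc $\gamma_i$ being its unique $C$-edge): namely $\gamma-2\beta>0$ for $R_{1,t}$ and $\gamma-2\alpha>0$ for $R_{2,t}$ for small $t>0$. The balance on $\boD_t$ decomposes similarly; since $\Ome_0$'s own balance is $0$, it reduces to $(\alpha-\beta)|_{R_{1,t}}+(\alpha-\beta)|_{R_{2,t}}=0$.

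These last statements are explicit hyperbolic computations in the half-space $P$ of the disk model. I would fix the base horocycles tangent at the square $R_0$ (vertices $1,i,-1,-i$), move them rigidly with the vertices as $t$ grows, and compute the horocycle-truncated lengths of the four edges of $R_t$ (resp. $R_t'$) as real-analytic functions of $t$. At $t=0$ one recovers the perfectly balanced ideal square, so all relevant defects vanish; one then checks that the first $t$-derivative of $\gamma-2\beta$ for $R_{1,t}$ (resp. $\gamma-2\alpha$ for $R_{2,t}$) is strictly positive — this is the reason the construction moves the vertex $i$ to $ie^{it}$ in $R_t$ and $1$ to $e^{-it}$ in $R_t'$, i.e. opens the two rhombi in opposite senses. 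That same mirror symmetry between $R_t$ and $R_t'$, together with $\gamma_1$ carrying the label $b$ while $\gamma_2$ carries $a$, makes $(\alpha-\beta)|_{R_{1,t}}$ and $(\alpha-\beta)|_{R_{2,t}}$ exactly opposite, giving the cancellation and hence $\alpha=\beta$ on $\boD_t$. I expect the main obstacle to be precisely this bookkeeping: performing the truncated-length computation and pinning down the orientation and $a/b$ conventions so that the defects genuinely move in the positive direction; by contrast, the continuity of the $\bft\to\infty$ limits is a minor point, handled by their monotonicity.
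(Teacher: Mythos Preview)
Your treatment of the four critical polygons and of the balance $\alpha=\beta$ on $\boD_t$ is essentially the paper's: the additive decomposition $K^0+K^1+K^2$ and the isometry $S:R_{1,t}\to R_{2,t}$ exchanging labels are exactly what is used there (the paper checks $\gamma-2\alpha>0$ on $R'_t$ by exhibiting a choice of tangent horodisks rather than by differentiating in $t$, but your route would also work).

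The gap is in the generic case. The inference ``Lemma~\ref{lem:finite} bounds the number of disjoint inscribable geodesics, hence the inscribed polygonal domains of $\boD_t$ fall into finitely many combinatorial types'' is not valid: Lemma~\ref{lem:finite} bounds the number of edges of any \emph{single} inscribed polygon, not the number of inscribed polygons. When $\Sigma$ (hence $\Ome_0$) has nontrivial topology there can be infinitely many simple proper geodesics in $\Ome$ with endpoints among the vertices, hence infinitely many inscribed $\boP$. Your continuity argument then fails to deliver a single $t_0$ valid for all of them, because you have no uniform positive lower bound on the defects $\gamma-2\alpha$ at $t=0$ over this infinite family.

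The paper sidesteps this with a different mechanism. After disposing of the cases $\boP_t\subset\Ome_0$, $R_{1,t}\cup R_{2,t}\subset\boP_t$, and $R_{2,t}\subset\boP_t$ with $R_{1,t}\cap\boP_t=\emptyset$ (each handled so that the condition holds for \emph{every} $t>0$, not just small $t$, via the decomposition and the symmetry $S$), any remaining $\boP_t$ with alternating $a$-edges necessarily has a $C$-edge crossing $\gamma_1$ or $\gamma_2$, and therefore meets a fixed compact $K\subset\Ome_0$ in a subarc of length at least some $\eps>0$ independent of $\boP_t$. Now the paper invokes the Jenkins--Serrin \emph{solution} $u_0$ on $\Ome_0$: since $\|X_{u_0}\|\le 1-\delta$ on $K$, Remark~\ref{rem:uniform} yields $\gamma-2\alpha\ge\delta\eps$ on $\boP_0$ \emph{uniformly} over this infinite family, and only then does a continuity step produce a uniform $t_0$. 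This use of $u_0$ to manufacture the uniform lower bound is the idea your argument is missing.
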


\begin{proof}
If $\boP$ is an inscribed polygonal domain in $\boD_{t_0}$, we can actually define a
unique continuous family $\{\boP_t\}_t$ such that $\boP_t$ is an inscribed polygonal domain
in $\boD_t$ such that $\boP=\boP_{t_0}$. Assume that the $a$-edges alternate along
$\partial\boP_t$ and $\boP_t$ is not $\boD_t$, $R_{2,t}$ or $\boD_t\setminus \barre
R_{1,t}$. We have several cases to study. If $\boP_t\subset\Ome_0$ then $\boP_t=\boP_0$
and the condition $\gamma-2\alpha>0$ is satisfied. 

We notice that $R_{1,t}$ and $R_{2,t}$ are isometric through an isometry $S$ that send $\gamma_1$ to $\gamma_2$ and exchanges the labels of the edges. 
If $R_{1,t}\cup R_{2,t}\subset \boP_t$, we have $R_{1,s}\cup
R_{2,s}\subset \boP_s$ and $\boP_t\cap\Ome_0=\boP_s\cap\Ome_0$ for any $s\le t$. The
symmetry $S$ implies that the value of
$\gamma-2\alpha$ for $\boP_s$ does not depend on $s$ so $\gamma-2\alpha>0$ on $\boP_t$.

Let us assume $R_{2,t}\subset \boP_t$ and $R_{1,t}\cap \boP_t=\emptyset$. A decomposition similar
to the one in Lemma~\ref{lem:js} proof gives $\gamma(\bft)-2\alpha(\bft)=K^0(\bft)+K^2(\bft)$
where $K^0(\bft)>0$ since $\boP_t\cap\Ome_0$ does not depend on $t$ and $K^2(\bft)>0$ for
$t>0$ because $R_{2,t}\cap \boP_t=R_{2,t}$ which is isometric to $R'_t$ (see
Figure~\ref{fig:Rt}). So $\gamma-2\alpha>0$ is satisfied for
$\boP_t$.

If we are not in the cases $\boP_t\subset \Ome_0$, $R_{1,t}\cup R_{2,t}\subset \boP_t$
or $R_{2,t}\subset \boP_t$ and $R_{1,t}\cap \boP_t=\emptyset$, we can be sure that
$C_{\boP_t}$ intersects $\gamma_2$ or $\gamma_1$ since the $a$-edges alternate. More
precisely,
there is a compact subset in $\Ome_0$ (close to $\gamma_2$ and $\gamma_1$) that does not depend on the
particular $\boP_t$ such that $C_{\boP_t}\cap K$ contains a subarc of length at least
$\eps$ ($\eps$ independent of $\boP_t$). Let $u_0$ be the Jenkins-Serrin solution on
$\Ome_0$. We have $\|X_{u_0}\|\le 1-\delta$ ($\delta>0$) on $K$. So by
Remark~\ref{rem:uniform}, we have $\gamma-2\alpha\ge \delta\eps>0$ on $\boP_0$. Since the
value of $\gamma-2\alpha$ on $\boP_t$ depends continuously on $t$, we see that
$\gamma-2\alpha>\delta\eps/2$ for $\boP_t$ for any $t\le t_0$ where $t_0$ does not depend
on the particular $\boP_t$.

For $\boP=\boD_t$, $\alpha-\beta=0$ comes from the fact that $\Ome_0$ is a
Jenkins-Serrin domain and $S$ is an isometry from $R_{1,t}$ to $R_{2,t}$ exchanging the
label of the edges.

For $\boP_t=R_{2,t}$, the condition $\gamma-2\alpha$ for $t>0$ can be easily
verified on $R_t'$ so the same is true on $\boP_t=R_{2,t}$ (see Figure~\ref{fig:Rt}).

For $\boP=\boD_t\setminus \barre R_{1,t}$, the condition $\gamma-2\alpha>0$
then follows from the fact that $\Ome_0$ is a Jenkins-Serrin domain and the condition
$\gamma-2\alpha>0$ on $R'_t$.

The same argument can be done for polygonal domains with alternating $b$-edges.
\end{proof}

\begin{figure}[h]
\begin{center}
\resizebox{0.6\linewidth}{!}{\input{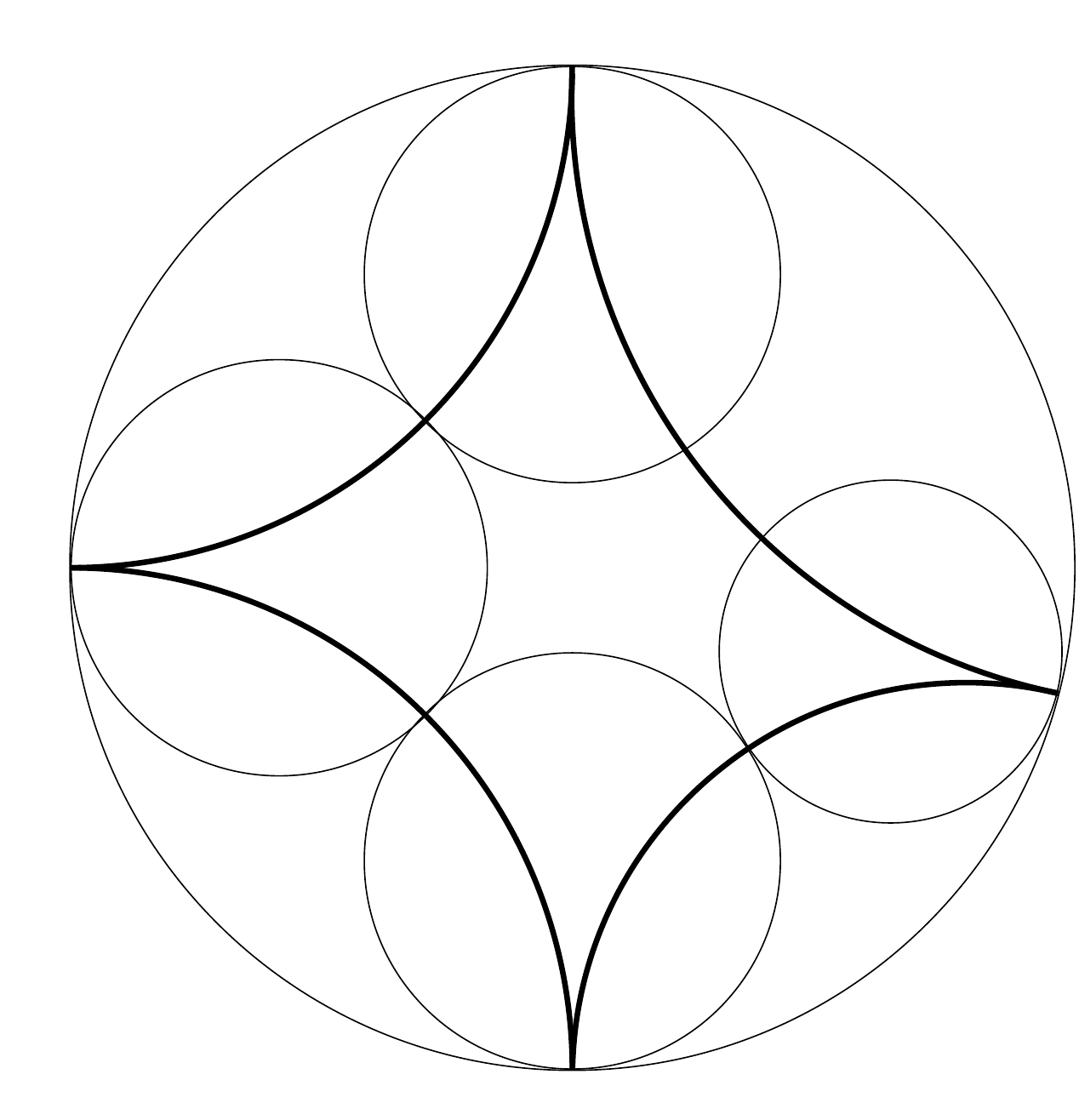_t}}
\caption{The ideal rhombus $R_t'$ with a choice of horodisks proving the $\gamma-2\alpha>0$ condition \label{fig:Rt}}
\end{center}
\end{figure}

From the above result, there is a Jenkins-Serrin solution $u_t$ on
$\boD_t$, we consider the one satisfying $u_t(p)=0$. As $t$ goes to $0$, $\boD_t$ goes to
$\Ome$ and, considering a subsequence, $X_{u_t}$ converges to some $X$ on $\Ome$. The
description of $X$ is given by the following result.

\begin{lem}
$X$ has exactly two divergence lines : the geodesic lines $\gamma_1$ and $\gamma_2$. Along
$\gamma_1$, $X$ points into $\Ome_0$ and along $\gamma_2$, $X$ points into $R_2$. Moreover, $X=\nu$ (resp. $X=-\nu$) along the $a$-boundary
components (resp. $b$-boundary components) of $\partial\Ome$.
\end{lem}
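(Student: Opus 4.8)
The plan is to analyze the convergence domain $\boB(X)$ and its divergence lines using the tools of Section~3 (divergence lines) together with the structural information from Lemma~\ref{lem:js}. First I would observe that, since the boundary values of $u_t$ are locally constant on $\partial\boD_t$ (being $\pm n$-type data or more precisely since $u_t$ is a Jenkins-Serrin solution, it takes values $\pm\infty$), every divergence line of $X$ in $\Ome$ must be a proper geodesic whose endpoints lie among the vertices of $\Ome$: interior accumulation is ruled out by the properness lemma, and endpoints on the interior of an edge are ruled out by Lemma~\ref{lem:nodiv2}. So the candidate divergence lines are the closed geodesics and the geodesic lines joining vertices of $\Ome$, i.e. exactly the "chords" appearing as boundaries of inscribed polygonal domains. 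Since the $\boD_t$ for $t>0$ are genuine Jenkins-Serrin domains, their solutions $u_t$ converge on each component of $\boB(X)$ up to vertical translation (Proposition~\ref{prop:divline}), and the organizing graph $G$ of components of $\boB(X)$ has no oriented cycle (same argument as the cycle lemma, via Lemma~\ref{lem:nodiv1}).

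Next I would run the terminal-vertex argument exactly as in the existence part of Theorem~\ref{th:js}: pick a sink $c$ of $G$, which is an inscribed polygonal domain $\boP$ of $\Ome$; along the $b$-edges of $\boP$ one gets $X=-\nu$ and $w=-\infty$ (using that $u_t\ge -n_t$ and some neighbor component sits strictly below), and then the flux balance $0=F_w(\partial(\boP\setminus H(\bft)))$ forces $\liminf \gamma(\bft)-2\alpha(\bft)\le 0$ on $\boP$. By Lemma~\ref{lem:js}, this inequality fails for every inscribed $\boP$ except possibly $\boP=R_2$ or $\boP=\Ome\setminus\barre R_1$, where $\gamma-2\alpha=0$. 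Likewise, using a source vertex and the $\gamma-2\beta$ condition, the only exceptions are $\boP=R_1$ or $\boP=\Ome\setminus\barre R_2$. Combining these, $\boB(X)$ can only be disconnected along chords that are simultaneously compatible with one of these degenerate polygons on each side; the geometry of $\Ome=\boD_0$ (Figure~\ref{fig:exten}) then forces the divergence set to consist of exactly $\gamma_1$ and $\gamma_2$, which separate $\Ome$ into the three components $\Ome_0$, $R_1$, $R_2$. One must also check $\gamma_1$ and $\gamma_2$ genuinely \emph{are} divergence lines (not just that they are the only candidates): this comes from the extension construction, since on $R_{1,t}$ and $R_{2,t}$ one side-edge has been "pinched" so $u_t$ blows up there, forcing $\|X\|=1$ on $\gamma_1,\gamma_2$ in the limit.

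To pin down the directions, I would use the flux/sign bookkeeping of Lemma~\ref{lem:nodiv1}: normalize $u_t(p)=0$ with $p\in\Ome_0$. On $R_{2,t}\cong R'_t$ the pinched edge is an $a$-edge adjacent to $\gamma_2$, so $u_t\to+\infty$ there while staying comparatively controlled near $p$, which forces (via the no-oriented-cycle ordering and Lemma~\ref{lem:nodiv1}) that crossing $\gamma_2$ from $\Ome_0$ into $R_2$ increases $u_t$, i.e. $X$ points into $R_2$ along $\gamma_2$; symmetrically the pinched edge bordering $R_{1,t}$ is a $b$-edge, so $u_t\to-\infty$ there and $X$ points into $\Ome_0$ along $\gamma_1$. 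Finally, along the edges of $\partial\Ome$ themselves, the values of $u_t$ are $+\infty$ on $a$-edges and $-\infty$ on $b$-edges (these edges persist unchanged in $\boD_t$), so Lemmas~\ref{lem:bound} and~\ref{lem:div} give $X=\nu$ on the $a$-components and $X=-\nu$ on the $b$-components of $\partial\Ome$.

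The main obstacle I anticipate is the combinatorial/geometric step of showing that $\gamma_1$ and $\gamma_2$ are the \emph{only} divergence lines: ruling out a divergence line requires that \emph{both} sides of it behave like one of the degenerate polygons from Lemma~\ref{lem:js}, and one has to be careful that a divergence line could a priori be a closed geodesic or a chord not appearing in that lemma's list, or that several chords could coexist. The clean way through is to note that Lemma~\ref{lem:js} already enumerated \emph{all} inscribed polygonal domains of $\Ome$ for which a Jenkins-Serrin inequality degenerates, so any sink/source of $G$ must be on that short list; a short case check on $\Ome=\Ome_0\cup R_1\cup R_2$ then shows the divergence set is forced to be exactly $\gamma_1\cup\gamma_2$ and that $\Ome_0,R_1,R_2$ are precisely the three components of $\boB(X)$.
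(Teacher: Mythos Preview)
Your overall architecture---constrain divergence-line endpoints via Lemma~\ref{lem:nodiv2}, build the acyclic graph $G$, and use Lemma~\ref{lem:js} to identify sinks and sources---is exactly the paper's, and the boundary statement $X=\pm\nu$ on $\partial\Ome$ is correctly obtained from Lemma~\ref{lem:nodiv2}. There is, however, a genuine gap in the step where you argue that $\gamma_1$ and $\gamma_2$ \emph{are} divergence lines and in the step where you fix the direction of $X$ along them.

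Your justification ``one side-edge has been pinched so $u_t$ blows up there, forcing $\|X\|=1$ on $\gamma_1,\gamma_2$'' does not reflect the construction: as $t\to 0$ the rhombi $R_{1,t},R_{2,t}$ converge to the ideal \emph{squares} $R_1,R_2$; no edge collapses and $u_t$ already equals $\pm\infty$ on every $a/b$ edge for all $t>0$, so nothing is ``blowing up'' in a new way at $t=0$. Likewise the direction argument built on this pinching picture is not valid. (A smaller slip: you invoke ``$u_t\ge -n_t$'' as in the existence proof of Theorem~\ref{th:js}, but here $u_t$ is a genuine Jenkins--Serrin solution and is unbounded; fortunately the flux inequality at a sink goes through directly because you already know $X=-\nu$ on $B_\boP$ from the boundary data.)

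The paper closes this gap differently and you should too. First, if $X$ had \emph{no} divergence line then (after translation) $u_t$ would converge on all of $\Ome$ to a solution with $X=\pm\nu$ on $\partial\Ome$, i.e.\ a Jenkins--Serrin solution on $\Ome$; but $\Ome$ is not a Jenkins--Serrin domain by Lemma~\ref{lem:js}, a contradiction. So there is at least one divergence line, and your sink/source analysis then forces at least one of $\gamma_1,\gamma_2$ to occur, with the direction of $X$ dictated by which polygon is the sink (resp.\ source): $X$ points \emph{into} the sink, hence into $R_2$ along $\gamma_2$ (or into $\Ome\setminus\barre R_1$ along $\gamma_1$, i.e.\ into $\Ome_0$). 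If only one of them occurred, say only $\gamma_1$, then $\boB(X)$ would have exactly the two components $R_1$ and $\Ome\setminus\barre R_1$, and on the latter $u_t$ would converge to a Jenkins--Serrin solution---impossible since Lemma~\ref{lem:js} gives $\gamma-2\alpha=0$ there. Hence both $\gamma_1$ and $\gamma_2$ are divergence lines, with the stated orientations. Finally, any further divergence line would lie in one of $\Ome_0,R_1,R_2$, and rerunning the sink/source argument on the resulting components yields an inscribed polygon violating $\gamma-2\alpha>0$ or $\gamma-2\beta>0$ inside a domain that \emph{is} Jenkins--Serrin, a contradiction.
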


\begin{proof}
First, because of the value of $X_{u_t}$ along $\partial\boD_t$, $X_{u_t}=\nu$ (resp.
$X_{u_t}=-\nu$) along the $a$-boundary components (resp. $b$-boundary components) of
$\partial\boD_t$. As a consequence $X=\nu$ (resp. $X=-\nu$) along the $a$-boundary
components (resp. $b$-boundary components) of $\partial\Ome$ (Lemma~\ref{lem:nodiv2}).

If $X$ has no divergence line, then, considering a subsequence, $u_t$ converges to $u$ a solution
of \eqref{eq:mse} on $\Ome$. Because of the value of $X$ along $\partial\Ome$, $u$ is then
a Jenkins-Serrin solution on $\Ome$ which is impossible since $\Ome$ is not
a Jenkins-Serrin domain. So $X$ must have at least one divergence line.

Moreover the value of $X$ along $\partial \Ome$ implies that the divergences lines are
either closed geodesics or proper geodesics ending at vertices of $\Ome$.

As in the proof of Theorem~\ref{th:js}, we introduce an oriented graph structure $G$
on the set of connected component of $\boB(X)$. Using the same arguments, there is an
inscribed polygonal domain
$\boP$ in
$\Ome$ which is a connected component of $\boB(X)$ where the condition $\gamma-2\alpha>0$ is not
satisfied. So $\boP=R_2$ or $\boP=\Ome\setminus \barre R_1$ by Lemma~\ref{lem:js}. There is also an inscribed
polygonal domain $\boP'$ in $\Ome$ which is a connected component of $\boB(X)$ where the
condition $\gamma-2\beta>0$ is not satisfied. So $\boP'=R_1$ or $\boP'=\Ome\setminus \barre R_2$.

This implies that at least $\gamma_1$ or $\gamma_2$ is a divergence line with the stated
value of $X$ along it. Assume only $\gamma_1$ is a divergence line (the same can be done for
$\gamma_2$). This would imply that $\boB(X)$ has only two connected components $\boP'=R_1$ and
$\boP=\Ome\setminus \barre R_1$. So a subsequence of $u_t$ converges to a solution $u$ of
\eqref{eq:mse} on $\Ome\setminus \barre R_1$. Because of the value of $X$ along $\partial
(\Ome\setminus \barre R_1)$, $u$ would be a  Jenkins-Serrin solution on
$\Ome\setminus \barre R_1$ which is impossible since $\Ome\setminus \barre R_1$ is not a
Jenkins-Serrin domain by Lemma~\ref{lem:js}. $\gamma_1$ and $\gamma_2$ are divergence lines.

The last point consists	in proving there are no other divergence lines. We notice that
$\gamma_1$ and $\gamma_2$ split the graph $G$ in three connected components. If one of
these three components contains one edge, then a similar argument will prove that $\Ome_0$,
$R_1$ or $R_2$ is not a Jenkins-Serrin domain.
\end{proof}



We can now finish the proof of Proposition~\ref{prop:ext}.

\begin{proof}[Proof of Proposition~\ref{prop:ext}]
The structure of $\boB(X)$ implies that on $\Ome_0$, a subsequence of $u_t$ converges to $v$
a solution of \eqref{eq:mse} on $\Ome_0$. Besides the value of $X$ on $\partial \Ome_0$
implies that $v$ is a Jenkins-Serrin solution on $\Ome_0$. By uniqueness of
this solution and since $u(p)=0=v(p)$ we have $u=v$ on $\Ome$. Since this limit does not
depend on the sequence, this implies that $u_t\to u$ uniformly on each compact subset of
$\Ome_0$. So $\|u-u_t\|_{C^2(K)}\le \eps$ for $t$ small enough.
\end{proof}


\subsection{The construction}

Using the preceding results, we are ready to prove TheoremÄ±~\ref{th:harm}.

\begin{prop}\label{prop:seq}
Let $\Ome$ be a Jenkins-Serrin domain. There is an increasing
sequence $(\Ome_n)_n$ of Jenkins-Serrin domains ($\Ome_0=\Ome$) and an increasing sequence
of compact subsets $(K_n)_n$ of $\Ome_n$ such the following is true. Let $u_n$ be the
Jenkins-Serrin solution on $\Ome_n$ and $E_i$ be the annular ends of $\Sigma$; we then
have
\begin{itemize}
\item $K_n\subset \Ome_n$ and $\cup_n K_n=\Sigma$,
\item $\|u_{n+1}-u_n\|_{C^2(K_n)}\le \frac1{2^n}$,
\item $(K_n\setminus \inter K_{n-1})\cap E_i$ is an annulus and
\item the graph of $u_n$ over $(K_j\setminus K_{j-1})\cap E_i$ is an annulus whose conformal modulus is
at least $1$ for any $n\ge j$.
\end{itemize}
\end{prop}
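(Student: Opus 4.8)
The plan is to build the three sequences by induction, passing from $\Ome_n$ to $\Ome_{n+1}$ by a single application of the extension procedure of Proposition~\ref{prop:ext}. Start with $\Ome_0=\Ome$. At step $n$, having $\Ome_n$, $u_n$ and $K_{n-1}$, I first choose the compact set $K_n\subset\Ome_n$ using the solution $u_n$ (see below), then I pick two consecutive edges $\gamma_1$ (labeled $b$) and $\gamma_2$ (labeled $a$) of $\Ome_n$ and set $\Ome_{n+1}=\boD_{t_{n+1}}$, with $u_{n+1}$ the Jenkins--Serrin solution on it normalized so that $u_{n+1}(p)=0$. Proposition~\ref{prop:ext} guarantees that, for $t_{n+1}>0$ small enough, $\boD_{t_{n+1}}$ is a Jenkins--Serrin domain and $\|u_{n+1}-u_n\|_{C^2(K_n)}\le\eps_{n+1}$; I take $\eps_{n+1}\le 2^{-n}$, shrinking it further as explained at the end. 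The pair of edges used at step $n$ is prescribed by a fair bookkeeping over the edges of the domains constructed so far, so that every edge of every $\Ome_m$ is eventually used as a $\gamma_1$ or $\gamma_2$ (every $b$-edge is the $\gamma_1$ of such a pair and every $a$-edge the $\gamma_2$, because the labeling alternates along $\partial\Ome_n$, so this causes no restriction).

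Since $\Ome_0$ already contains the compact core of $\Sigma$ together with a neighbourhood of each cusp end, only the non--parabolic ends $E_i$ need to be filled, and $\Sigma\setminus\overline{\Ome_0}$ is contained in a finite union of hyperbolic half--planes, one beyond each edge of $\Ome_0$ (Proposition~\ref{prop:ext} records this for the two edges it uses, and the same description holds for the others). An extension at an edge $\gamma$ with ideal endpoints $p,q$ attaches a region strictly beyond $\gamma$ and introduces two new ideal vertices strictly inside the ideal arc that $\gamma$ cuts off, so it strictly subdivides that arc; under the fair schedule every such arc is eventually subdivided, hence the ideal vertices of the $\Ome_n$ become dense in each $\partial_\infty E_i$, whence $\bigcup_n\Ome_n=\Sigma$. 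It then costs nothing to also require the $K_n$ increasing with $\bigcup_nK_n=\Sigma$ and with $(K_n\setminus\inter K_{n-1})\cap E_i$ an annulus for every $i$, simply by letting the boundary of $K_n$ run out far into $\Ome_n$ (removing ever smaller horoball neighbourhoods of the ideal vertices) while staying compact.

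The one genuinely delicate point is the modulus estimate. For each annular end $E_i$, the graph $G_{u_n}$ over the annular region $\Ome_n\cap E_i$ has \emph{infinite} conformal modulus: for a non--parabolic end this is Proposition~\ref{prop:conftyp} (its proof gives quadratic area growth there), and for a cusp end it is Corollary~\ref{cor:cusp}. Hence, just outside the fixed circle $\partial K_{n-1}\cap E_i$, this graph contains compact sub--annuli of arbitrarily large conformal modulus, and I choose $K_n$ so that $(K_n\setminus\inter K_{n-1})\cap E_i$ is precisely such an annulus, with the graph of $u_n$ over it of modulus $\ge 2$; this is compatible with taking $K_n$ large, since both demands only push $\partial K_n$ outwards. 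For $m>n$, the telescoping bound gives $\|u_m-u_n\|_{C^2(K_n)}\le\sum_{k\ge n}\eps_{k+1}$, so all the $u_m$ with $m\ge n$ are uniformly $C^2$--close on the fixed smooth compact annulus $(K_n\setminus\inter K_{n-1})\cap E_i$; since the conformal modulus of the graph over such an annulus depends continuously on the function in the $C^1$ (a fortiori $C^2$) topology, it suffices that $\sum_{k\ge n}\eps_{k+1}$ be small enough to keep that modulus above $1$. I arrange this at step $k$ by shrinking $\eps_{k+1}$ — still $\le 2^{-k}$ — to be small relative to the finitely many modulus targets already fixed for $K_1,\dots,K_k$, so that all tails $\sum_{k\ge n}\eps_{k+1}$ are as small as required, for every $n$ simultaneously. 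This yields the fourth bullet, and letting $m\to\infty$ additionally shows that the limit $u_\infty=\lim_m u_m$ — which exists in $C^2_{\mathrm{loc}}$ by the second bullet and solves \eqref{eq:mse} on $\Sigma$ because $\bigcup_n\Ome_n=\Sigma$ — has graph of modulus $\ge 1$ over each of these annuli, the fact ultimately used in Theorem~\ref{th:harm}.

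The main obstacle is exactly this compatibility: the ``$2^{-n}$'' closeness demanded in the second bullet must coexist with a modulus bound that has to survive \emph{all} subsequent perturbations (a bound on an infinite tail), and with the requirement that each $K_n$ be simultaneously large enough for $\bigcup_n K_n=\Sigma$ and carry an annular collar on which $u_n$ already has large graph--modulus. Both are handled above — the first by the freedom to take the $\eps_n$ much smaller than $2^{-n}$ together with continuity of the conformal modulus under $C^1$--perturbations, the second by Proposition~\ref{prop:conftyp} and Corollary~\ref{cor:cusp}. The remaining point, that the fair iteration of extensions exhausts $\Sigma$, is elementary once one notes that each extension strictly subdivides an ideal arc of $\partial_\infty\Sigma$.
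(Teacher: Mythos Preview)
Your approach is essentially the paper's---an inductive construction using Proposition~\ref{prop:ext} for the extension step and Proposition~\ref{prop:conftyp} for the modulus control---with one variation: you extend a \emph{single} $b/a$ pair of edges per step under a fair schedule, whereas the paper groups \emph{all} the edges of $\Ome_n$ into consecutive $b/a$ pairs and applies Proposition~\ref{prop:ext} successively to every pair before declaring the result $\Ome_{n+1}$. Extending all pairs at once makes exhaustion a one-liner via Lemma~\ref{lem:exaust}: since every edge has been pushed once in passing from $\Ome_n$ to $\Ome_{n+1}$, the distance $d(\partial E_i,\partial\Ome_n)$ grows by at least $d_{\kappa_i}$ at each step, so $d(\bar p,\partial\Ome_n)\ge n\bar d$ for a fixed $\bar d>0$.

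Your exhaustion argument, by contrast, has a genuine gap. You assert that because every ideal arc is eventually strictly subdivided, the ideal vertices become dense in $\partial_\infty E_i$; but strict subdivision alone does not imply density---iterated subdivision of $[0,1]$ that always cuts each piece within its leftmost tenth leaves $(1/2,1)$ vertex-free forever. What is needed is quantitative control on \emph{where} the new vertices land, and this is precisely the content of Lemma~\ref{lem:exaust}: each extension pushes the three replacement edges at least $d_\kappa$ further from $\partial E_i$, so under a fair schedule every edge's descendants recede to infinity and the corresponding ideal arcs shrink to points. Invoking that lemma (in place of the unjustified density claim) repairs your argument; alternatively, adopting the paper's ``extend every pair'' step removes the bookkeeping altogether. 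Your treatment of the conformal modulus---building in an initial margin $\ge 2$ and then controlling the telescoping tail by further shrinking the $\eps_k$---is if anything more careful than the paper's sketch.
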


\begin{proof}
Fix a point $\bar p\in\Sigma\setminus \cup_i E_i$, there is a constant $\bar d>0$ such the following
is true. There are sequences $\Ome_n$, $K_n$ and $u_n$ such that 
\begin{itemize}
\item $K_n\subset \Ome_n$, $K_n$ contains $\{p\in \Ome_n| d(p,\partial \Ome_n)\ge 1\}$,
$d(\bar p,\partial \Ome_n)\ge n\bar d$,
\item $\|u_{n+1}-u_n\|_{C^2(K_n)}\le \frac1{2^n}$,
\item $(K_n\setminus \inter K_{n-1})\cap E_i$ is an annulus and
\item the graph of $u_n$ over $(K_j\setminus K_{j-1})\cap E_i$ is an annulus whose conformal modulus is
at least $1$ for any $n\ge j$.
\end{itemize}

Clearly this will prove the proposition. The proof of the existence is by induction. So assume that $\Ome_j$, $K_j$ and $u_j$ are constructed for $j\le n$

Since $\Ome_n$ is a Jenkins-Serrin domain, we can gather its edges in a finite number of pairs
$\{\gamma_1^i,\gamma_2^i\}$ such that $\gamma_1^i$, $\gamma_2^i$ are consecutive edges and
$\gamma_1^i$ is labeled $b$ and $\gamma_2^i$ is labeled $a$. Let $\eps$ be positive and apply
Proposition~\ref{prop:ext} successively to the pair {$\gamma_1^i$, $\gamma_2^i$} to add
perturbed squares along these edges. We obtain a Jenkins-Serrin domain $\Ome_{n+1}$
and a solution $u_{n+1}$ such that $\|u_n-u_{n+1}\|_{C^2(K_n)}\le \eps$. Choosing $\eps$
sufficiently small, we can ensure that $\|u_n-u_{n+1}\|_{C^2(K_n)}\le \frac 1{2^n}$ and
the graph of $u_{n+1}$ over $(K_j\setminus \inter K_{j-1})\cap E_i$ is an annulus whose conformal
modulus is at least $1$ for $j\le n$.

Next we can choose a compact subset $K_{n+1}$ containing $\{p\in \Ome_{n+1}| d(p,\partial
\Ome_{n+1})\le 1\}$ such that $K_{n+1}\setminus \inter K_n$ is made of annuli in each
annular end of $\Sigma$. Moreover, by Proposition~\ref{prop:conftyp} the graph of
$u_{n+1}$ is parabolic. So $K_{n+1}$ can be chosen such that the graph of
$u_{n+1}$ over the annuli of $K_{n+1}\setminus \inter K_n$ has conformal modulus at least
$1$.

The last point we have to check is that $d(\bar p,\partial\Ome_{n+1})\ge (n+1)\bar d$. For this we just
have to analyze how the distance between $\partial E_i$ and $\partial \Ome_n$
in $E_i$ evolves. Actually Lemma~\ref{lem:exaust} (see also Figure~\ref{fig:dist}) implies that, in $E_i$, $d(\partial E_i,\partial
\Ome_n)\ge n d_{\kappa_i}$ where $\kappa_i$ is the curvature of $\partial E_i$; thus
$d(\bar p,\partial\Ome_{n+1})\ge (n+1)\bar d$ if $\bar d=\min d_{\kappa_i}$.
\end{proof}

We can now prove our main theorem.

\begin{proof}[Proof of Theorem~\ref{th:harm}]
Starting with the Jenkins-Serrin domain given in Section~\ref{sec:example}, we apply
Proposition~\ref{prop:seq} to construct $\Ome_n$, $K_n$ and $u_n$. Since $\cup_n
K_n=\Sigma$ and $\|u_n-u_{n+1}\|_{C^2(K_n)}\le \frac1{2^n}$, $u_n$ converges to a solution
$u$ of \eqref{eq:mse} on $\Sigma$, the convergence is smooth on any compact subsets of
$\Sigma$.

Since $(u_n)$ converges smoothly to $u$, the modulus of the graph of $u$ on each annular
component of $K_i\setminus\inter K_{i-1}$ is at least $1$. This implies that each annular
end of the graph of $u$ has infinite conformal modulus and is parabolic. Thus the graph of $u$ is parabolic.
\end{proof}


\appendix
\section{An equicontinuity result}

Let us fix some notations. If $p\in \R^n$, $N\in\S^{n-1}$ and $\delta>0$, we denote by $D(p,N,\delta)$ the ball in the hyperplane passing through $p$ and normal to $N$  with center $p$ and radius $\delta$. Then we denote by $C(p,N,\delta)$ the cylinder $\{q+sN, q\in D(p,N,\delta) \textrm{ and }s\in \R\}$. Finally if $S$ is a hypersurface in $\R^n$ and $p\in S$ and $N(p)$ denote the unit normal to $S$, we denote by $S(p,\delta)$ the connected component of $S\cap C(p,N(p),\delta)$ containing $p$.

We first begin by recalling a classical result (see for example, Lemma 2.4 in \cite{CoMi10} or Lemma 4.1.1 in \cite{PeRo}).

\begin{prop}\label{prop:loc}
Let $c$ and $\delta$ be positive, there is $\delta'>0$ such the following is true. Let $S$ be a hypersurface in $\R^n$ and $p\in S$ such that 
the second fundamental form of $S$ is bounded by $c$ and $d_S(p,\partial S)\ge \delta$.
Then $S(p,\delta')$ is a graph over $D(p,N(p),\delta')$.
Moreover, the function $v$ which defines this graph satisfies $v(q)\le 8c|p-q|^2$,
$|\nabla v(q)|\le 8c|p-q|$ and $|\nabla^2 v|\le 16c$ for any $q\in D(p,\delta')$.
\end{prop}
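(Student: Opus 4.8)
The plan is to establish the local graph statement by combining two standard ingredients: a quantitative version of the inverse/implicit function theorem applied to the normal graph map, and the curvature bound to control the tilt of the tangent planes over a definite scale. First I would work in coordinates adapted to $p$ and $N(p)$: write $\R^n=\R^{n-1}\times\R$ with $p$ at the origin and $N(p)$ along the last axis, so that $T_pS$ is the hyperplane $\R^{n-1}\times\{0\}$. Consider the orthogonal projection $\pi:C(p,N(p),\delta')\to D(p,N(p),\delta')$ onto this hyperplane, restricted to the connected component $S(p,\delta')$ of $S$ through $p$. The goal is to show that for a suitable $\delta'$ (depending only on $c$ and $\delta$), $\pi$ restricted to this component is a diffeomorphism onto $D(p,N(p),\delta')$, which exactly says $S(p,\delta')$ is a graph.

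The key step is the tilt estimate. Parametrize $S$ near $p$ by arclength along geodesics of $S$ emanating from $p$; since $\|A\|\le c$, the Gauss map $q\mapsto N(q)$ satisfies $|N(q)-N(p)|\le \|A\|\, d_S(p,q)\le c\, d_S(p,q)$ (the derivative of $N$ along a unit-speed curve is the shape operator applied to the velocity, of norm $\le c$). Hence on the intrinsic ball $B_S(p,\rho)$ with $\rho$ to be chosen (this ball is well inside $S$ once $\rho<\delta$), the unit normal stays within a small cone around $N(p)$, so the tangent planes are uniformly close to $T_pS$; in particular $\pi|_{B_S(p,\rho)}$ is a local diffeomorphism with uniformly bounded inverse derivative. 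Next, one must pass from the intrinsic ball to the extrinsic cylinder: choosing $\rho$ small (in terms of $c$) forces $B_S(p,\rho)$ to stay in $C(p,N(p),\delta')$ and to project onto all of $D(p,N(p),\delta')$, and conversely the connected component $S(p,\delta')$ cannot escape $B_S(p,\rho)$ because to leave it it would have to leave the cylinder first, given the small tilt. This is the step where one fixes $\delta'$ as an explicit function of $c$ and $\delta$. I expect this topological matching of intrinsic and extrinsic neighborhoods — making sure the component we call $S(p,\delta')$ is precisely the graph piece and doesn't wrap around or pinch — to be the main technical obstacle, though it is routine given the curvature bound.

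Once $S(p,\delta')$ is known to be the graph of a function $v$ over $D(p,N(p),\delta')$ with $v(0)=0$ and $\nabla v(0)=0$, the pointwise bounds follow from elementary estimates on $v$. The Hessian bound $|\nabla^2 v|\le 16c$: at a point where the tilt is controlled, the second fundamental form of the graph is $\nabla^2 v/(1+|\nabla v|^2)^{3/2}$ contracted appropriately with the normal, so $|\nabla^2 v|\le (1+|\nabla v|^2)^{3/2}\|A\|\,(\text{tilt factor})$; shrinking $\delta'$ so that $|\nabla v|\le 1$ on $D(p,N(p),\delta')$ gives $|\nabla^2 v|\le 16c$ (the constant $16$ absorbing the $(1+|\nabla v|^2)^{3/2}$ factor and the relation between the normal component of $\nabla^2 v$ and its full norm). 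Then $|\nabla v(q)|\le 16c|q|$ by integrating the Hessian bound from the origin — actually $\le 16c|p-q|$, and one tightens the constant to $8c$ by noting $|\nabla v|$ is small so the relevant linearization is accurate — and $v(q)\le 8c|p-q|^2$ by integrating $|\nabla v|$ once more. These are the routine calculations I would not grind through; the only real content is the quantitative graph statement, for which the curvature-controlled Gauss map estimate is the heart of the matter. I would close by remarking that the precise numerical constants ($8c$, $16c$, $\delta'$) are not important for our applications — only their dependence solely on $c$ and $\delta$ — and cite \cite{CoMi10,PeRo} for a fully detailed treatment.
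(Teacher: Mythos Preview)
Your sketch is sound and follows the standard argument found in the references \cite{CoMi10,PeRo}. Note, however, that the paper does not actually supply a proof of this proposition: it is stated as a classical result with a pointer to Lemma~2.4 in \cite{CoMi10} and Lemma~4.1.1 in \cite{PeRo}, and no further argument is given. So there is nothing to compare your proof against beyond those external sources, and your outline --- control the Gauss map via $\|A\|\le c$ to get a tilt estimate, use this to show the projection onto $T_pS$ is a diffeomorphism over a disk of radius $\delta'(c,\delta)$, then read off the bounds on $v$, $\nabla v$, $\nabla^2 v$ by integrating --- is exactly the approach taken there.
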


A consequence of this local description is the following result.

\begin{prop}\label{prop:equi1}
Let $U\subset \Ome$ be two open subsets of $\R^{n-1}$ and $c$, $\delta$ be positive. Let
$\boS$ be a set of smooth functions on $\Ome$ such that, for any $p\in U$ and $u\in \boS$,
the following is true
\begin{itemize}
\item $d_{G_u}((p,u(p)),\partial G_u)\ge \delta$ and
\item the second fundamental form of $G_u$ is bounded by $c$ on the geodesic disk of
radius $\delta$ and center $(p,u(p))$. 
\end{itemize}
Then the family $\{X_u : U\to \R^{n-1}, u\in\boS\}$ is  uniformly equicontinuous where $U$ is endowed
with the geodesic metric.
\end{prop}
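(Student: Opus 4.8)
The plan is to deduce equicontinuity of the Gauss-map functions $X_u$ from the uniform local graph description in Proposition~\ref{prop:loc}. First I would fix $p\in U$ and, for each $u\in\boS$, apply Proposition~\ref{prop:loc} to the hypersurface $S=G_u\subset\R^n$ at the point $P=(p,u(p))$: the hypotheses on $\boS$ guarantee a uniform second-fundamental-form bound $c$ and a uniform distance-to-boundary bound $\delta$, so there is a $\delta'>0$, \emph{depending only on} $c$ and $\delta$ and not on $u$ or $p$, such that $G_u(P,\delta')$ is the graph of a function $v=v_{u,p}$ over the disk $D(P,N_u(P),\delta')$ in the hyperplane $N_u(P)^\perp$, with $|v|\le 8c|\cdot-P|^2$, $|\nabla v|\le 8c|\cdot - P|$, and $|\nabla^2 v|\le 16 c$. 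In particular the Gauss map of $G_u$ restricted to this piece is a uniformly Lipschitz function of the base point in $N_u(P)^\perp$, with Lipschitz constant controlled by $16c(1+ (8c\delta')^2)^{1/2}$ (coming from $\nabla^2 v$ and the gradient bound), again uniformly in $u$ and $p$.

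The second step is to translate a small displacement of $p$ in $U$ (with its geodesic metric) into a small displacement of the corresponding point of $G_u$ and then into a small change of $N_u$, hence of $X_u$. Concretely, pick $q\in U$ close to $p$ in the geodesic distance of $U$; then $(q,u(q))$ lies on $G_u$ at intrinsic distance from $(p,u(p))$ bounded by a fixed multiple of $d_U(p,q)$ (since the graph map is distance-nonincreasing in the base and the relevant piece is a uniform graph, the intrinsic distance is comparable to the base distance). For $d_U(p,q)$ smaller than a uniform threshold, $(q,u(q))$ stays inside the cylinder $C(P,N_u(P),\delta')$, so it is one of the graph points $(y,v(y))$ with $|y-\pi_{N_u(P)^\perp}(P)|$ controlled by $d_U(p,q)$. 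The unit normal at that point is $N_u(q,u(q))=(-\nabla v(y),1)/\sqrt{1+|\nabla v(y)|^2}$ in the frame adapted to $N_u(P)$, and by the $\nabla^2 v$ bound this differs from $N_u(P)=(0,\dots,0,1)$ by at most a uniform constant times $|y-\pi(P)|\le$ const$\cdot d_U(p,q)$. Since $X_u$ is the orthogonal projection of $-N_u$ onto the horizontal $\R^{n-1}$ (or, equivalently, is a fixed smooth function of $N_u$ away from the vertical direction, and here $N_u$ is uniformly bounded away from horizontal because $|\nabla v|\le 8c\delta'$), we get $|X_u(p)-X_u(q)|\le \omega(d_U(p,q))$ with a modulus $\omega$ depending only on $c$, $\delta$, and $\delta'$, which is exactly uniform equicontinuity.

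For the third step one must handle points $q$ that are close to $p$ in $U$ but whose image $(q,u(q))$ might a priori leave the cylinder $C(P,N_u(P),\delta')$; this is where the only real care is needed. The point is that the uniform gradient bound $|\nabla v|\le 8c\delta'$ on the graph piece forces $G_u(P,\delta')$ to project onto a definite horizontal neighbourhood of $p$ in $\R^{n-1}$: shrinking $\delta'$ if necessary (still depending only on $c,\delta$), the horizontal projection of $G_u(P,\delta')$ contains a Euclidean ball $B(p,\rho)\subset\R^{n-1}$ with $\rho=\rho(c,\delta)>0$. Then for $d_U(p,q)<\rho$ we are guaranteed that $(q,u(q))\in G_u(P,\delta')$, because $q\in B(p,\rho)$ and $G_u$ is a graph over $U$, so there is only one point of $G_u$ over $q$ and it must be the graph point produced by Proposition~\ref{prop:loc}. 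I expect this bookkeeping — making sure the ``local graph over the tilted hyperplane'' and the ``global graph over $U$'' descriptions refer to the same sheet, with all thresholds uniform — to be the main (though routine) obstacle; everything else is the elementary estimate $|N_u(P)-N_u(q,u(q))|\lesssim |\nabla^2 v|\cdot(\text{base displacement})$ from Proposition~\ref{prop:loc}.
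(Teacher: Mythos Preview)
Your direct approach has a real gap in the case where $|\nabla u|$ is not uniformly bounded (and nothing in the hypotheses gives such a bound). In Step~2 you assert that the intrinsic distance on $G_u$ between $(p,u(p))$ and $(q,u(q))$ is controlled by $d_U(p,q)$; but the intrinsic distance is $\int\sqrt{1+|\nabla u|^2}$ along a path over the base, which blows up with $|\nabla u|$. Your attempted fix in Step~3---that the horizontal projection of $G_u(P,\delta')$ contains a ball $B(p,\rho)$ with $\rho$ depending only on $c,\delta$---is false: already for $n=2$ and $u(x)=Mx$ (zero curvature, so any $c$ works) the piece $G_u(P,\delta')$ is a segment of length $2\delta'$ on the line $y=Mx$, and its horizontal projection has length $2\delta'/\sqrt{1+M^2}\to 0$ as $M\to\infty$. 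So when $N_u(P)$ is nearly horizontal you cannot guarantee that $(q,u(q))$ lies in the controlled patch $G_u(P,\delta')$, and the local Hessian estimate from Proposition~\ref{prop:loc} gives you nothing about $X_u(q)$.

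The paper's proof sidesteps this by a contradiction argument that uses the \emph{global} graph property, not just the local curvature bound. If equicontinuity failed, one would find $p_{1,n},p_{2,n}\in U$ with $d_U(p_{1,n},p_{2,n})\to 0$ but $N_{u_n}(p_{i,n})\to N_i$ with $N_1\neq N_2$. The two local patches $G_{u_n}(P_{i,n},\delta'/2)$, after vertical translation by $-u_n(p_{i,n})$, converge to disks through the origin with distinct normals $N_1,N_2$ (angle at most $\pi/2$ after a standard reduction); these limit disks intersect transversally, hence so do the translated patches for large $n$. But both are vertical translates of the \emph{same} graph $G_{u_n}$, so at any intersection point they share the same horizontal coordinate and therefore the same unit normal---they can only meet tangentially. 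That contradiction is where the graph hypothesis does its work; your quantitative Step~3 tries to replace it and fails precisely in the steep regime.
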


Let us recall that the geodesic distance $d_\gamma$ between two points in $U$ is given by the infimum
of the length of curves in $U$ joining the two points. In an open set, $d_\gamma$ induces
the usual topology.

\begin{proof}
Proving $X_u$ is uniformly equicontinuous is the same as proving $N_u$ is uniformly
equicontinuous. So if $\{N_u : U\to \S^{n-1}, u\in\boS\}$ is not uniformly equicontinous,
it means that we have two sequences $(p_{1,n})_n$ and $(p_{2,n})_n$ in $U$ and a sequence
$u_n$ in $\boS$ such that $d_\gamma(p_{1,n},p_{2,n})\to 0$, $N_{u_n}(p_{1,n})\to
N_1$ and $N_{u_n}(p_{2,n})\to N_2$ with $N_1\neq N_2$. Moreover, by changing
the point $p_{2,n}$ by a point along a curve of length at most
$d_{\gamma}(p_{1,n},p_{2,n})+\frac1n$ between $p_{1,n}$ and $p_{2,n}$, we can assume
$d_{\S^{n-1}}(N_{u_n}(p_{1,n}),N_{u_n}(p_{2,n}))\le \pi/2$ and so
$\alpha=d_{\S^{n-1}}(N_1,N_2)\le \pi/2$. By Proposition~\ref{prop:loc}, there is $\delta'$
such that on $G_{u_n}(p_{1,n},\delta')$ the unit normal is at distance less that
$\alpha/3$ from $N_{u_n}(p_n)$. Moreover, the translate
$G_{u_n}(p_{1,n},\delta'/2)-p_{1,n}-u_n(p_{1,n})\partial_{x_n}$ converges (after taking a
subsequence) in $C^1$ topology to a graph over $D(0, N_1,\delta'/2)$ along which the unit
normal is at distance less than $\alpha/3$ from $N_1$. By the same argument,
$G_{u_n}(p_{2,n},\delta'/2)-p_{2,n}-u_n(p_{2,n})\partial_{x_n}$ converges in $C^1$
topology to a graph over $D(0, N_2,\delta'/2)$ along which the unit normal is at distance
less than $\alpha/3$ from $N_2$. Since $0<d_{\S^{n-1}}(N_1,N_2)<\pi/2$, these two limit
graphs intersect and are transverse. Thus
$G_{u_n}(p_{1,n},\delta'/2)-u_n(p_{1,n})\partial_{x_n}$ and
$G_{u_n}(p_{2,n},\delta'/2)-u_n(p_{2,n})\partial_{x_n}$ must intersect and be transverse
for $n$ large. This is impossible since at an intersection point the normals have to be the
same; indeed, these two surfaces are vertical translates of the same graph.
\end{proof}

In this paper, this has the following consequence.

\begin{prop}\label{prop:equi2}
Let $U\subset\Ome\subset \Sigma$ be two open subsets of a Riemannian surface ($U$ with compact closure). Let $\delta$
be positive. Let $\boS$ be a set of solutions of \eqref{eq:mse} on $\Ome$ such that for any
$p\in U$ and $u\in \boS$, $d_{G_u}((p,u(p)),\partial G_u)\ge \delta$. Then the family
$\{X_u : U\to T\Sigma, u\in\boS\}$ is equicontinuous.
\end{prop}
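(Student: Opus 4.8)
\textbf{Proof proposal for Proposition~\ref{prop:equi2}.}

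The plan is to deduce this from Proposition~\ref{prop:equi1} by a localization argument, the point being that Proposition~\ref{prop:equi1} is stated for Euclidean space $\R^n$ while here $\Sigma$ is an abstract Riemannian surface, and that Proposition~\ref{prop:equi1} requires a uniform bound on the second fundamental form of the graphs, which must be supplied. So there are two things to check: a uniform curvature bound for the graphs $G_u$ near the relevant points, and a way of passing to normal coordinates so that the Euclidean statement applies.

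First I would establish the curvature bound. Since $U$ has compact closure in $\Sigma$, there is $\delta_0>0$ with $\delta_0\le\delta$ so that every geodesic disk of radius $\delta_0$ in $\Sigma\times\R$ centered at a point of $\barre U\times\R$ is contained in a fixed compact region of $\Sigma\times\R$ (using that the metric on $\Sigma\times\R$ near $\barre U\times\R$ has bounded geometry, since we may shrink to a slightly larger relatively compact open set). For $u\in\boS$ and $p\in U$, the hypothesis $d_{G_u}((p,u(p)),\partial G_u)\ge\delta\ge\delta_0$ means $G_u$ contains the intrinsic geodesic disk of radius $\delta_0$ about $(p,u(p))$; by the curvature estimates for minimal graphs in \cite{RoSoTo} (a Schoen-type estimate, already invoked in the proof of Proposition~\ref{prop:divline} and in the properness lemma for divergence lines), the second fundamental form of $G_u$ is bounded by a constant $c=c(U,\Sigma,\delta_0)$ on the geodesic disk of radius $\delta_0/2$ about $(p,u(p))$. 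This is the uniform bound needed to feed into Proposition~\ref{prop:equi1}; I expect this step to be the main obstacle only in the sense of correctly quoting and applying the interior curvature estimate, since everything else is soft.

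Next I would localize. Fix $p_0\in U$; choose a geodesic coordinate chart of $\Sigma$ around $p_0$ on a ball $B$ of radius $\rho$, small enough that in these coordinates the metric is $C^2$-close to the Euclidean one, and take the product chart on $B\times\R\subset\Sigma\times\R$. Crucially, for $\rho$ chosen uniformly small (depending on $U$, $\Sigma$, $\delta_0$, $c$), the portion of $G_u$ over $B(p_0,\rho/2)$ sits inside this chart (by the curvature bound, the graph cannot escape in the vertical direction over such a small horizontal ball more than a controlled amount, using Proposition~\ref{prop:loc}), so $G_u$ becomes a graph of a smooth function $\tilde u$ over a Euclidean domain, and its Euclidean second fundamental form is bounded by $2c$ say, once $\rho$ is small enough that the ambient metric differs from Euclidean by less than a fixed small amount on the chart. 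Likewise $d_{G_{\tilde u}}$ and the vectorfield $X_{\tilde u}$ in the Euclidean picture are comparable to their Riemannian counterparts up to a fixed constant. Applying Proposition~\ref{prop:equi1} to the family $\{\tilde u\}$ on this fixed Euclidean chart gives uniform equicontinuity of $\{X_{\tilde u}\}$ near $p_0$, hence uniform equicontinuity of $\{X_u\}$ near $p_0$ in the Riemannian metric. Since $\barre U$ is compact it is covered by finitely many such neighborhoods, and patching gives equicontinuity of $\{X_u:U\to T\Sigma\}$ on all of $U$. (One should remember that $X_u$ takes values in $T\Sigma$, so ``equicontinuous'' is understood by trivializing $T\Sigma$ over each chart; the finite cover makes this harmless.) $\qed$
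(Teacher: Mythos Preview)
Your proposal is correct and follows essentially the same route as the paper: invoke the curvature estimates of \cite{RoSoTo} from the hypothesis $d_{G_u}((p,u(p)),\partial G_u)\ge\delta$ to get a uniform bound on the second fundamental form, pass to a local chart around each $p\in U$, and apply Proposition~\ref{prop:equi1} there. The paper's own proof is much terser (a single paragraph), and in particular it does not spell out your vertical-containment concern---indeed this is unnecessary, since the product chart is $B\times\R$ and $u\circ\phi$ is automatically a graph over the Euclidean domain---but the skeleton is identical.
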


\begin{proof}
Let $p\in U$ and consider a local chart $\phi:V\in \R^2\to U$ around $p$. Because of the hypothesis
$d_{G_u}((p,u(p)),\partial G_u)\ge \delta$, curvature estimates for stable minimal
surfaces \cite{RoSoTo} apply to prove that $G_u$ has uniformly bounded second fundamental form near
$(p,u(p))$ in $\Sigma\times\R$. This implies that, in $\R^2$, the family $\{u\circ \phi,
u\in \boS\}$ satisfies the hypotheses of Proposition~\ref{prop:equi1} for some open set
$W\subset V$ ($\phi^{-1}(p)\in W$). This implies the equicontinuity of $X_{u\circ\phi}$ at
$\phi^{-1}(p)$ and then the one of $X_u$ at $p$.
\end{proof}

In the above proposition, if $U\subset\subset \Ome$, the property
$d_{G_u}((p,u(p)),\partial G_u)\ge \delta$ is satisfied, so $\{X_u : \Ome \to T\Sigma\}$
is equicontinuous.


\section{A technical lemma}

In this section, we prove the following result (see Figure~\ref{fig:dist}).

\begin{lem}\label{lem:exaust}
Let $\kappa$ be in $(0,1)$ then there is a positive constant $d_\kappa$ such that the
following is true. Let $c$ be a complete curve of constant curvature $\kappa$ in $\H^2$
and $\gamma$ be a complete geodesic contained in the non-meanconvex side of $c$. Let
$\gamma'$ be the unique geodesic orthogonal to $\gamma$ and $c$. In the halfplane bounded
by $\gamma$ that
does not contain $c$, there are two uniquely determined geodesic rays $\gamma_1$ and
$\gamma_2$ starting from a common point in $p\in\gamma'$ and ending at the endpoints of
$\gamma$ that are orthogonal at $p$. Then we have
$$
d(c,\gamma_1\cup\gamma_2)\ge d(c,\gamma)+d_\kappa.
$$
\end{lem}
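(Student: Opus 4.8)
The plan is to reduce everything to an explicit computation in the upper half-plane model of $\H^2$, using the symmetry of the configuration to cut the work in half. First I would set up coordinates: place the geodesic $\gamma$ as a vertical line (or the unit semicircle centered at $0$) and exploit the reflection symmetry across $\gamma'$, so that $\gamma_1$ and $\gamma_2$ are mirror images and it suffices to understand the distance from $c$ to $\gamma_1$ alone. The curve $c$ of constant curvature $\kappa$ is an equidistant curve to $\gamma$, at hyperbolic distance $d_0 := d(c,\gamma) = \argth(\kappa)$ (up to the standard normalization), lying on the far side; call this distance $d_0$. The point $p\in\gamma'$ is the foot on $\gamma$ of the common perpendicular, so $d(c,p) = d_0$ as well since $\gamma'\perp\gamma$ meets $\gamma$ at $p$. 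Then I would parametrize $\gamma_1$ by arclength starting at $p$ and compute $d(c,\gamma_1(s))$ as a function of $s$; its infimum over $s\ge 0$ is $d(c,\gamma_1)$, and the claim is that this infimum exceeds $d_0$ by a definite amount $d_\kappa$ depending only on $\kappa$.

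The key geometric input is that $\gamma_1$ is a geodesic ray asymptotic to one ideal endpoint of $\gamma$ and orthogonal to $\gamma$ at $p$: so $\gamma_1$ starts on $\gamma$, moves away into the far half-plane, but bends back to land on $\partial_\infty\H^2$ at an endpoint of $\gamma$. Because $c$ is the equidistant curve at distance $d_0$ on that far side, and because $\gamma_1$ is a complete geodesic with both "ends" approaching $\gamma\cup\partial_\infty$, the ray $\gamma_1$ cannot recede arbitrarily far from $\gamma$; in fact $\sup_{s}d(\gamma_1(s),\gamma)$ is some finite value $h(d_0)$ strictly between $0$ and $d_0$. The crucial point is the strict inequality $h(d_0) < d_0$: the ray $\gamma_1$ stays strictly inside the slab between $\gamma$ and $c$ except that it only touches $\gamma$ at the single point $p$. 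I would make this quantitative by an explicit trigonometric computation. Using the right triangle with vertex at $p$ (right angle there, since $\gamma_1\perp\gamma$), one ideal vertex, and the relevant hyperbolic trig identities, one finds a closed-form expression for $d(c,\gamma_1)$ in terms of $d_0$, something of the form $d(c,\gamma_1) = d_0 + f(d_0)$ with $f(d_0) > 0$ for all $d_0 > 0$, and then set $d_\kappa = \inf\{f(d_0)\}$ — but since $d_0$ is determined by $\kappa$ there is actually a single value $f(\argth\kappa)$, and one just checks it is positive.

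The main obstacle I anticipate is not the existence of the estimate but getting a clean, honestly-justified value: one must be careful that $d(c,\gamma_1\cup\gamma_2)$ really is achieved away from $p$ (the distance from $c$ to $p$ itself is exactly $d_0$, so if the infimum over the rays were attained at the endpoint $p$ we would get equality, not strict inequality). So the heart of the argument is a first-order analysis near $p$: showing that as one moves along $\gamma_1$ away from $p$, the distance to $c$ initially \emph{increases}. This follows because at $p$ the ray $\gamma_1$ is orthogonal to $\gamma$, hence orthogonal to the foliation by equidistant curves to $\gamma$ (of which $c$ is a leaf), so the derivative of $d(\gamma_1(s),c)$ at $s=0$ is $+1$; thus $d(\gamma_1(s),c) = d_0 + s + O(s^2)$ near $p$, and in particular the infimum over the whole ray is a genuine interior minimum bounded below by a positive-definite expression in $\kappa$. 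I would then combine: the function $s\mapsto d(\gamma_1(s),c)$ starts at $d_0$ with slope $1$, and since $\gamma_1$ limits to an ideal point of $\gamma$ (where the distance to $c$ returns toward $d_0$ from above — it stays $>d_0$ because $\gamma_1$ stays on the $\gamma$-side of $c$), its minimum over $(0,\infty)$ is a well-defined positive quantity; an explicit hyperbolic-geometry computation (comparing the two perpendicular geodesics $\gamma'$ and $\gamma_1$ from points of $\gamma$, both meeting $c$) then pins down $d_\kappa$ and shows it depends only on $\kappa$, being uniform as stated. The routine part is carrying out this trigonometry; the conceptual part is the orthogonality-at-$p$ observation that rules out the degenerate case and yields strictness.
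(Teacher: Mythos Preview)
Your proposal rests on a misreading of the configuration. You place $p$ on $\gamma$ (at the foot $\gamma\cap\gamma'$) and take $\gamma_1$ to be orthogonal to $\gamma$ there. But the statement says the rays $\gamma_1,\gamma_2$ lie in the open half-plane bounded by $\gamma$ that does \emph{not} contain $c$, start from a common point $p\in\gamma'$ in that half-plane, are asymptotic to the two ideal endpoints of $\gamma$, and are orthogonal \emph{to each other} at $p$. Thus $p$ is strictly on the far side of $\gamma$ from $c$, and neither ray meets $\gamma$ (they only share its points at infinity). Your worry that the infimum might be attained at $p$ with value $d_0$ is therefore based on a false premise: in fact $d(c,p)>d(c,\gamma)$ automatically. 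A second error is the claim $d(c,\gamma)=\argth\kappa$: the geodesic $\gamma$ is an \emph{arbitrary} geodesic on the non-mean-convex side of $c$, so $d(c,\gamma)$ is not determined by $\kappa$; the whole content of the lemma is that the gain $d_\kappa$ is uniform over all such $\gamma$. Your computation never addresses this uniformity, since you have collapsed the free parameter $d(c,\gamma)$ into $\kappa$.

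Once the picture is corrected, the paper's argument is short and avoids trigonometry. One foliates $\H^2$ by the curves $c_t$ of constant curvature $\kappa$ orthogonal to $\gamma'$, normalized so that $d(c_t,c_{t'})=|t-t'|$, $c=c_{-d(c,\gamma)}$, and $c_0$ is the leaf tangent to $\gamma$. Up to isometry the configuration $(\gamma,\gamma',p,\gamma_1,\gamma_2)$ is unique, so the leaf $c_{d_\kappa}$ tangent to $\gamma_1\cup\gamma_2$ from the convex side has index $d_\kappa$ depending only on $\kappa$. Since $\gamma_1\cup\gamma_2$ lies beyond $c_{d_\kappa}$ as seen from $c$, one gets $d(c,\gamma_1\cup\gamma_2)\ge d(c,c_{d_\kappa})=d(c,\gamma)+d_\kappa$. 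No first-order analysis at $p$ is needed.
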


\begin{proof}
Let us in fact consider the foliation of $\H^2$ by curves $\{c_t\}_{t=\in\R}$ of constant
curvature $\kappa$ and orthogonal to $\gamma'$ such that $d(c_t,c_{t'})=|t-t'|$,
$c=c_{-d(c,\gamma)}$ and $c_o$ is tangent to $\gamma$. There is some $d_\kappa>0$ such
that $c_{d_\kappa}$ is tangent to $\gamma_1$ and $\gamma_2$ and contained in the quarter
space bouded by $\gamma_1$ and $\gamma_2$ (notice that $d_k$ only depends on $\kappa$). We
then have
$$
d(c,\gamma_1\cup\gamma_2)\ge d(c,c_{d_\kappa})= d(c,\gamma)+d_\kappa.
$$
\end{proof}
\begin{figure}[h]
\begin{center}
\resizebox{0.6\linewidth}{!}{\input{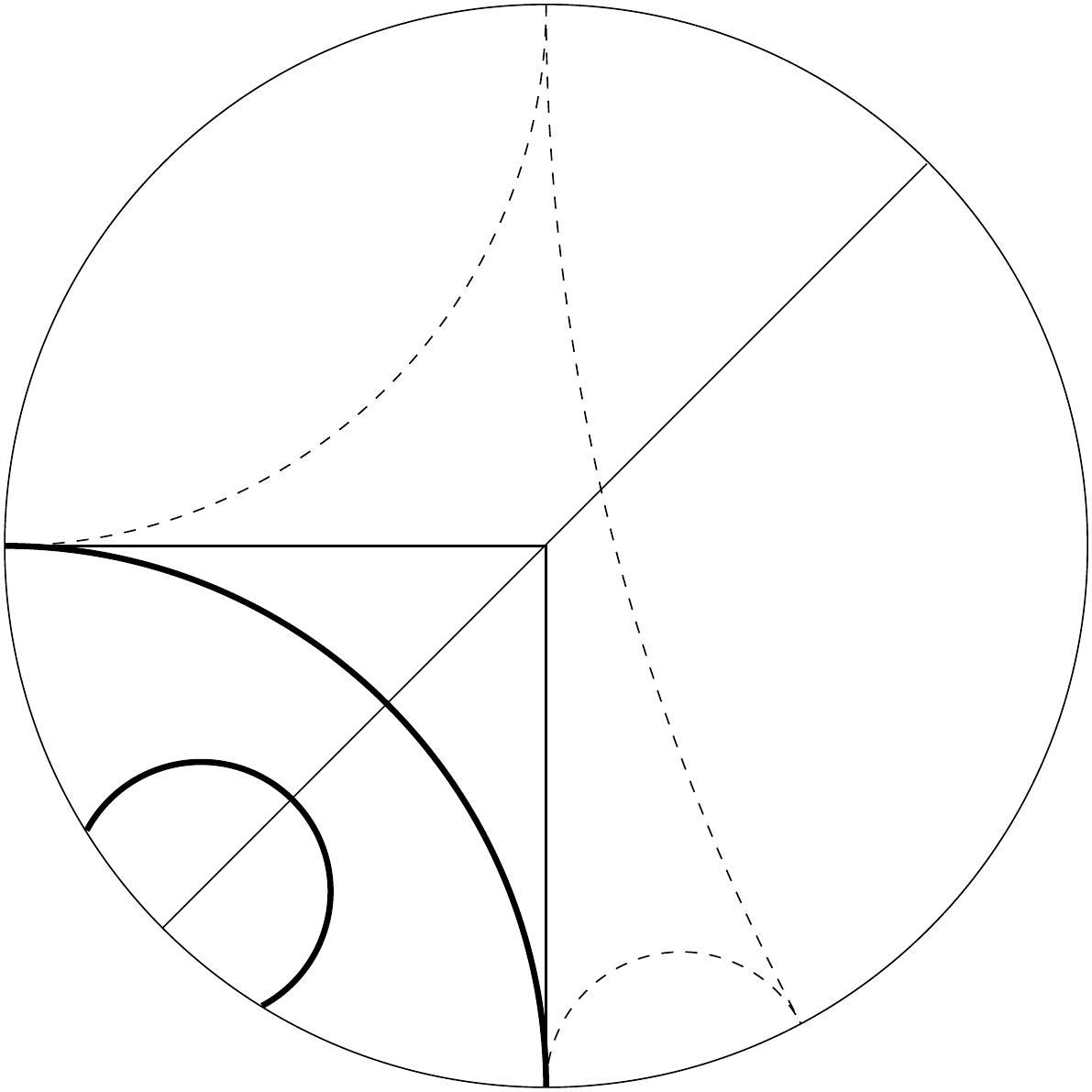_t}}
\caption{The geodesic rays $\gamma_1$ and $\gamma_2$ with $R_t'$ drawn \label{fig:dist}}
\end{center}
\end{figure}


\end{document}